\newcommand{\R}{\mathbb{R}}
\newcommand{\Z}{\mathbb{Z}}
\newcommand{\kk}{\kappa}
\newcommand{\cM}{\mathcal{M}}
\newcommand{\cA}{\mathcal{A}}
\newcommand{\cB}{\mathcal{B}}
\newcommand{\cC}{\mathcal{C}}
\newcommand{\sigmaV}{\sigma^{\scst V}}
\newcommand{\sigmaf}{\sigma^{f}}
\newcommand{\sigmainf}{\sigma^{\infty}}
\newcommand{\sigmaO}{\sigma^{\scst O}}
\newcommand{\dM}{d^{\scst M}}
\newcommand{\dB}{d_{\scst B}}
\newcommand{\dI}{d_{\scst I}}
\newcommand{\gammaZ}{\gamma_{\scst Z}}
\newcommand{\gammaZp}{\gamma_{{\scst Z}'}}
\newcommand{\fdp}{f^{\delta\,'}}
\newcommand{\Ydp}{Y^{\delta\,'}}
\newcommand{\piYd}{\alpha^{\scst Y\delta}}
\newcommand{\piYdp}{\alpha^{{\scst Y\delta}\,'}}
\newcommand{\dYdp}{d^{{\scst Y\delta}\,'}}
\newcommand{\fbullet}{f_\bullet}
\newcommand{\scst}{\scriptscriptstyle}
\newcommand{\Sf}{S^{\scst D(f_0)}}
\newcommand{\mf}{m^{\scst D(f_0)}}
\newcommand{\Sff}{S^{\scst D(f_0')}}
\newcommand{\mff}{m^{\scst D(f_0')}}
\newcommand{\dHM}{d^{\scst M}_{\scst H}}
\newcommand{\kkI}{\kappa_{\scst I}}
\DeclareMathOperator{\B}{B}
\DeclareMathOperator{\Ho}{H}
\DeclareMathOperator{\PH}{PH}
\DeclareMathOperator{\VR}{VR}
\DeclareMathOperator{\im}{im}
\DeclareMathOperator{\coim}{coim}
\DeclareMathOperator{\coker}{coker}
\DeclareMathOperator{\Rep}{Rep}
\newcommand{\veps}{\varepsilon}
\newcommand{\rhoV}{\rho^{\scst V}}
\newcommand{\rhoU}{\rho^{\scst U}}
\newcommand{\rhoVveps}{\rho^{\scst V}_{\veps}}
\newcommand{\Vveps}{V(\veps)}
\newcommand{\Uveps}{U(\veps)}
\newcommand{\fveps}{f_\veps}
\begin{document}

\markboth{Á. Torras-Casas,  R. Gonzalez-Diaz}
{Properties and Stability of Persistence Matching Diagrams}

%
\catchline{}{}{}{}{}
%

\title{Properties and Stability of Persistence Matching Diagrams }

\author{Álvaro Torras-Casas and Rocio Gonzalez-Diaz}

\address{
    Matemática Aplicada I,  Universidad de Sevilla, \\ 
    Avenida Reina Mercedes S/N, Sevilla, 41012,
    Spain \\
    $\{$atorras,rogodi$\}$@us.es
}

\maketitle


\begin{abstract}
We introduce persistence matching diagrams induced by set mappings of metric spaces, based on 0-persistent homology of Vietoris-Rips filtrations. 
Also, we present a geometric definition of the persistence matching diagram that is more intuitive than the algebraic one.
In addition, we show that such matching diagrams encapsulate the information of persistence morphism images, kernels and cokernels induced by embeddings.
The main result is a stability theorem for persistence matching diagrams induced by embeddings.
At the end, we adapt our stability result for set injections (not embeddings) of finite metric spaces.
\end{abstract}

\keywords{persistence modules; induced partial matchings; persistence matching diagrams}

\ccode{Mathematics Subject Classification 2020: 55N31
}


\section{Introduction}

Our study starts with the case of finite metric spaces $X$ and $Z$ together with an embedding $X\subseteq Z$. 

Given a pair of finite metric spaces $X\subseteq Z$, the usual procedure to date computes the persistent homologies $\PH_*(X)$ and $\PH_*(Z)$ obtained from the respective Vietoris-Rips filtration and compares their interval decompositions (also called barcodes), $\B(\PH_*(X))$ and $\B(\PH_*(Z))$, by some distance, such as the bottleneck or Wasserstein distance (see, for example, \cite{carlsson,edelsbrunner,oudot}).
However, such comparisons might differ substantially from the underlying distribution of the metric spaces $X$ and $Z$, since both distances are based only on combinatorial comparisons between the interval decompositions of $\PH_*(X)$ and $\PH_*(Z)$.

To also consider the information of the morphism 
$f\colon\PH_*(X)\rightarrow \PH_*(Z)$ induced by the inclusion $X\subseteq Z$ in the comparison, we propose to use the block function $\cM_f$ induced by the morphism $f$, introduced in~\cite{matchings}. 
Since this paper focuses on 0-persistent homology,  the block function will be denoted as $\cM^0_f$ to highlight this fact, where $f\colon V\to U$, being $V=\PH_0(X)$ and $U=\PH_0(Z)$.
We show that $\cM^0_f$, $V$ and $U$
encode the same information as the persistence modules $\im(f)$, $\ker(f)$ and $\coker(f)$ associated with the morphism $f$.
However, the advantage of using $\cM^0_f$ is that we can use it to define a single diagram $D(f_0)$ that combines all the information from $\B(V)$, $B(U)$, $\B(\im(f))$, $\B(\ker(f))$ and $\B(\coker(f))$. 
Besides, we show that $\cM^0_f$  is a well-behaved invariant and stable under small perturbations and that 
redundant information of $X$ and $Z$ are concentrated in $D(f_0)$ around the origin. An additional advantage of $\cM^0_f$ studied in the paper is that $\cM^0_f$ can be computed in situations when there is no underlying persistence morphism, but only a set mapping $X\to Z$.

We start by introducing $\cM^0_f$, the block function between barcodes induced by a set mapping of finite metric spaces, in Section~\ref{sec:matchings}, where we include examples.
In Section~\ref{sec:geometric-interpretation}, we give a geometric interpretation of  
 $\cM^0_f$.
Later, in Section~\ref{sec:properties-stability}, we study properties of $\cM^0_f$. 
In Section~\ref{sec:0-diagrams}, we study the stability for the matching diagram
$D(f_0)$ which, by definition, is equivalent to the stability for $\cM^0_f$.
Section~\ref{sec:proofs} is devoted to some proofs of results provided in the paper,
for being rather technical.   
Section~\ref{sec:future} ends the paper with conclusions and future work. The main notations used in the paper can be consulted in Table~\ref{tab:notation}.

\begin{table}
\centering
\begin{tabular}{|c|l|}
\hline
\textbf{Notation} & \textbf{Description}
\\ \hline
$X,Z$ & finite metric spaces 
\\ \hline 
\multirow{2}{*}{$\VR(X), \VR(Z)$} & 1-skeletons of the Vietoris-Rips filtrations
\\  
&
\;\;\;  associated to $X,\, Z$
\\ \hline 
 $V_0,\, U_0$ &  $H_0(\VR(X)),\, H_0(\VR(Z))$, vector spaces  spanned by $X,\, Z$
 \\ \hline 
\multirow{2}{*}{$V, U$} & $\PH_0(X), \PH_0(Z)$, persistence modules
\\ 
&
\;\;\; induced by $\VR(X), \VR(Z)$
\\ \hline 
$\B(V),\B(U)$ & barcodes of $V,\, U$
\\ \hline 
$(S^V,m^V), (S^U,m^U)$ & 
pairs of set and multiplicity of $\B(V)$ and $\B(U)$
\\ \hline 
$X\subseteq Z$ & embedding of  metric spaces
\\ \hline 
$X\hookrightarrow Z$ & set injection
\\ \hline 
$\fbullet\colon X\to Z$ & non-expansive map (or just a set mapping)
\\ \hline 
\multirow{2}{*}{$f_0\colon V_0\to U_0$} & linear map\\
&
\;\;\; induced by a set mapping $\fbullet\colon X\to Z$
\\ \hline 
\multirow{2}{*}{$f\colon V\to U$} & persistence morphism 
\\  
&
\;\;\; induced by a non-expansive map $\fbullet\colon X\to Z$
\\ \hline 
$\cM_f^0$ & block function induced by $f_0$
\\  \hline 
$D(f_0)$ & persistence matching diagram induced by $f_0$ 
\\ \hline
\end{tabular}
\caption{Main notations used in the paper.}
\label{tab:notation}
\end{table}


\section{Block functions between barcodes induced by set mappings
}
\label{sec:matchings}

Consider a pair of finite metric spaces $X$ and $Z$ and an embedding $X\subseteq Z$. 
Thus, there is an inclusion of Vietoris-Rips filtrations $\VR(X)\subseteq \VR(Z)$.
Fixing a ground field, a morphism between the persistent homology of the Vietoris-Rips filtrations can be obtained, and the codification of this information in barcodes and block functions encapsulates rich topological information derived from the embedding $X\subseteq Z$.
Specifically, in this work, we are only interested in the 0-persistent homology, and so, without loss of generality, we only work with the 1-skeleton  of Vietoris-Rips filtrations (that we also denote as $\VR$ to unload the notation) and fix $\Z_2$ as the ground field. 
Let us denote by $\PH_0(X)$ and $\PH_0(Z)$ the 0-persistent homology over $\Z_2$ of $\VR(X)$ and $\VR(Z)$  respectively.
In this section, we explain how to compute a block function $\cM^0_f$ from the barcode $\B(V)$ to the barcode $\B(U)$, where $U=\PH_0(X)$ and $V=\PH_0(Z)$.


\subsection{Background}

First, we need to introduce some technical background from the computational topology field.
All the information can be found, for example,  in \cite{edelsbrunner, OudotBook2015} and in \cite{polterovich2020topological} where applications to metric geometry and function theory are also included.


\subsubsection{Finite metric spaces and the Gromov-Hausdorff distance}
\label{subsec:GH-dist}

Let $M$ be a finite metric space with metric $d^{\scst M}$.  
Given a pair of subsets $A,B\subseteq M$, we define their pairwise distance as the quantity 
\[
d^{\scst M}(A,B) = \min\big\{d^{\scst M}(a,b) \mbox{ for all } a \in A \mbox{ and } b \in B\big\}\,.
\] 
When considering a pair $A \subseteq M$ and a point $x \in M$, we use the notation $d^{\scst M}(x, A)$ to refer to the expression $d^{\scst M}(\{x\}, A)$.

A disadvantage of the definition of distance between subsets is that it is not effective in distinguishing potentially very different subsets.
If $A\cap B\neq \emptyset$ then $d^{\scst M}(A,B)=0$. 
For this purpose, it is more useful to consider the \emph{Hausdorff} distance
\[
d^{\scst M}_{\scst H}(A,B) = \max\big\{\max\big\{ 
d^{\scst M}(a, B) \, \vert \,
a \in A
\big\}, \max\big\{ 
d^{\scst M}(A, b) \, \vert \,
b \in B
\big\}\big\}\,,
\]
since $d^{\scst M}_{\scst H}(A,B)=0$ if and only if $A=B$.

Given a finite metric space $Z$, an isometric embedding of $Z$ into $M$ is denoted by  $\gamma_{\scst Z}\colon Z\hookrightarrow M$.
Now, consider two finite metric spaces, $X$ and $X'$. 
To compare $X$ and $X'$, we might consider  
isometric embeddings of both objects into metric spaces of $M$. 
This leads to the following definition.

\begin{definition}\label{def:gromov-hausdorff-pairs}
    The \emph{Gromov-Hausdorff} distance between $X$ and $X'$ is 
    \[
    d_{\scst GH}(X,X') = \inf_{M, \gamma_{\scst X}, \gamma_{\scst X'}}\big\{ d^{\scst M}_{\scst H}(\gamma_{\scst X}(X), \gamma_{\scst X'}(X'))\big\}\,.
    \]
Further,  consider pairs $(X,Z)$ and $(X',Z')$ where $Z$ and $Z'$ are a couple of metric spaces and where $X \subseteq Z$ and $X' \subseteq Z'$.
We define the 
Gromov-Hausdorff distance 
$d_{\scst GH}((X,Z),(X',Z'))$ between the pairs $(X,Z)$ and $(X',Z')$, as the quantity
\[
\inf_{M, \gamma_{\scst Z}, \gamma_{\scst Z'}}\big\{ \max\big\{ 
d^{\scst M}_{\scst H}(\gamma_{\scst Z}(X), \gamma_{\scst Z'}(X')),
d^{\scst M}_{\scst H}(\gamma_{\scst Z}(Z), \gamma_{\scst Z'}(Z'))
\big\}\big\}\,.
\]
\end{definition}

The Gromov-Hausdorff distance for pairs measures how far two pairs of metric spaces are from being isometric, i.e., from having the same geometric structure. 
This concept is used in Subsection~\ref{subsec:stability} to prove the stability of the 
block function $\cM_f^0$.


\subsubsection{
Persistence modules, intervals and persistent homology in dimension 0}\label{subsec:pershom}

A \emph{persistence module} $V$ indexed by $\R$ consists of a set of vector spaces $\big\{V_t\big\}_{t\in\R}$ and a set of linear maps 
$\big\{\rhoV_{st}\colon V_s \rightarrow V_t\big\}_{s\leq t}$, called the \emph{structure maps} of $V$,  satisfying that $\rhoV_{jt}\rhoV_{ij} = \rhoV_{it}$ and $\rhoV_{tt}$ being the identity map, for $i\leq j\leq t\in\R$.
Given an interval $I=[a,b)\subset \R$,
the \emph{interval module}, $\kkI$, is a particular case of persistence module consisting of $(\kkI)_t= \Z_2$ for all $t \in I$ and $(\kkI)_t= 0$ otherwise, while the linear map $\rho^{\kk_{\scst I}}_{ij}\colon (\kkI)_i\to (\kkI)_j$ is the identity map whenever $a\leq i\leq j< b$. 

In this work, for most cases, we consider only interval modules such that $a=0$.
Thus, for ease, we often denote an interval module $\kk_{[0,b)}$ by the simpler notation $\kk_{b}$ for all $b>0$.
Also, we denote by  $\kk_{\infty}$  
the persistence module that consists of 
$(\kk_{\infty})_t= \Z_2$ for all $t\geq 0$ and 
$(\kk_{\infty})_t=0$ otherwise; and $\rho^{\kk_\infty}_{ij}\colon(\kk_{\infty})_i\to (\kk_{\infty})_j$ being the identity map whenever $0\leq i\leq j$ and the zero map otherwise.

Let $Z$ be a metric space with metric $d^Z$. 
We consider the filtered graph $\VR(Z)$, the 
1-skeleton of the Vietoris-Rips complex, which is a family of graphs $\{\VR_r(Z)\}_{r\in[0,\infty)}$.
Given $r\in [0,\infty)$ the graph $\VR_r(Z)$ contains the edges $[x,y]$ such that $d^Z(x,y)\leq r$. Hence, given $r\leq r'$ we have an injection $\VR_r(Z)\hookrightarrow \VR_{r'}(Z)$.

Fixed $r\geq 0$, the \emph{0-homology group} of $\VR_r(Z)$ is a quotient group, denoted as  $\Ho_0(\VR_r(Z))$, that consists of the vector space generated by the classes associated with the connected components of $\VR_r(Z)$.
Specifically, we define $\Ho_0(\VR_r(Z))=\big\langle \pi_0(\VR_r(Z))\big\rangle$, the free $\Z_2$-vector space generated by the set $\pi_0(\VR_r(Z))$ of connected components of $\VR_r(Z)$. 

The \emph{0-persistent homology} of $\VR(Z)$, denoted as $U=\PH_0(Z)$,
is a persistence module given by the set $\big\{
U_r=\Ho_0(\VR_r(Z))
\big\}_{r\in[0,\infty)}$ 
of 0-homology groups and the set of linear maps 
$
\big\{
\rhoU_{rs}\colon U_r \to U_s
\big\}_{r\leq s}
$
that are induced by the inclusion maps
$\VR_r(Z)\xhookrightarrow{} \VR_s(Z)$
for $r\leq s$.
Intuitively, $\PH_0(Z)$ encapsulates the evolution of connected components in $\VR(Z)$. 
These components start being the isolated points of the whole dataset $Z$ and, as the filtration parameter increases, $\PH_0(Z)$ records the death values of such components. 

Given the persistence module $U=\PH_0(Z)$ with structure maps $\rho^{\scst U}$, 
the operators $\ker^\pm_{b}$ are
defined as follows, for all $b> 0$,
\[
\ker^-_b(U
)= \bigcup_{0 \leq r < b} \ker(\rho^{\scst U}_{0r})
\;\;\mbox{ and }\;\; 
\ker^+_b(U)
= \ker(\rho^{\scst U}_{0b})
\,,
\]
are subspaces of $U_0$ and encapsulate the classes in $\PH_0(Z)$ that die at $r$ for $r<b$ and  $r\leq b$, respectively.


\subsubsection{Multisets, p.f.d. persistence modules and barcodes}

It is known that, in general, a persistence module $V$ has a unique decomposition as a direct sum of interval modules (see~\cite{crawley-boevey-2015}). 
The interval modules are in bijection with intervals over $\R$, so 
$V$ is uniquely characterized by a \emph{multiset} called barcode.

Recall that a multiset is a pair $(S,m)$ composed of a set $S$ together with an assignment $m\colon S\rightarrow \Z_{>0} \cup \{\infty\}
$ that maps elements from $S$ to their multiplicity. 
The representation of a multiset $(S,m)$ is the set 
\[\Rep\, (S,m) = \big\{ \; (s,\ell) \;\vert\; s \in S
\mbox{ and } 
\ell\in \llbracket m(s)\rrbracket\; \big\}\,.\]
where $\llbracket n \rrbracket = \{1,2,\ldots, n\}$ for all integers $n>0$.

A persistence module $U$ is said to be \emph{pointwise finite dimensional} (p.f.d) whenever $\dim(U_r)<\infty$ for all $r \in \R$.
In this work, all persistence modules are p.f.d. since they are either of the form $\PH_0(Z)$ for a finite metric space $Z$ or they are $\im(f), \ker(f)$ or $\coker(f)$ for a persistence morphism $f$ between persistence modules (see Subsection~\ref{subsec:persistence-morphism} and Subsection~\ref{subsec:im-ker-coker-intro}). 

Given a persistence module $U$, there is a multiset $\B(U)=(S^{\scst U},m^{\scst U})$, 
where $S^{\scst U}$ is a set of intervals from $\R$ together with an assignment called \emph{multiplicity},
$m^{\scst U}\colon S^{\scst U}\rightarrow \Z_{\geq 0}$, satisfying that there is
an isomorphism 
\[
\mbox{$
U\;
\simeq\; \bigg(\bigoplus_{\scst  (J,\ell) \in \Rep\B(U)}  \kk_{\scst J}\bigg) \oplus \kk_\infty
\;=\;
\bigg(\bigoplus_{\scst J \in S^{\scst U}} 
\bigoplus_{\scst \ell\in \llbracket m^{\scst U}\!(J)\rrbracket}
\kk_{\scst J}\bigg)\oplus \kk_\infty$.}
\]
Besides, if $U$ is the 
0-persistent homology of the Vietoris-Rips filtration of a finite metric space $Z$,  
its barcode, $\B(U)=(S^{\scst U},m^{\scst U})$, is such that all intervals from $S^{\scst U}$ are of the form $[0,b)$ for values $b>0$. 
This is why we might consider $S^{\scst U}$ as a subset of $\R$, where an interval $[0,b)$ is substituted by its endpoint $b$. 
Notice that, in this paper, we do not consider the infinity interval $[0,\infty)$ as an element from $\B(U)$.
Similarly as before, we also write $m^{\scst U}\!(b)$ and $\kk_b$ instead of $m^{\scst U}\!(J)$ and $\kk_{\scst J}$ without ambiguities.
Besides, since $Z$ is finite, we have that $m^{\scst U}\!(b)<\infty$.
Then, 
\[\mbox{$
U\;\simeq\; 
\bigg(\bigoplus_{\scst b>0} 
\bigoplus_{\scst \ell \in \llbracket m^{\scst U}\!(b) \rrbracket}
\kk_b \bigg) \oplus \kk_\infty
$}.
\]


\subsubsection{Persistence morphisms and non-expansive maps}\label{subsec:persistence-morphism}

A \emph{persistence morphism} $f\colon V\rightarrow U$ between persistence modules $V$ and $U$ is a set of linear maps $\big\{f_t\colon V_t\rightarrow U_t\big\}_{t \in \R}$ that commute with the structure maps 
$\rho^{\scst V}$ of $V$ and 
$\rho^{\scst U}$ of $U$. 
That is,
$\rho^{\scst U}_{st}  f_s = f_t \rho^{\scst V}_{st} $ for all $s\leq t$ in $\R$.

Now, consider a pair of finite metric spaces $X$ and $Z$ together with a \emph{non-expansive map} $\fbullet\colon X\rightarrow Z$; that is, there is an inequality 
$d_X(x,y)\geq d_Z(\fbullet(x),\fbullet(y))$ for all $x,y\in X$.
In such case, there is an induced graph morphism $\VR_r(X) \rightarrow \VR_r(Z)$ for all $r\geq 0$. 
In particular, there is an induced persistence morphism $f\colon V\to U$ between   $V=\PH_0(X)$ and $U=\PH_0(Z)$.
Since all intervals of the barcodes $\B(V)$ and $\B(U)$ start at $0$, 
there are no nested intervals.
Hence, we might adapt Theorem~5.3 from~\cite{Jacquard2023} as follows. 

\begin{remark}\label{rem:ulrike}
    Given a    
    persistence morphism $f\colon V\to U$ induced by a non-expansive map $X\to Z$, there exist integers 
    $r^b_a\geq 0$ and $d^+_a,d^-_b\geq 0$ such that
    \[\begin{array}{rr}
    f \simeq  &
    \mbox{$\bigg(
        \bigoplus_{ b > 0} \bigoplus_{a\geq b} 
        \bigoplus_{ i\in \llbracket r^b_a\rrbracket} 
        (\kk_{a} \rightarrow \kk_{b})\bigg) 
        \oplus 
        \bigg(\bigoplus_{b>0} 
        \bigoplus_{j\in\llbracket d^-_b\rrbracket}
        (0 \rightarrow \kk_{b})\bigg)
        $}
    \\
    &
    \mbox{$\oplus 
        \bigg(\bigoplus_{a>0} \bigoplus_{j\in\llbracket d^+_a\rrbracket} 
        (\kk_{a} \rightarrow 0)\bigg) \oplus \big(\kk_{\infty}\rightarrow \kk_{\infty}\big)$.}
    \end{array}
    \]
\end{remark}
Here the persistence morphism $ \kk_{a} \rightarrow \kk_{b}$ satisfies that $ (\kk_{a})_{r} \rightarrow (\kk_{b})_{r}$
is the identity map $\Z_2 \rightarrow \Z_2$ for all $0\leq r < b$ and it is  the null map otherwise.


\subsubsection{Bottleneck distance}

Given a pair of multisets $(S, m)$ and $(S', m')$, a \emph{partial matching} 
\[
\sigma\colon\Rep\, (S,m) \nrightarrow \Rep\, (S', m')
\]
is a bijection $T\rightarrow T'$ for two subsets $T\subseteq \Rep\,(S, m)$ and $T'\subseteq \Rep\,(S', m')$.
For ease, we use the notation $\coim(\sigma)=T$ and $\im(\sigma)=T'$.

Now, consider two barcodes $\B(V)$ and $\B(V')$ for two persistence modules $V$ and $V'$. 
A \emph{$\veps$-matching} between $\B(V)$ and $\B(V')$ is a partial matching $\sigmaV\colon \Rep\B(V)\nrightarrow \Rep\B(V')$ such that:
\begin{itemize}
    \item[1)] all $([a,b), i) \in \Rep \B(V)\setminus \coim(\sigmaV)$ are such that $|b-a|<\veps$,
    \item[2)] all $([a,b), i) \in \coim(\sigmaV)$ are such that $\sigmaV(([a,b), i)) = ([a',b'),j)$ with $|a-a'|<\veps$ and $|b-b'|<\veps$
    \item[3)] all $([a',b'),j) \in \B(V') \setminus \im(\sigmaV)$ are such that $|a'-b'| < \veps$.
\end{itemize}
This leads us the definition of the \emph{bottleneck distance} between $\B(V)$ and $\B(V')$, given by 
\[
\dB(\B(V), \B(V')) = \inf 
\big\{\veps 
 \, |\,
\exists \mbox{ }\veps\mbox{-matching
between } \B(V) \mbox{ and } \B(V')
\big\}.
\]


\subsubsection{Interleaving distance and stability of barcodes
}

    To start, we recall the definition of interleaving distance between two persistence modules.
    Given a persistence module $V$ with structure maps $\rhoV$ and given $\veps\geq 0$, we denote by $\Vveps$ the persistence module such that $\Vveps_r = V_{r+\veps}$ for all $r \in \R$ and with structure maps $\rho^{\scst V(\veps)}_{rs}=\rhoV_{r+\veps, s+\veps}$ for all $r\leq s$.
We denote by $\rhoVveps$ the persistence morphism $\rhoVveps\colon V\rightarrow \Vveps$ given by $(\rhoVveps)_r = \rhoV_{r(r+\veps)}$ for all $r\geq 0$.
Also, given a persistence morphism $f\colon V\rightarrow U$, we denote by $\fveps$ the persistence morphism  $\fveps\colon\Vveps\to \Uveps$ such that $(\fveps)_r = f_{r+\veps}$ for all $r\in \R$.
   
    We say that two persistence modules $V$ and $U$
      are $\veps$-\emph{interleaved} if there exist 
    persistence morphisms, $\phi\colon V\rightarrow \Uveps$ and $\psi\colon U\rightarrow \Vveps$, such that  $\psi_{\veps}\circ \phi = \rho^{\scst V}_{2\veps}$ and also $\phi_{\veps}\circ \psi = \rho^{\scst U}_{2\veps}$.
    The \emph{interleaving distance} between $V$ and $U$ is defined as follows 
    \[
    \dI(V, U) = \inf\big\{ \,\veps\geq 0 \ \big\vert\ V \mbox{ and } U \mbox{ are } \veps\mbox{-interleaved}\,\big\}\,.
    \]
    The following result is adapted from Proposition~7.8 from~\cite{OudotBook2015}.

    \begin{proposition}\label{prop:interleaving-Hausdorff-stability}
    Let $X$ and $X'$ be two finite metric spaces. 
    Let $V=\PH_0(X)$ and $V'=\PH_0(X')$. 
    Then,
    $
    \dI(V, V')\leq 2d_{\scst GH}(X, X')
    $.
    \end{proposition}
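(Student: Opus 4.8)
The plan is to reduce the statement to the classical stability of Vietoris--Rips persistent homology, tracking the factor of $2$ that appears because the Gromov--Hausdorff distance is defined here via embeddings into a common metric space $M$ rather than via correspondences. First I would recall the bridge between the two: if $d_{\scst GH}(X,X') < \eta$, then by Definition~\ref{def:gromov-hausdorff-pairs} there is a finite metric space $M$ and isometric embeddings $\gammaZ\colon X \hookrightarrow M$, $\gammaZp\colon X' \hookrightarrow M$ with $\dHM(\gammaZ(X), \gammaZp(X')) < \eta$. Writing $A = \gammaZ(X)$ and $A' = \gammaZp(X')$, the Hausdorff bound means every point of $A$ lies within $\eta$ of some point of $A'$ and vice versa; moreover the isometric embeddings give $\PH_0(X) \simeq \PH_0(A)$ and $\PH_0(X') \simeq \PH_0(A')$, so it suffices to prove $\dI(\PH_0(A), \PH_0(A')) \le 2\eta$ for two finite subsets $A, A' \subseteq M$ with $\dHM(A,A') < \eta$, and then let $\eta \to d_{\scst GH}(X,X')$.

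Next I would construct the interleaving explicitly at the level of filtered graphs. Choose set maps $g\colon A \to A'$ and $h\colon A' \to A$ with $d^{\scst M}(a, g(a)) < \eta$ for all $a \in A$ and $d^{\scst M}(a', h(a')) < \eta$ for all $a' \in A'$ (these exist by the Hausdorff bound, picking nearest points). For any edge $[a, b] \in \VR_r(A)$, i.e. $d^{\scst M}(a,b) \le r$, the triangle inequality gives $d^{\scst M}(g(a), g(b)) \le d^{\scst M}(g(a),a) + d^{\scst M}(a,b) + d^{\scst M}(b,g(b)) < r + 2\eta$, so $g$ induces a graph morphism $\VR_r(A) \to \VR_{r+2\eta}(A')$, hence a persistence morphism $\phi\colon \PH_0(A) \to \PH_0(A')(2\eta)$ on $0$-homology; symmetrically $h$ induces $\psi\colon \PH_0(A') \to \PH_0(A)(2\eta)$. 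One then checks the two interleaving triangles: $\psi_{2\eta} \circ \phi$ and $\rho^{\scst{PH_0(A)}}_{4\eta}$ are both induced by maps $\VR_r(A) \to \VR_{r+4\eta}(A)$ --- the first by $h \circ g$, the second by the inclusion --- and on $\pi_0$ these agree because $d^{\scst M}(a, h(g(a))) < 2\eta \le 4\eta$ means $a$ and $h(g(a))$ lie in the same connected component of $\VR_{4\eta}(A)$ (in fact of $\VR_{2\eta}(A)$), so they represent the same class in $\Ho_0$; the same argument handles $\phi_{2\eta} \circ \psi$. This shows $\PH_0(A)$ and $\PH_0(A')$ are $2\eta$-interleaved, giving $\dI(V,V') \le 2\eta$, and taking the infimum over $\eta > d_{\scst GH}(X,X')$ yields $\dI(V,V') \le 2 d_{\scst GH}(X,X')$.

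The main obstacle --- really the only subtle point --- is verifying that the two composites in each interleaving triangle genuinely agree as persistence morphisms, not merely that they fit into a commuting square of graphs up to homotopy. The clean way to see this is that a graph morphism $\VR_s(A) \to \VR_t(A)$ induces on $\Ho_0$ the linear map sending the class of a vertex $a$ to the class of its image, and two such graph morphisms that send each vertex to vertices lying in a common connected component of the target induce the \emph{same} map on $\Ho_0$; applying this to $h \circ g$ versus the inclusion (whose "images" $h(g(a))$ and $a$ are joined by a path in $\VR_{2\eta}(A) \subseteq \VR_{4\eta}(A)$ since $d^{\scst M}(a,h(g(a))) < 2\eta$) closes the argument. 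Everything else is the triangle inequality and functoriality of $\Ho_0$, so the proof is short; alternatively, one may simply cite Proposition~7.8 of~\cite{OudotBook2015} and remark that the factor $2$ in front of $d_{\scst GH}$ comes from our embedding-based definition of $d_{\scst GH}$, since an $\eta$-Hausdorff pair in a common space yields an $\eta$-correspondence whose distortion is at most $2\eta$.
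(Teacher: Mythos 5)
Your proof is correct. Note, though, that the paper does not actually prove Proposition~\ref{prop:interleaving-Hausdorff-stability}: it simply states it as an adaptation of Proposition~7.8 of~\cite{OudotBook2015}, so there is no in-paper argument to compare against line by line. What you have written is a self-contained derivation, and it is essentially the same construction that the paper \emph{does} carry out in detail later for the relative version, Proposition~\ref{prop:interleaving-f-fprime}: there the authors pass to a common ambient space $M$, pick nearest-point assignments $\alpha^{\scst Z}, \alpha^{{\scst Z}'}$ realizing the Hausdorff bound, verify via the triangle inequality that these give graph morphisms $\VR_r \to \VR_{r+2\veps}$, and check the interleaving identities on $\pi_0$ exactly as you do with $h\circ g$ versus the identity. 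So your argument buys a self-contained proof of the absolute statement (making the factor of $2$ transparent as coming from the embedding-based definition of $d_{\scst GH}$), at the cost of duplicating machinery the paper defers to Subsection~\ref{subsec:proof-stability-ker-im-coker}. All the individual steps check out: the strict Hausdorff bound $<\eta$ guarantees the nearest-point maps exist, the triangle inequality gives the $2\eta$ shift on edges, and the observation that two vertex maps landing in a common connected component of $\VR_{r+4\eta}(A)$ induce the same map on $\Ho_0$ (because $d^{\scst M}(a,h(g(a)))<2\eta$) correctly closes the interleaving triangles; the final passage to the infimum over $\eta$ is also handled properly.
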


   Now we present another result known as the \emph{Isometry Theorem} which is Theorem~3.5
    from~\cite{BauerLesnick2015}.

    \begin{proposition}[\protect{\cite[Theorem~3.5]{BauerLesnick2015}}]\label{prop:isometry}
    Let $V$ and $V'$ be two 
    persistence modules. 
    Then,
    $
    \dB(\B(V), \B(V')) = \dI(V, V')
    $.
    \end{proposition}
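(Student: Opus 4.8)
The Isometry Theorem is an equality of two numbers, so the plan is to prove the two inequalities $\dI(V,V')\le\dB(\B(V),\B(V'))$ and $\dB(\B(V),\B(V'))\le\dI(V,V')$ separately. For the first, which is the formal one, I would take any $\veps$ admitting an $\veps$-matching $\sigma\colon\Rep\B(V)\nrightarrow\Rep\B(V')$ and use the interval decompositions $V\simeq\bigoplus\kk_{\scst I}$ and $V'\simeq\bigoplus\kk_{\scst J}$ indexed by $\Rep\B(V)$ and $\Rep\B(V')$ (these exist since the modules are p.f.d.). Each matched pair of intervals, whose left and right endpoints differ by less than $\veps$, gives two interval modules that are $\veps$-interleaved through the evident pair of ``shifted-identity'' morphisms; each interval that $\sigma$ leaves unmatched has length $<\veps$, so the corresponding interval module is $\veps$-interleaved with $0$. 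Taking the direct sum of all these blockwise interleavings along $\sigma$ produces morphisms $\phi\colon V\to V'(\veps)$ and $\psi\colon V'\to V(\veps)$ exhibiting an $\veps$-interleaving of $V$ and $V'$, since a direct sum of $\veps$-interleavings is an $\veps$-interleaving. Hence $\dI(V,V')\le\veps$, and the infimum over such $\veps$ gives the inequality.

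For the reverse inequality --- the algebraic stability theorem --- I would argue through induced matchings, following the strategy of the cited reference. The key tool is the \emph{Induced Matching Theorem}: every morphism $g\colon M\to N$ of p.f.d. persistence modules carries a canonical partial matching $\chi_g\colon\B(M)\nrightarrow\B(N)$ such that, when $g$ is injective, $\chi_g$ matches every interval of $\B(M)$, sending $[b,d)$ to some $[b',d')$ with $b'\le b$ and $d'\le d$; and when $g$ is surjective, $\chi_g$ meets every interval of $\B(N)$, the matched interval $[b,d)$ of $\B(M)$ satisfying $b\le b'$ and $d\ge d'$. One constructs $\chi_g$ from the epi--mono factorisation $M\twoheadrightarrow\im(g)\hookrightarrow N$; the technical heart is the monomorphism case, i.e.\ the assertion that a submodule inclusion can only move each barcode interval so that both of its endpoints decrease. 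I would prove this either by an induction on the barcode or by tracking the rank invariant of $M$ inside that of $N$. A second lemma I would need is that induced matchings are compatible enough with composition to compare $\chi_{g'}\circ\chi_g$ with $\chi_{g'\circ g}$.

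With these in hand, I would then fix an $\veps$-interleaving $\phi\colon V\to V'(\veps)$, $\psi\colon V'\to V(\veps)$, so that $\psiveps\circ\phi=\rho^{\scst V}_{2\veps}$ and $\phiveps\circ\psi=\rho^{\scst V'}_{2\veps}$. The structure morphism $\rho^{\scst V}_{2\veps}\colon V\to V(2\veps)$ has a completely explicit induced matching: it matches each interval of $\B(V)$ of length $>2\veps$ with its $2\veps$-translate in $\B(V(2\veps))$ and leaves the shorter intervals unmatched. Comparing this explicit matching with $\chi_{\psiveps}\circ\chi_\phi$ via the composition lemma forces $\chi_\phi$ to match every interval of $\B(V)$ of length $>2\veps$ with an interval of $\B(V')$ whose endpoints differ from the original ones by less than $\veps$; the symmetric argument with $V$ and $V'$ interchanged does the same for the long intervals of $\B(V')$, and the composition lemma again shows that the two one-sided matchings agree on the intervals that actually get matched, so they glue into a single partial matching $\sigma$. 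Discarding all intervals of length $\le2\veps$, $\sigma$ is an $\veps$-matching between $\B(V)$ and $\B(V')$, so $\dB(\B(V),\B(V'))\le\veps$; taking the infimum, $\dB\le\dI$, which together with the first inequality proves the statement.

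The main obstacle is clearly the Induced Matching Theorem, and within it the monomorphism case together with the verification that the matching can be made canonical with the stated one-sided endpoint control; once that is established, the remaining pieces --- the blockwise interleavings, the explicit description of the induced matching of $\rho^{\scst V}_{2\veps}$, and the bookkeeping that the two one-sided matchings fit together into a genuine $\veps$-matching --- are routine.
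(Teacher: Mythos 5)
The paper does not actually prove this proposition: it is imported as Theorem~3.5 of the cited reference, so there is no internal proof to compare against, and I can only judge your sketch on its own terms and against the strategy of that reference. Your first inequality, $\dI(V,V')\le\dB(\B(V),\B(V'))$, is fine and standard: blockwise $\veps$-interleavings along a given $\veps$-matching (with unmatched short intervals interleaved with $0$) assemble into an $\veps$-interleaving of the direct sums.

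The hard direction has a genuine gap, and it sits exactly where you lean hardest. Leaving aside that the Induced Matching Theorem itself is left unproven (it is essentially the entire content of this direction), the mechanism you propose for exploiting the interleaving --- a ``composition lemma'' letting you compare $\chi_{\psi_\veps}\circ\chi_{\phi}$ with $\chi_{\psi_\veps\circ\phi}=\chi_{\rho^{\scst V}_{2\veps}}$ --- is precisely what is not available: induced matchings are not functorial, and no compatibility of $\chi_{g'}\circ\chi_{g}$ with $\chi_{g'\circ g}$ strong enough to run your comparison holds in general. The cited proof avoids composing matchings altogether: from $\psi_{\veps}\circ\phi=\rho^{\scst V}_{2\veps}$ and $\phi_{\veps}\circ\psi=\rho^{\scst V'}_{2\veps}$ one deduces that $\ker(\phi)$ and $\coker(\phi)$ are $2\veps$-trivial, and the quantitative form of the Induced Matching Theorem --- endpoint shifts of the matched intervals controlled by the triviality of the kernel and cokernel of the \emph{single} morphism $\phi$ --- then shows directly that $\chi_{\phi}$ is an $\veps$-matching. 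Your argument could be repaired by switching to that kernel/cokernel-triviality formulation, but as written the key step would fail. A smaller imprecision: for a monomorphism the canonical matching preserves death times exactly, sending $[b,d)$ to $[b',d)$ with $b'\le b$ (dually, an epimorphism preserves births and can only lengthen the matched bar in the source); your ``both endpoints decrease'' is strictly weaker, and the exact preservation of one endpoint is what makes the endpoint bookkeeping close in the reference's argument.
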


    The following result is a consequence of combining Proposition~\ref{prop:interleaving-Hausdorff-stability} with Proposition~\ref{prop:isometry}.

\begin{proposition}\label{prop:bottleneck-Hausdorff-stability}
Let $X$ and $X'$ be two finite metric spaces. Let $V=\PH_0(X)$ and $V'=\PH_0(X')$. 
Then, $\dB(\B(V), \B(V'))\leq 2d_{\scst GH}(X, X')$.
\end{proposition}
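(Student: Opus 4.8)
The plan is to chain together the two results immediately preceding the statement. Specifically, Proposition~\ref{prop:interleaving-Hausdorff-stability} gives the bound $\dI(V,V')\leq 2d_{\scst GH}(X,X')$ for the persistence modules $V=\PH_0(X)$ and $V'=\PH_0(X')$, while the Isometry Theorem (Proposition~\ref{prop:isometry}) identifies the bottleneck distance between the barcodes with the interleaving distance between the modules, i.e. $\dB(\B(V),\B(V'))=\dI(V,V')$. Composing these two facts yields $\dB(\B(V),\B(V'))=\dI(V,V')\leq 2d_{\scst GH}(X,X')$, which is exactly the claimed inequality.

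The only point requiring a moment of care is that Proposition~\ref{prop:isometry} is stated for arbitrary persistence modules, so I would first note that it applies in particular to $V=\PH_0(X)$ and $V'=\PH_0(X')$. These modules are p.f.d.\ (being $0$-persistent homology of Vietoris–Rips filtrations of finite metric spaces, as already observed in the excerpt), so there is no subtlety about decomposability or well-definedness of the barcodes $\B(V)$ and $\B(V')$; the Isometry Theorem as quoted covers this case directly.

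So the proof is essentially two lines: apply Proposition~\ref{prop:isometry} to rewrite $\dB(\B(V),\B(V'))$ as $\dI(V,V')$, then apply Proposition~\ref{prop:interleaving-Hausdorff-stability} to bound $\dI(V,V')$ by $2d_{\scst GH}(X,X')$. There is no real obstacle here — the statement is explicitly flagged in the excerpt as ``a consequence of combining'' the two previous propositions, so the content of the proof is just the combination. If anything, the only thing worth writing down is the chain of (in)equalities itself, to make the logical order transparent.
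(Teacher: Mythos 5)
Your proof is correct and matches the paper exactly: the paper itself presents this proposition as an immediate consequence of combining Proposition~\ref{prop:interleaving-Hausdorff-stability} with the Isometry Theorem (Proposition~\ref{prop:isometry}), which is precisely the two-step chain you describe. No issues.
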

   

\subsubsection{Images, kernels and cokernels}\label{subsec:im-ker-coker-intro}

Let $f\colon V\rightarrow U$ be a 
persistence 
morphism.
 We consider the image  persistence module, $\im(f)$,
which is given by $\im(f)_r=\im(f_r)$ for all $r \in \R$ and with structure maps $\rho^{\im(f)}$ being the restriction of $\rho^{\scst U}$. 
That is, $\rho^{\im(f)}_{rs}=\rho^{\scst U}_{rs|\im(f_r)}$ for all $r\leq s$. Similarly, the kernel persistence module, $\ker(f)$, is given by $\ker(f)_r=\ker(f_r)$ for all $r \in \R$ and the structure maps $\rho^{\ker(f)}$ are given by restricting $\rho^{\scst V}$. 
Finally, the cokernel persistence module,
$\coker(f)$, is given by $\coker(f)_r=\coker(f_r)$ for all $r \in \R$ and $\rho^{\coker(f)}$ are induced by the structure maps $\rho^{\scst U}$ in the quotient. 
We might put all these objects in a commutative diagram for $r\leq s$:
\[
\begin{tikzcd}
\ker(f_r)   \ar[r, hookrightarrow]  \ar[d, "\rho^{\ker(f)}_{rs}"]  &
V_r         \ar[r, twoheadrightarrow]  \ar[d, "\rho^{\scst V}_{rs}"]  
\ar[rr, bend left=15, "f_r"] &
\im(f_r)    \ar[r, hookrightarrow]  \ar[d, "\rho^{\im(f)}_{rs}"]  &
U_r         \ar[r, twoheadrightarrow]  \ar[d, "\rho^{\scst U}_{rs}"]  &
\coker(f_r)         \ar[d, "\rho^{\coker(f)}_{rs}"]  
\\
\ker(f_s)   \ar[r, hookrightarrow]  &
V_s         \ar[r, twoheadrightarrow]  
\ar[rr, bend right=15, "f_s", swap] &
\im(f_s)    \ar[r, hookrightarrow]  &
U_s         \ar[r, twoheadrightarrow]  &
\coker(f_s)         
\end{tikzcd}
\]
Image, kernel and cokernel persistence modules are known to be stable under noise perturbations. 
Here, we include a result stating this fact, which, for completeness, we prove in Subsection~\ref{subsec:proof-stability-ker-im-coker}.

\begin{proposition}\label{prop:bottleneck-ker-im-coker}
    Let $Z,Z'$ be a pair of finite metric spaces and consider a pair of subsets $X\subseteq Z$ and $X'\subseteq Z'$ such that $\veps=d_{\scst GH}((X,Z), (X',Z'))
    $. 
    Further, consider $f\colon\PH_0(X)\rightarrow \PH_0(Z)$ and $f'\colon\PH_0(X')\rightarrow \PH_0(Z')$. 
    Then, there are bounds to the bottleneck distances: 
    \begin{align*}
    \dB(\B(\ker(f)), \B(\ker(f'))) & \leq 2\veps, \\
    \dB(\B(\im(f)), \B(\im(f'))) & \leq 2\veps, \\
    \dB(\B(\coker(f)), \B(\coker(f'))) & \leq 2\veps.
    \end{align*}
\end{proposition}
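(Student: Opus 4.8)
The plan is to reduce the statement to the Isometry Theorem (Proposition~\ref{prop:isometry}) by exhibiting, for each of the three persistence modules $\ker(f)$, $\im(f)$, $\coker(f)$, an explicit $2\veps$-interleaving with its primed counterpart. First I would fix a metric space $M$ and isometric embeddings $\gammaZ\colon Z\hookrightarrow M$, $\gammaZp\colon Z'\hookrightarrow M$ realizing (up to an arbitrarily small $\eta>0$) the Gromov-Hausdorff distance between the pairs, so that both $\dHM(\gammaZ(X),\gammaZp(X'))\le \veps+\eta$ and $\dHM(\gammaZ(Z),\gammaZp(Z'))\le \veps+\eta$. Working inside $M$, the inclusions $X\subseteq Z$, $X'\subseteq Z'$ together with the ambient Hausdorff bounds give, for every $r$, graph inclusions $\VR_r(X)\hookrightarrow \VR_r(X')$ that are defined on the whole of $X$ after shifting by $\veps+\eta$ in the filtration parameter — more precisely, the standard "snapping to a nearest point" construction yields compatible maps $\VR_r(X)\to \VR_{r+\veps+\eta}(X')$ and $\VR_r(Z)\to \VR_{r+\veps+\eta}(Z)$ that commute (up to the usual $2(\veps+\eta)$ round-trip) with the corresponding maps in the other direction, and crucially the two snapping maps are compatible with the vertical inclusions $X\hookrightarrow Z$, $X'\hookrightarrow Z'$. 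Passing to $\PH_0$ gives an $(\veps+\eta)$-interleaving of the morphism $f$ with $f'$, i.e. persistence morphisms $\phi\colon V\to V'(\veps+\eta)$, $\psi\colon V'\to V(\veps+\eta)$, $\Phi\colon U\to U'(\veps+\eta)$, $\Psi\colon U'\to U(\veps+\eta)$ satisfying the interleaving identities and also $\Phi\circ f = f'(\veps+\eta)\circ\phi$ and $\Psi\circ f' = f(\veps+\eta)\circ\psi$ on the nose.

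The second step is to observe that such a "morphism interleaving" restricts and corestricts to the sub/quotient modules. Because $\phi$ and $\Phi$ intertwine $f$ and $f'$, the map $\phi$ sends $\ker(f_r)=\ker(f_r)$ into $\ker(f'_{r+\veps+\eta})$ and the map $\Phi$ descends to a well-defined map $\coker(f_r)\to\coker(f'_{r+\veps+\eta})$; moreover $\Phi$ carries $\im(f_r)\subseteq U_r$ into $\im(f'_{r+\veps+\eta})\subseteq U'_{r+\veps+\eta}$. The same holds for $\psi,\Psi$ in the other direction, and the interleaving identities $\psi_{\veps+\eta}\phi=\rho^V_{2(\veps+\eta)}$ etc.\ are inherited verbatim on the restricted/corestricted maps since they are just equalities of linear maps on subspaces or quotients of the $V_r$'s and $U_r$'s. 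This produces $2(\veps+\eta)$-interleavings of $\ker(f)$ with $\ker(f')$, of $\im(f)$ with $\im(f')$, and of $\coker(f)$ with $\coker(f')$, whence $\dI\le 2(\veps+\eta)$ for each pair; applying Proposition~\ref{prop:isometry} and letting $\eta\to 0$ gives the three bounds. (Strictly, one should note $\im(f),\ker(f),\coker(f)$ are p.f.d.\ so the Isometry Theorem applies, which was already recorded in the excerpt.)

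I expect the main obstacle to be the first step: producing the morphism interleaving — not merely a pair of module interleavings — with genuine, strictly commuting squares relating $f$ and $f'$. The subtlety is that the Hausdorff proximity gives a choice of nearest point, and one must choose these "snapping" assignments for $X$ and for $Z$ in a mutually compatible way so that a point of $X\subseteq Z$ snaps to a point of $X'\subseteq Z'$ under the same rule used for $Z\to Z'$; this is exactly why the pair Gromov-Hausdorff distance (Definition~\ref{def:gromov-hausdorff-pairs}), rather than the separate distances $d_{\scst GH}(X,X')$ and $d_{\scst GH}(Z,Z')$, is the right hypothesis. A clean way to finesse this is to work with the ambient $M$ and use the "offset" persistence modules $\PH_0$ of the $r$-neighborhoods of $\gammaZ(X)$, $\gammaZ(Z)$, etc., where the interleaving maps are literally induced by inclusions of subspaces of $M$ and hence automatically commute with everything in sight, then transfer back to Vietoris-Rips via the standard comparison for $\PH_0$ (nerve-type argument, or directly because $\PH_0$ only sees connected components and $0$-th offset homology agrees with $\VR$ $0$-homology up to the relevant shift). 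Once the commuting morphism interleaving is in hand, the rest is the formal sub/quotient bookkeeping of the second paragraph, which is routine.
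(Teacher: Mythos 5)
Your proposal is correct and follows essentially the same route as the paper: realize the pair Gromov--Hausdorff distance in an ambient $M$, build compatible ``snapping'' assignments $X\to X'$ and $Z\to Z'$ that strictly intertwine $f$ and $f'$, restrict/corestrict the resulting morphism interleaving to $\ker$, $\im$ and $\coker$, and finish with Proposition~\ref{prop:isometry}. The only blemish is a constant-tracking slip in the first step---snapping moves \emph{both} endpoints of an edge, so the induced maps are $\VR_r(X)\to\VR_{r+2(\veps+\eta)}(X')$ and the morphism interleaving is a $2(\veps+\eta)$-interleaving rather than an $(\veps+\eta)$-one---but this self-corrects, since you then (correctly) obtain $2(\veps+\eta)$-interleavings of the kernels, images and cokernels and hence the stated bound $2\veps$ after $\eta\to 0$.
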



\subsection{The induced block function ${\cM}^0_f$
}~\label{sec:block-function-computation}

The theoretical results supporting the process explained in this section can be consulted in \cite{matchings}. 
There, the authors defined a block function
$
\cM_f\colon S^{\scst V} \times S^{\scst U} \rightarrow \Z_{\geq 0}
$ 
induced by a persistence morphism $f\colon V\to U$ and provided a matrix-reduction algorithm to compute it. 

In our present case, the definition of the block function is simplified as follows. 
Let $f\colon V\to U$ be a persistence morphism 
induced by a non-expansive map $X\rightarrow Z$.
For $I=[0,a)$ and $J=[0,b)$, we write $\cM^0_f(a,b)$ instead of $\cM^0_f([0,a), [0,b))$ to simplify notation. 
We have
\begin{equation}\label{eq:def-Mf}
\cM^0_f(a,b) = 
\dim\bigg(
\dfrac{
f_0(\ker_{a}^+(V)) \cap \ker_{b}^+(U)
}{
f_0(\ker_{a}^-(V)) \cap \ker_{b}^+(U) + f_0(\ker_{a}^+(V)) \cap \ker_{b}^-(U)
}
\bigg)
\end{equation}

This definition has a direct interpretation in terms of barcodes of $\im(f)$, $\ker(f)$ and $\coker(f)$, which we study in Section~\ref{sec:properties-stability}.
In particular, $\cM^0_f(a,b)=0$ for values $a<b$.

\begin{figure}[ht!]
    \centering    \includegraphics[width=0.35\textwidth]{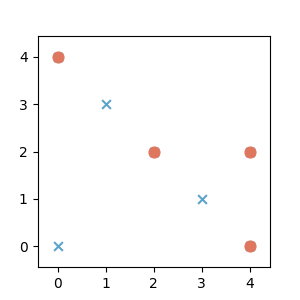}
    \caption{A set $Z_1$ consisting of seven points. The points of $X_1\subset Z_1$ are plotted with red circles while the points from $Z_1\setminus X_1$ are plotted using blue ``x" marks.}   
\label{fig:ex_0_blofun}
\end{figure}

\begin{example}\label{ex:2}
    Consider the pair $X_1\subset Z_1$  depicted in Figure~\ref{fig:ex_0_blofun} that induces a persistence morphism $h \colon \PH_0(X_1)\to \PH_0(Z_1)$. 
    In particular, there are three intervals in $\Rep\B(\PH_0(X_1))$ two with endpoint $a_1=2$ and another with endpoint $a_2 = 2.83$.
   On the other hand, there are six intervals in $\B(\PH_0(Z_1))$, five with endpoint $b_1 =1.41$ and one with endpoint $b_2=2.83$.  
    Using Formula~(\ref{eq:def-Mf}), we obtained that
    $\cM_{h}^0$ 
    and $N_{h}$ are nonzero on the following values
    \[
    \mbox{
    $\cM^0_h(2, 1.41) = 2$, $\cM^0_h(2.83, 1.41)=1$, $N_h(1.41)=2$ and $N_h(2.83)=1$.
    }
    \]
    Here, for convenience, we use approximate values for the endpoints.
\end{example}

An important observation from this paper
is the following.
\begin{remark}
    The definition of $\cM^0_f$ works even in cases when there is no underlying persistence morphism $f$. 
    That is, taking $f_0\colon\Ho_0(\VR_0(X))\to \Ho_0(\VR_0(Z))$ as the linear map induced by the set mapping $X\to Z$ (which does not need to be a non-expansive map), 
    Formula~(\ref{eq:def-Mf}) for $\cM^0_f$ is still well-defined since $f_0(\ker_a^\pm(V))$ and $\ker_b^\pm(U)$ are all subspaces from $\Ho_0(\VR_0(Z))\simeq \langle Z \rangle$.
\end{remark}
Generally, we consider the case when there is an embedding $X\subseteq Z$ and will mention otherwise when we consider $X\rightarrow Z$ as a non-expansive map or just a set mapping.

\begin{figure}[ht!]
    \centering
    \includegraphics[width=0.4\textwidth]{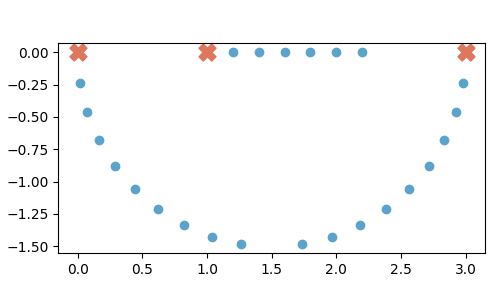}
    \caption{A set $Z_2$ with blue and red colors were the points of $
    g_\bullet(X_2)
    \subset Z_2$ are colored in red. 
    The points of $X_2$ are detailed in Example~\ref{ex:3}. }
    \label{subfig:points_geometric}
\end{figure}

\begin{example}\label{ex:3}
    Consider $X_2=\{(0,0), (0.5, 0), (1,0)\}$ together with a linear map $g_0$ induced by the set mapping $g_\bullet\colon X_2 \rightarrow Z_2$ that sends $X_2$ to the red points depicted in Figure~\ref{subfig:points_geometric}, where $(0,0)\mapsto (0,0)$, $(0.5,0)\mapsto (1,0)$ and $(1,0)\mapsto (3,0)$.
    Using Formula~(\ref{eq:def-Mf}), we obtain that, for values $a_1= 0.5$, $b_3=0.47$ and $b_4=0.8$, we have that 
    $\cM^0_g(a_1,b_3)=\cM^0_g(a_1,b_4)=1$.
    Here, notice that $a_1 = 0.5 < 0.8 =b_4$, contrary to the case when the set mapping induces a persistence morphism.
\end{example}


\subsection{The 0-persistence matching diagram $D(f_0)$}

Now, we briefly present 0-persistence matching diagrams which are a diagram representation for $\cM_f^0$,
analogous to persistence diagrams but with some fundamental differences. 
We have applied 0-persistence matching diagrams in \cite{Df:arxiv} to understand whether a subset ``represents well" the clusters from a larger dataset or not.

Let $\overline{\R}=\R\cup \{\infty\}$.
We define the \emph{0-persistence matching diagram} (also called \emph{matching diagram}, for short) $D(f_0)$ to be the multiset $(\Sf, \mf)$ given by a set $\Sf \subset \overline{\R} \times \R$, consisting of all pairs $(a,b) \in \overline{\R} \times \R$ such that $\cM^0_f(a,b)\neq 0$, 
together with the multiplicity function $\mf$ given by $\mf((a,b))=\cM^0_f(a,b)$ for all pairs $(a,b) \in \R\times \R$ and $\mf((\infty,b))=N_f(b)$ for all $b>0$. 
Notice that, $(a,b) \in \Sf$ if and only if $\cM^0_f(a,b)>0$.
In particular, $(a,b) \in \Sf$ implies $b\leq a$.
Let us see an example of a matching diagram.

\begin{figure}[ht!]
    \centering
    \includegraphics[width=0.4\textwidth]{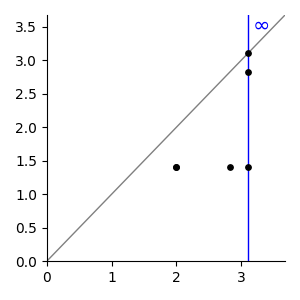}
    \caption{Depiction of the set $\Sf$ associated to the matching diagram $D(f_0)$ considered in Example~\ref{ex:diagram-matching}.
    }
    \label{fig:matching-diagram}
\end{figure}

\begin{example}\label{ex:diagram-matching}
    We consider $\cM^0_f$ 
    from Example~\ref{ex:2}.
    In this case, one can compute the matching diagram $D(f_0)=(\Sf, \mf)$ and plot the points from $\Sf$ as done in Figure~\ref{fig:matching-diagram}. 
    Notice that the pairs $(\infty, b) \in \Sf$ have been plotted over a vertical blue line with the sign $\infty$ next to it. 
    Points $(2, 1.41)$ and $(\infty, 1.41)$ have multiplicity $2$, while other elements from $\Sf$ have multiplicity $1$.
\end{example}


\section{Geometric interpretation of
$\cM^0_f$ 
}\label{sec:geometric-interpretation}

In this section, we give an alternative definition of $\cM^0_f$ that is more geometrically intuitive.
We consider a set mapping $\fbullet \colon X\rightarrow Z$ between finite metric spaces $X$ and $Z$ and the induced linear map $f_0\colon V_0\to U_0$.
We  prove that $\cM^0_f(a,b)$ equals the number of independent cycles in a certain graph that we 
denote by $G(\fbullet)_{(a,b)}$.

First, we use the notation  $\VR^-_s(X)$ and $\VR^+_s(X)$ for the sets  $\cup_{r<s}\VR_r(X)$ and $\cup_{r\leq s}\VR_r(X)=\VR_s(X)$, respectively. 
We consider $\pi_0\colon \textbf{Top}\rightarrow \textbf{Set}$, the functor sending a topological space to the set of its connected components. 

For ease, we fix values $a,b> 0$.
We denote by $\fbullet\big(\VR^+_a(X)\big)$ the graph with vertex set $\fbullet(X)$ and edges $[\fbullet(x), \fbullet(y)]$ for all $[x,y]\in \VR_a^\pm(X)$. 
Next, we consider the graphs 
$
\cA=\fbullet\big(\VR^+_a(X)\big) \cup \VR^-_b(Z)$, 
$
\cB=\fbullet\big(\VR^-_a(X)\big) \cup \VR^+_b(Z)$ and 
$
\cC=\fbullet\big(\VR^-_a(X)\big) \cup \VR^-_b(Z)$,
all of them with vertex set $Z$. Observe that $\cC\subseteq \cA, \cB$.
Besides, we consider the bipartite graph 
$L(\fbullet)$ with vertex set $  \pi_0(\cC)\cup \pi_0(\cA)$ together with the edges $(x,\iota(x))$ for all $x \in \pi_0(\cC)$ and where $\iota\colon\pi_0(\cC)\rightarrow \pi_0(\cA)$ is induced by inclusion. 
In an analogous way, we define the bipartite graph $R(\fbullet)$ with vertex set $\pi_0(\cC)\cup\pi_0(\cB)$ together with the edges $(x,\mu(x))$ for all $x \in \pi_0(\cC) $ and where $\mu\colon\pi_0(\cC)\rightarrow \pi_0(\cB)$
is induced by inclusion. 
Now, observe that the intersection 
$L(\fbullet) \cap R(\fbullet)$ is the vertex set $\pi_0(\cC)$. 
We have the following equivalences.

\begin{remark}\label{re:iso}
    By construction of {$L(\fbullet)$} and {$R(\fbullet)$}, there are isomorphisms
    $\Ho_0(\cA) \simeq \Ho_0(L(\fbullet))$, 
    $\Ho_0(\cB) \simeq \Ho_0(R(\fbullet))$ and
    $\Ho_0(\cC) \simeq \Ho_0(L(\fbullet) \cap R(\fbullet))$.
\end{remark}

Finally, consider the bipartite graph $G(\fbullet)_{(a,b)}$
with vertex set 
$\pi_0({\cal A})\cup\pi_0({\cal C})\cup \pi_0({\cal B})$ containing the edges of $L(\fbullet)$ and $R(\fbullet)$. 
The next remark will be very useful in relating the formula of $\cM_f^0$ to the graphs defined above.

\begin{remark}\label{re:ho-vr}
For all $a,b \in \R$, 
we have that
    \[
\dfrac{
\Ho_0(\VR_0(Z))
}{f_0(\ker_{a}^{\pm}(V)) + \ker_{b}^{\pm}(U)} \simeq 
\Ho_0\Big(\fbullet\big(\VR_{a}^{\pm}(X)\big)\cup \VR_{b}^{\pm}(Z)\Big)\,.
\]
\end{remark}

The following is the main result of this section that we prove after presenting some examples.

\begin{proposition}\label{prop:geometric-matching}
For all $I=[0,a)\in S_V$ and all $J=[0,b) \in S_U$,
\[
\cM^0_f(a,b) = \dim(\Ho_1(G(\fbullet)_{(a,b)}))\,.
\]
where $H_1(G)$ is the group of independent cycles of a given graph $G$.
\end{proposition}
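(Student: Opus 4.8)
The plan is to compute $\dim(\Ho_1(G(\fbullet)_{(a,b)}))$ via the Euler-characteristic / Mayer–Vietoris bookkeeping of the cover $G(\fbullet)_{(a,b)} = L(\fbullet) \cup R(\fbullet)$ and then match the resulting expression against Formula~(\ref{eq:def-Mf}) rewritten through Remark~\ref{re:ho-vr}. First I would record that a graph $G$ has $\dim(\Ho_1(G)) = |E(G)| - |V(G)| + \dim(\Ho_0(G))$, and that this can be repackaged: for the union $G(\fbullet)_{(a,b)} = L(\fbullet)\cup R(\fbullet)$ with $L(\fbullet)\cap R(\fbullet) = \pi_0(\cC)$ (a discrete set, hence contributing no edges and no $\Ho_1$), the reduced Mayer–Vietoris sequence gives
\[
0 \to \Ho_1(L)\oplus \Ho_1(R) \to \Ho_1(G(\fbullet)_{(a,b)}) \to \Ho_0(\pi_0(\cC)) \to \Ho_0(L)\oplus \Ho_0(R)\to \Ho_0(G(\fbullet)_{(a,b)})\to 0.
\]
Since $L(\fbullet)$ and $R(\fbullet)$ are bipartite graphs each of whose every edge touches a leaf in $\pi_0(\cC)$ — in fact each component of $\cC$ maps to a single component of $\cA$ and of $\cB$ — they are forests, so $\Ho_1(L)=\Ho_1(R)=0$. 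Hence $\dim(\Ho_1(G(\fbullet)_{(a,b)})) = \dim\ker\big(\Ho_0(\cC) \xrightarrow{(\iota_*,\mu_*)} \Ho_0(\cA)\oplus \Ho_0(\cB)\big)$, using Remark~\ref{re:iso} to pass from the bipartite models to $\cC,\cA,\cB$.

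Next I would identify this kernel with the numerator/denominator quotient in~(\ref{eq:def-Mf}). Remark~\ref{re:ho-vr} says $\Ho_0(\cA) \simeq \Ho_0(\VR_0(Z))/(f_0(\ker_a^+(V)) + \ker_b^-(U))$, $\Ho_0(\cB)\simeq \Ho_0(\VR_0(Z))/(f_0(\ker_a^-(V)) + \ker_b^+(U))$, and $\Ho_0(\cC)\simeq \Ho_0(\VR_0(Z))/(f_0(\ker_a^-(V)) + \ker_b^-(U))$ (matching the $\pm$ signs in $\cA,\cB,\cC$ as defined just before Remark~\ref{re:iso}). So the map $\Ho_0(\cC)\to \Ho_0(\cA)\oplus \Ho_0(\cB)$ is a sum of two quotient projections from the common space $W := \Ho_0(\VR_0(Z))/(f_0(\ker_a^-(V)) + \ker_b^-(U))$, and its kernel is
\[
\frac{\big(f_0(\ker_a^+(V)) + \ker_b^-(U)\big)\cap \big(f_0(\ker_a^-(V)) + \ker_b^+(U)\big)}{f_0(\ker_a^-(V)) + \ker_b^-(U)}.
\]
It then remains a purely linear-algebraic identity — valid because $f_0(\ker_a^-(V))\subseteq f_0(\ker_a^+(V))$ and $\ker_b^-(U)\subseteq \ker_b^+(U)$ and all four are subspaces of a common space — that this dimension equals
\[
\dim\frac{f_0(\ker_a^+(V))\cap \ker_b^+(U)}{f_0(\ker_a^-(V))\cap \ker_b^+(U) + f_0(\ker_a^+(V))\cap \ker_b^-(U)},
\]
which is exactly $\cM^0_f(a,b)$.

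The main obstacle I anticipate is that last linear-algebra step: the intersection of two sums $(A+B)\cap(A'+B')$ does not in general simplify, so one must use the specific nesting ($A'\subseteq A$, $B\subseteq B'$, where $A = f_0(\ker_a^+(V))$, $A' = f_0(\ker_a^-(V))$, $B = \ker_b^-(U)$, $B' = \ker_b^+(U)$) together with a modular-law argument, and possibly pass to the quotient by $A'+B$ first to kill cross terms, to get the two expressions to agree. I would handle it by working inside the quotient $\Ho_0(\VR_0(Z))/(A'+B)$, where the images $\bar A$ of $A$ and $\bar B'$ of $B'$ satisfy $\bar A \cap \bar B' = 0$ automatically is \emph{not} quite true, so instead I would directly verify that $(A+B)\cap(A'+B') = A'+B + \big(A\cap(A'+B')\big) \cap \big(B'+ (A'+B)\big)$ reduces to $A'+B + (A\cap B') $ after using $A'\subseteq A$, $B\subseteq B'$ and the modular law twice, yielding the claimed quotient dimension. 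A secondary point to check carefully is that $L(\fbullet)$ and $R(\fbullet)$ are genuinely forests — equivalently that distinct components of $\cC$ inside a fixed component of $\cA$ are never identified, which is clear since the edges of $L(\fbullet)$ go from $\pi_0(\cC)$ to $\pi_0(\cA)$ and $\iota$ is a function — and that $L(\fbullet)\cap R(\fbullet)=\pi_0(\cC)$ contributes trivially, both of which are immediate from the construction.
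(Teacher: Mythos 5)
Your proposal is correct and follows essentially the same route as the paper: Mayer--Vietoris for $G(\fbullet)_{(a,b)}=L(\fbullet)\cup R(\fbullet)$ with $\Ho_1(L)=\Ho_1(R)=0$, the identification of $\Ho_0(\cA),\Ho_0(\cB),\Ho_0(\cC)$ with quotients via Remark~\ref{re:ho-vr}, and then the modular-law computation $(A^++B^-)\cap(A^-+B^+)=A^-+B^-+(A^+\cap B^+)$ together with $(A^+\cap B^+)\cap(A^-+B^-)=A^-\cap B^++A^+\cap B^-$. The linear-algebra step you single out as the main obstacle is exactly what the paper isolates as Lemma~\ref{lemma:alternative-Mf}, proved by the same two identities, so the only difference is the order in which the algebraic and topological halves are carried out.
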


\begin{figure}[ht!]
\begin{center}
\includegraphics[width=0.85\textwidth]{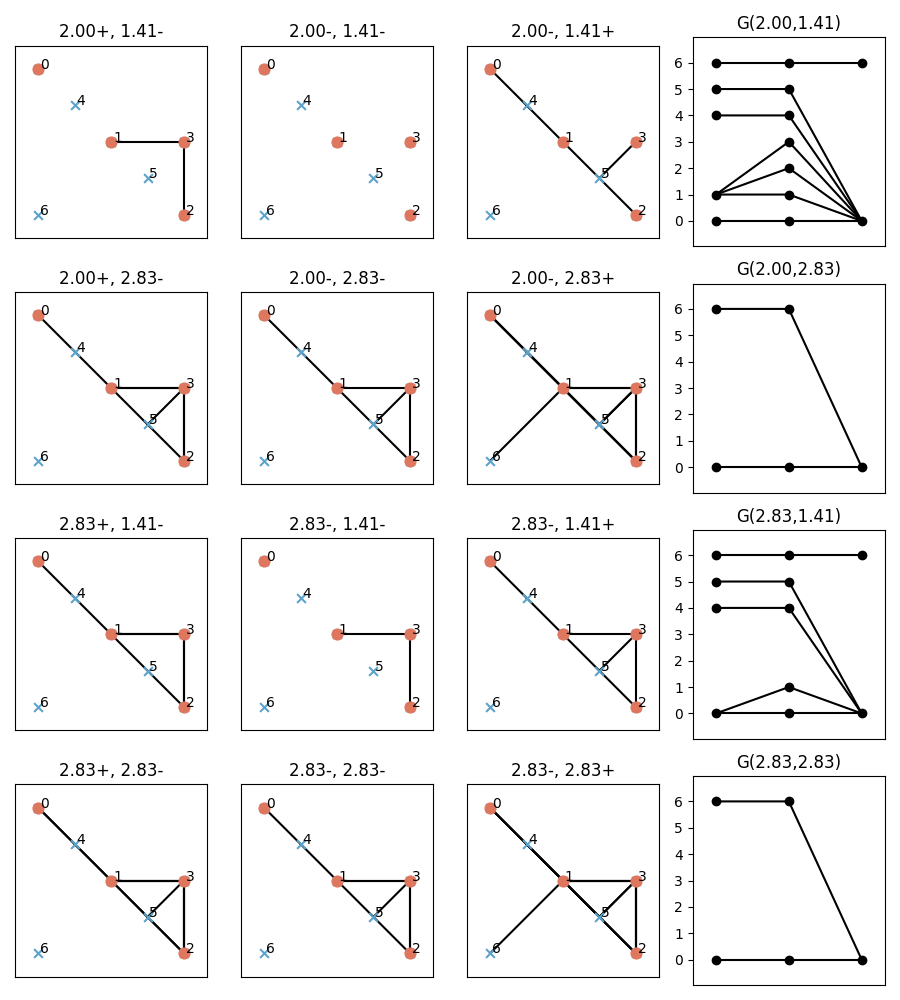}
\end{center}
\caption{
    We depict on each column, from left to right, the graphs ${\cal A}=\VR_a(X_1) \cup \VR^-_b(Z_1)$, ${\cal B}=\VR^-_a(X_1) \cup \VR^-_b(Z_1)$, ${\cal C}=\VR^-_a(X_1) \cup \VR_b(Z_1)$ and $G(h_\bullet)_{(a,b)}$ detailed in Example~\ref{ex:6points-geometric}.
    The graph $G(h_\bullet)_{(a,b)}$ is denoted as $G(a,b)$ for simplicity.
    On the rows, we range over the four pairs, from top to bottom, $(2, 1.41)$, $(2, 2.83)$, $(2.83, 1.41)$ and $(2.83, 2.83)$.
     Notice that, on the first three columns, we have numbered the vertices from $Z_1$. 
    Now, on the rightmost column, we plot $G(h_\bullet)_{(a,b)}$ where a vertex $v=(x,y)$ of $G(h_\bullet)_{(a,b)}$ satisfies that $x=0,1,2$ if $v$ is associated with a connected component $A$ from the set $\pi_0({\cal A}), \pi_0({\cal B}), \pi_0({\cal C})$, respectively, and $y=0,1,2,3,4,5,6$ 
    is given by the smallest index of the points of $A$.
}
\label{fig:geometric-6-points}
\end{figure}

\begin{example}\label{ex:6points-geometric}
    Let $h$ be the persistence morphism from Example~\ref{ex:2} and recall that, for $a_1=2$, $a_2=2.83$ and $b_1=1.41$, we have $\cM^0_h(a_1, b_1)=2$ and $\cM^0_h(a_2,b_1)=1$.
    Now, consider Figure~\ref{fig:geometric-6-points}
    where we can check that $G(h_\bullet)_{(a,b)}$ has no cycle on the pairs, $(a_1,b_2), (a_2,b_2)$, while it has two independent cycles for the pair $(a_1,b_1)$ and a cycle for the pair $(a_2,b_1)$, illustrating that the formula from Proposition~\ref{prop:geometric-matching} holds.
\end{example}

\begin{figure}[ht!]
    \centering
    \includegraphics[width=\textwidth]{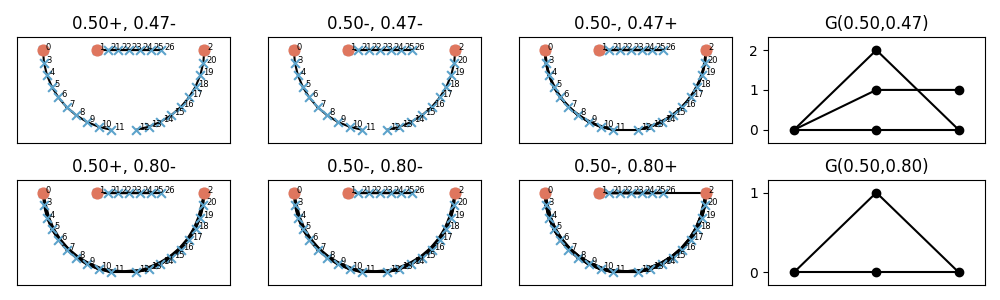}
    \caption{Geometric interpretation of $\cM^0_g$ from Example~\ref{ex:3.2}. 
    The graph $G(g_\bullet)_{(a,b)}$ is denoted as $G(a,b)$ for simplicity.}
    \label{fig:geometric-half-circle}
\end{figure}

 \begin{example}\label{ex:3.2}
    Now, recall Example~\ref{ex:3} where 
    $\cM^0_g(a_1,b_3)=\cM^0_g(a_1,b_4)=1$
    for values $a_1= 0.5$, $b_3=0.47$ and $b_4=0.8$.
    As in Example~\ref{ex:6points-geometric}, we check that Proposition~\ref{prop:geometric-matching} holds by computing the graphs  $G(g_\bullet)_{(a,b)}$, which are plotted in Figure~\ref{fig:geometric-half-circle}, for $(a,b)$ ranging over the pairs $(0.5, 0.47)$ and $(0.5, 0.8)$. 
    As one can see, both $G(g_\bullet)_{(0.5, 0.47)}$ and $G(g_\bullet)_{(0.5,0.8)}$
    contain a single cycle.
\end{example}

Just before proving Proposition~\ref{prop:geometric-matching}, we need to show the following useful result.
\begin{lemma}\label{lemma:alternative-Mf}
Let $A^{\pm}=f_0(\ker^{\pm}_a(V))$ and $B^{\pm}=\ker^{\pm}_b(U)$ where $V=\PH(X)$ and $U=\PH(Z)$.
Then Equation~(\ref{eq:def-Mf}) can be written as
\[\cM_f^0(a,b)=
\dim\bigg(\dfrac{
A^+ + B^-
}{
A^- + B^-
}
\cap 
\dfrac{
A^-+ B^+
}{
A^-+ B^-
}\bigg)
\]
for all $I=[0,a)\in S_V$ and all $J=[0,b) \in S_U$.
\end{lemma}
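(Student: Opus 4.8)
The plan is to recognize the right-hand side as a modular-lattice identity applied to the subspaces $A^+, A^-, B^+, B^-$ of $U_0 = \Ho_0(\VR_0(Z))$, after noting the basic inclusions $A^- \subseteq A^+$ and $B^- \subseteq B^+$ (both following from the definitions $\ker^-_a(V) \subseteq \ker^+_a(V)$, $\ker^-_b(U) \subseteq \ker^+_b(U)$, and $f_0$ being linear). Since the lattice of subspaces of a vector space is modular, I intend to work entirely inside the modular lattice generated by these four subspaces and translate both sides of the claimed equation into that language.

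First I would rewrite the numerator and denominator of Equation~(\ref{eq:def-Mf}). The numerator is $A^+ \cap B^+$ and the denominator is $(A^- \cap B^+) + (A^+ \cap B^-)$; so $\cM^0_f(a,b) = \dim\big( (A^+ \cap B^+) / ((A^- \cap B^+) + (A^+ \cap B^-)) \big)$, using that $\dim$ of a quotient of a subspace by a subspace is the difference of dimensions and that $(A^- \cap B^+) + (A^+ \cap B^-) \subseteq A^+ \cap B^+$. Next, on the right-hand side, both displayed quotients have the common denominator $A^- + B^-$, and each quotient sits inside the ambient quotient space $W := (A^+ + B^+)/(A^- + B^-)$ (note $A^- + B^- \subseteq A^+ + B^-$, $A^- + B^+$, $A^+ + B^+$). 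So the right-hand side is the dimension of the intersection, inside $W$, of the images of $A^+ + B^-$ and $A^- + B^+$ under the quotient map $q\colon A^+ + B^+ \to W$.

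The heart of the argument is then a dimension count in $W$, which I would carry out either by the second isomorphism theorem applied repeatedly, or more cleanly by invoking that in a modular lattice the interval $[A^- + B^-,\ A^+ + B^+]$ is itself a modular lattice and computing. Concretely: let $P = q(A^+ + B^-)$ and $Q = q(A^- + B^+)$ in $W$. One computes $\dim P = \dim(A^+ + B^-) - \dim(A^- + B^-)$ and similarly for $Q$, and $\dim(P + Q) = \dim(A^+ + B^+) - \dim(A^- + B^-)$ since $(A^+ + B^-) + (A^- + B^+) = A^+ + B^+$. Then $\dim(P \cap Q) = \dim P + \dim Q - \dim(P+Q) = \dim(A^+ + B^-) + \dim(A^- + B^+) - \dim(A^+ + B^+) - \dim(A^- + B^-)$. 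Expanding each of these four dimensions with the inclusion–exclusion formula $\dim(S + T) = \dim S + \dim T - \dim(S \cap T)$ and using modularity to simplify the cross terms — e.g. $(A^+ + B^-) \cap (A^- + B^+)$, which by modularity equals $A^-  + B^- + (A^+ \cap B^+)$ type expressions — collapses everything to $\dim(A^+ \cap B^+) - \dim\big((A^- \cap B^+) + (A^+ \cap B^-)\big)$, matching $\cM^0_f(a,b)$.

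The main obstacle is precisely the modular-lattice bookkeeping in that last step: there are several cancellations among terms like $\dim(A^+ \cap B^-)$, $\dim(A^- \cap B^+)$, $\dim(A^- \cap B^-)$, and one must apply modularity in the correct direction each time (it fails for arbitrary subspaces but holds here because we always have one of the pair contained in a relevant join). I would organize this by fixing the chain $A^- \subseteq A^+$, $B^- \subseteq B^+$ at the outset and only ever forming joins/meets that respect it, so that every use of modularity is legitimate; a short diagram of the relevant sublattice would make the cancellations transparent and keep the computation honest.
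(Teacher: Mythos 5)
Your proposal is correct, but it reaches the conclusion by a genuinely different mechanism than the paper. The paper constructs an explicit isomorphism: it exhibits a projection from $(A^+\cap B^+)/(A^-\cap B^+ + A^+\cap B^-)$ onto $(A^-+B^-+A^+\cap B^+)/(A^-+B^-)$, proves it injective via the identity $(A^+\cap B^+)\cap(A^-+B^-) = A^-\cap B^+ + A^+\cap B^-$, and then identifies the target with the displayed intersection via $(A^++B^-)\cap(A^-+B^+) = A^-+B^-+A^+\cap B^+$. You instead count dimensions: with $P,Q$ the two quotients sitting inside $(A^++B^+)/(A^-+B^-)$, you get $\dim(P\cap Q)=\dim(A^++B^-)+\dim(A^-+B^+)-\dim(A^++B^+)-\dim(A^-+B^-)$, and expanding by inclusion--exclusion this equals $\dim(A^+\cap B^+)+\dim(A^-\cap B^-)-\dim(A^+\cap B^-)-\dim(A^-\cap B^+)$, which is exactly $\dim(A^+\cap B^+)-\dim(A^-\cap B^+ + A^+\cap B^-)$ once one notes $(A^-\cap B^+)\cap(A^+\cap B^-)=A^-\cap B^-$. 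Two observations. First, your route needs less lattice theory than you fear: the cross term $(A^++B^-)\cap(A^-+B^+)$ that you single out never enters your computation --- only the join $(A^++B^-)+(A^-+B^+)=A^++B^+$ does --- so the ``modular-lattice bookkeeping'' you flag as the main obstacle essentially evaporates, and everything reduces to the universal formula $\dim(S+T)=\dim S+\dim T-\dim(S\cap T)$ plus the chain inclusions. Second, the trade-off: your argument is more elementary and mechanical, but it yields only an equality of dimensions, whereas the paper's two subspace identities produce a canonical isomorphism between the two quotient spaces; for the statement of the lemma (and for its use in the proof of Proposition~\ref{prop:geometric-matching}, which only invokes the numerical identity) either is sufficient.
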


\begin{proof}
Observe that
$A^-\subset A^+$,
$B^-\subset B^+$ and that  there is a  projection 
 \[
 \pi\colon 
 \dfrac{A^+ \cap B^+}{A^-\cap B^+ +A^+\cap B^- }
 \to \dfrac{ A^- + B^-+A^+\cap B^+ }{ A^- + B^-}
 \]
 that is also an injection because 
 $(A^+\cap B^+)\cap (A^-+B^-)=A^-\cap B^++A^+\cap B^-$
 and then $\ker\pi=0$.
 \\
Moreover, since $(A^+ +B^-)\cap (A^-+B^+)=A^-+B^-+A^+\cap B^+$ then we  have that:
$$\dfrac{
A^- + B^-+A^+\cap B^+
}{
A^- + B^-
}
=
\dfrac{
A^+ + B^-
}{
A^- + B^-
}
\cap 
\dfrac{
A^-+ B^+
}{
A^-+ B^-
}
$$
concluding the proof.
\end{proof}

Now we are ready to prove Proposition~\ref{prop:geometric-matching}.

\begin{proof}[of Proposition~\ref{prop:geometric-matching}]
Let $A^{\pm}=f_0(\ker^{\pm}_a(V))$ and $B^{\pm}=\ker^{\pm}_b(U)$. Recall that in Lemma~\ref{lemma:alternative-Mf} we have proven that 
$$\cM_f^0(a,b)=\dim\bigg(\dfrac{
A^+ + B^-
}{
A^- + B^-
}
\cap 
\dfrac{
A^-+ B^+
}{
A^-+ B^-
}\bigg).$$
Next, we have the isomorphisms 
\[
\dfrac{
A^+ + B^-
}{
A^- + B^-
} 
 \simeq 
 \ker\bigg( 
 \dfrac{
 \Ho_0(\VR_0(Z))
 }{A^- + B^-}
 \rightarrow 
 \dfrac{
 \Ho_0(\VR_0(Z))
 }{A^+ + B^-}
 \bigg)  
 \simeq \ker \big( \Ho_0(\cC) \rightarrow \Ho_0(\cA)\big)
\]
and
\begin{multline*}
\dfrac{
A^- + B^+
}{
A^- + B^-
} 
 \simeq 
 \ker\bigg( 
 \dfrac{
 \Ho_0(\VR_0(Z))
 }{A^- + B^-}
 \rightarrow 
 \dfrac{
 \Ho_0(\VR_0(Z))
 }{A^- + B^+}
 \bigg)  \simeq \ker \big( \Ho_0(\cC) \rightarrow \Ho_0(\cB)\big)\,,
\end{multline*}
where we have used that $ A^{\pm},B^{\pm}\subset \Ho_0(\VR_0(Z))$ and Remark~\ref{re:ho-vr}.
Next, considering the Mayer-Vietoris long exact sequence together with the fact that 
$\Ho_1(L(\fbullet)_{(a,b)})=0$ and 
$\Ho_1(R(\fbullet)_{(a,b)})=0$, we obtain the 
 sequence 
\[\begin{array}{cl}
0 \rightarrow \Ho_1 \big(G(\fbullet)_{(a,b)}\big)&\\
\text{\footnotesize \(\pi_1\)}\!\downarrow&\\
\Ho_0\big(L(\fbullet) \cap R(\fbullet)\big) &
\xrightarrow{\pi_2}
\Ho_0\big(L(\fbullet)\big) \oplus \Ho_0\big(R(\fbullet)\big)\,,
\end{array}
\]
satisfying that $\im (\pi_1)= \ker(\pi_2)$ because it is exact. Using the three isomorphisms given in Remark~\ref{re:iso}, we obtain 
\[
\ker \big( \Ho_0(\cC) \rightarrow \Ho_0(\cA)\big)
\simeq \ker \Big( 
\Ho_0\big(L(\fbullet) \cap R(\fbullet)\big) \rightarrow 
\Ho_0\big(L(\fbullet)\big) \Big)\,,
\]
and also 
\[
\ker \big( \Ho_0(\cC) \rightarrow \Ho_0(\cB)\big)
\simeq \ker \Big( 
\Ho_0\big(L(\fbullet) \cap R(\fbullet)\big) \rightarrow 
\Ho_0\big(R(\fbullet)\big) \Big)\,.
\]
Altogether, we have that
$$
 \Ho_1 \big(G(\fbullet\big)_{(a,b)})\simeq \im (\pi_1)= \ker(\pi_2)\simeq 
 \dfrac{
A^+ + B^-
}{
A^- + B^-
}
\cap 
\dfrac{
A^-+ B^+
}{
A^-+ B^-
}
$$
concluding the proof.
\end{proof}


\section{Properties 
of the induced block function $\cM^0_f$}\label{sec:properties-stability}

In this section, we investigate the distinctive properties of \(\cM_f^0\), which arise from the fact that we work with finite metric spaces and that all the intervals of the barcodes we consider start at zero. 


\subsection{Induced partial matching via $\cM^0_f$ }

In this subsection we prove that a non-expansive map $\fbullet\colon X\to Z$ always induces a unique partial matching  $\sigma^f\colon \Rep \B(V)\nrightarrow\Rep \B(U) $.

We start by highlighting an important property of the induced block 
function $\cM_f^0$. This property can be obtained as a consequence of combining Theorem~4.4 with Proposition~5.3. from~\cite{matchings}, but here, for completeness, we give a direct proof.

\begin{proposition}\label{prop:kappa-a-b-matching}
 Given $b \leq a$,
  $\cM_{\kk_a\to \kk_b}^0(a,b)=1$ and  $\cM_{\kk_a\to \kk_b}^0(a',b')=0$ if either $a'\neq a$ or $b'\neq b$.
\end{proposition}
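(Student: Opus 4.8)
The statement concerns the block function of a single interval-to-interval persistence morphism $\kk_a \to \kk_b$ with $b \leq a$. The plan is to compute $\cM^0_f(a',b')$ directly from Formula~(\ref{eq:def-Mf}) by identifying all the relevant subspaces of $\Ho_0(\VR_0(\cdot))$ explicitly. Here the morphism $f = (\kk_a \to \kk_b)$ has source $V = \kk_a$ and target $U = \kk_b$, so $V_0 = U_0 = \Z_2$, the map $f_0 \colon V_0 \to U_0$ is the identity, and $\rho^{\scst V}_{0r}$ is the identity for $r < a$ and zero for $r \geq a$, while $\rho^{\scst U}_{0r}$ is the identity for $r<b$ and zero for $r\geq b$.

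First I would enumerate the kernel subspaces. From the definition, $\ker_a^+(V) = \ker(\rho^{\scst V}_{0a}) = \Z_2$ (since $\rho^{\scst V}_{0a}=0$), whereas $\ker_a^-(V) = \bigcup_{0\leq r<a}\ker(\rho^{\scst V}_{0r}) = 0$. For any $a'\neq a$: if $a' < a$ then $\ker_{a'}^+(V)=0$; if $a'>a$ then $\ker_{a'}^-(V)=\Z_2=\ker_{a'}^+(V)$. Similarly $\ker_b^+(U)=\Z_2$, $\ker_b^-(U)=0$, and for $b'\neq b$ the pair $(\ker_{b'}^-(U),\ker_{b'}^+(U))$ is either $(0,0)$ (when $b'<b$) or $(\Z_2,\Z_2)$ (when $b'>b$). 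Then I would substitute into~(\ref{eq:def-Mf}): since $f_0$ is the identity, $f_0(\ker_a^+(V))=\Z_2$ and $f_0(\ker_a^-(V))=0$, so
\[
\cM^0_f(a,b) = \dim\left(\frac{\Z_2 \cap \Z_2}{0\cap\Z_2 + \Z_2\cap 0}\right) = \dim\left(\frac{\Z_2}{0}\right) = 1.
\]
For the vanishing claim, I would argue by cases on whether $a'\neq a$ or $b'\neq b$. The key observation is that in each case one of the four subspaces appearing in the quotient forces either the numerator to be $0$ or the denominator to equal the numerator. Concretely: if $a'<a$ then $f_0(\ker_{a'}^+(V))=0$, killing the numerator; if $a'>a$ then $f_0(\ker_{a'}^-(V)) = f_0(\ker_{a'}^+(V))$, so already the first summand in the denominator equals the numerator; symmetrically for $b'<b$ (numerator contains $\cap\, \ker_{b'}^+(U)=0$) and $b'>b$ (second denominator summand equals numerator). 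In all four subcases the quotient is trivial, giving $\cM^0_f(a',b')=0$.

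The computation is entirely routine once the subspaces are laid out; the only point requiring slight care is that the definition of $\cM^0_f$ in~(\ref{eq:def-Mf}) is stated for a persistence morphism induced by a non-expansive map, so I would first remark that $\kk_a\to\kk_b$ with $b\leq a$ is indeed realizable this way (or simply note, as the paper already observes, that the formula is well-defined for any linear map $f_0$ between the degree-0 homology vector spaces, which is all that is used). The main "obstacle" — really just a bookkeeping issue — is keeping the half-open versus closed endpoint conventions straight in the definitions of $\ker^\pm$, i.e.\ remembering that $\ker_b^+$ uses $\rho_{0b}$ (death at $r\leq b$) while $\ker_b^-$ uses $r<b$; once that is fixed, every case is a one-line dimension count.
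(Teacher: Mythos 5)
Your proposal is correct and follows essentially the same route as the paper's own proof: identify $f_0$ as the identity on $\Z_2$, compute the four kernel subspaces $\ker^{\pm}$ explicitly, and argue the vanishing by cases on $a'<a$, $a'>a$, $b'<b$, $b'>b$. You actually spell out the $b'\neq b$ cases that the paper dismisses with ``a similar proof can be done,'' but the argument is the same.
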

 \begin{proof}
 First, observe that
 $(\kk_a)_0\to (\kk_b)_0$ is the identity map by definition. Besides,
 \[\mbox{$\ker_{a}^+(\kk_a)=\Z_2=\ker_{b}^+(\kk_b)$ and $\ker_{a}^-(\kk_a)=0=\ker_{b}^-(\kk_b)$.}\]
 Let $f=\kk_a\to \kk_b$, $V=\kk_a$ and $U=\kk_b$.
 Then, 
\[\dfrac{
f_0(\ker_{a}^+(V)) \cap \ker_{b}^+(U)
}{
f_0(\ker_{a}^-(V)) \cap \ker_{b}^+(U) + f_0(\ker_{a}^+(V)) \cap \ker_{b}^-(U)
}=\dfrac{
\Z_2 \cap \Z_2
}{
0\cap \Z_2 + \Z_2 \cap 0
}=\Z_2\,.\]
Now, for 
 $a'< a$, we have that $\ker_{a'}^+(\kk_a)=0$ and then
 $f_0(\ker_{a'}^+(V)) \cap \ker_{b}^+(U)=0$
 so 
 $\cM_{\kk_a\to \kk_b}^0(a',b')=0$.
 On the other hand, if 
 $a< a'$, then 
 $\ker_{a'}^{+}(\kk_a)=\Z_2=\ker_{a'}^{-}(\kk_a)$.
Then, 
$f_0(\ker^-_{a'}(V)) = f_0(\ker^+_{a'}(V))$
and we have 
 \[\dfrac{
f_0(\ker_{a'}^+(V)) \cap \ker_{b'}^+(U)
}{
f_0(\ker_{a'}^-(V)) \cap \ker_{b'}^+(U) + f_0(\ker_{a'}^+(V)) \cap \ker_{b'}^-(U)
}=\dfrac{
f_0(\ker_{a'}^+(V)) \cap \ker_{b'}^+(U)
}{
f_0(\ker_{a'}^+(V)) \cap \ker_{b'}^+(U)
}=0\,,\]
and so $\cM_{\kk_a\to \kk_b}^0(a',b')=0$.
A similar proof can be done for $b'\neq b$. 
\end{proof}

The following result states that $\cM^0_f$ induces a unique partial matching, denoted by $\sigma^f$, whenever $f$ is induced by a non-expansive map. 

\begin{lemma}\label{lemma:unique}
Consider a persistence morphism $f\colon V\to U$ induced by a non-expansive map 
$\fbullet\colon X\rightarrow Z$ between finite metric spaces $X$ and $Z$.
Then, the block function $\cM_f^0$ induces a unique partial matching $\sigmaf\colon \Rep\B(V) \nrightarrow \Rep\B(U)$.
\end{lemma}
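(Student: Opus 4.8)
The goal is to show that the block function $\cM_f^0$ gives rise to a \emph{well-defined} partial matching $\sigmaf$ between $\Rep\B(V)$ and $\Rep\B(U)$, and that this matching is \emph{unique}. The natural strategy is to use the structural decomposition of $f$ provided by Remark~\ref{rem:ulrike}: writing $f$ as a direct sum of elementary pieces $\kk_a\to\kk_b$ (with $b\le a$), $0\to\kk_b$, $\kk_a\to 0$, and $\kk_\infty\to\kk_\infty$. Since $\cM^0_f$ is defined via dimensions of subquotients of $\Ho_0(\VR_0)$ that behave additively over direct sums (each summand contributes its own copy in $\ker_a^\pm$ and $\ker_b^\pm$, with no interaction), one gets
\[
\cM^0_f(a',b') \;=\; \sum_{\text{summands } (\kk_a\to\kk_b)} \cM^0_{\kk_a\to\kk_b}(a',b') \;=\; r^{b'}_{a'}\,,
\]
using Proposition~\ref{prop:kappa-a-b-matching} to see that only the $\kk_a\to\kk_b$ summands with $(a,b)=(a',b')$ contribute, each contributing $1$. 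So $\cM^0_f(a',b')$ literally counts the number of $\kk_{a'}\to\kk_{b'}$ summands in the decomposition of $f$.

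\textbf{Key steps, in order.} First I would establish the additivity of $\cM^0_f$ over direct sums of persistence morphisms: if $f=f'\oplus f''$ then $\ker_a^\pm(V)=\ker_a^\pm(V')\oplus\ker_a^\pm(V'')$, $f_0$ respects this splitting, and $\ker_b^\pm(U)$ splits likewise, so the subquotient in Formula~(\ref{eq:def-Mf}) decomposes as a direct sum and dimensions add. Second, apply this to the decomposition in Remark~\ref{rem:ulrike}, and invoke Proposition~\ref{prop:kappa-a-b-matching} to conclude $\cM^0_f(a',b')=r^{b'}_{a'}$ and in particular $\cM^0_f(a',b')=0$ unless $b'\le a'$ and both $a',b'$ are actual endpoints in $S^{\scst V}$, $S^{\scst U}$. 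Third, construct $\sigmaf$: for each pair $(a,b)$ with $\cM^0_f(a,b)=r^b_a>0$, it must match exactly $r^b_a$ of the (as-yet-unmatched) bars of $\B(V)$ with endpoint $a$ to $r^b_a$ of the bars of $\B(U)$ with endpoint $b$. I would then check consistency: for a fixed endpoint $a\in S^{\scst V}$, the total $\sum_{b}\cM^0_f(a,b) = \sum_b r^b_a$ equals the number of bars with endpoint $a$ that map nontrivially, which is at most $m^{\scst V}(a)$ (again by Remark~\ref{rem:ulrike}, since each $\kk_a$-summand of $V$ appears in exactly one elementary piece); symmetrically for $b\in S^{\scst U}$. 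This shows the prescribed matching numbers are simultaneously realizable. Finally, uniqueness: since all intervals start at $0$, two bars of $\B(V)$ with the same endpoint $a$ are indistinguishable (there is no further parameter to break ties), and likewise in $\B(U)$; hence any two partial matchings realizing the same block-function counts differ only by a permutation within fibers of equal-endpoint bars, and are therefore identified as the same partial matching $\sigmaf$ (matchings are only defined up to such relabelling of $\Rep$). I would phrase this last point as: the combinatorial data $\{\cM^0_f(a,b)\}$ together with $\B(V),\B(U)$ determines $\sigmaf$ up to the canonical identification.

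\textbf{Main obstacle.} The delicate point is the \emph{well-definedness} in step three — that the required partial matching exists, i.e.\ that the numbers $\cM^0_f(a,b)=r^b_a$ can all be met simultaneously without over-committing any single bar. This is exactly where one needs the full force of the decomposition in Remark~\ref{rem:ulrike}: it guarantees that each interval summand of $V$ is used in \emph{at most one} elementary morphism (either a $\kk_a\to\kk_b$, or a $\kk_a\to 0$), so $\sum_b r^b_a + d^+_a = m^{\scst V}(a)$ and $\sum_a r^b_a + d^-_b = m^{\scst U}(b)$, making the bipartite matching trivially consistent. So the real work is a careful bookkeeping argument tying $\cM^0_f$ to the multiplicities via the normal form, rather than anything analytically hard; the additivity lemma and Proposition~\ref{prop:kappa-a-b-matching} do all the heavy lifting.
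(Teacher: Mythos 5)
Your proof is correct, but it takes a genuinely different route from the paper's. The paper obtains the two key inequalities $\sum_{b\leq a}\cM^0_f(a,b)\leq m^{\scst V}\!(a)$ and $\sum_{a\geq b}\cM^0_f(a,b)\leq m^{\scst U}\!(b)$ by citing Theorems~4.4 and~5.5 of the block-function paper (the latter applying because no intervals are nested), and then concludes immediately. You instead derive everything from the ladder normal form of Remark~\ref{rem:ulrike}: additivity of $\cM^0_f$ over direct sums plus Proposition~\ref{prop:kappa-a-b-matching} gives $\cM^0_f(a,b)=r^b_a$, from which the two inequalities are just the bookkeeping identities $\sum_{b\leq a} r^b_a + d^+_a = m^{\scst V}\!(a)$ and $\sum_{a\geq b} r^b_a + d^-_b = m^{\scst U}\!(b)$. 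This is essentially the machinery the paper deploys only later, in the proof of Proposition~\ref{prop:decomposition-f} for injective maps; your argument shows it already suffices here and, as a bonus, establishes the stronger identification of $\cM^0_f(a,b)$ with the number of $\kk_a\to\kk_b$ summands in the non-injective case as well. What the paper's route buys is brevity; what yours buys is self-containedness with respect to the external block-function results and an explicit structural explanation of why the inequalities hold. Two small points to tighten: Proposition~\ref{prop:kappa-a-b-matching} only treats summands of type $\kk_a\to\kk_b$, so you should add the (one-line) check from Formula~(\ref{eq:def-Mf}) that summands of type $0\to\kk_b$, $\kk_a\to 0$ and $\kk_\infty\to\kk_\infty$ contribute zero (the numerator vanishes because either $f_0(\ker^+_a(V))=0$ or $\ker^+_b(U)=0$); and the uniqueness claim is, as you say, uniqueness up to permutation of bars with equal endpoints, which is the same convention the paper implicitly adopts.
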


\begin{proof}
    To start, by Theorem~4.4. from~\cite{matchings}, we have that $\cM^0_f$ is well-defined. That is,  for any $I=[0,a)\in S^{\scst V}$ with multiplicity $m^{\scst V}\!(a)$, 
    $\cM^0_f$ is such that
\begin{equation}\label{eq:bound1}   
   \mbox{$ \sum_{b\leq a}\cM^0_f(a,b)\leq m^{\scst V}\!(a)$.}
\end{equation}
Besides, Theorem 5.5 from~\cite{matchings} states that if there are no nested intervals then  $\sum\cM_f(I,J)\leq m^{\scst U}(J)$ for any interval $J\in S^{\scst U}$.
Now, since all intervals start at $0$---we may assume that $J=[0,b)$ with multiplicity $m^{\scst U}\!(b)$---we have that
\begin{equation}\label{eq:bound2}
    \mbox{$   \sum_{a\geq b} \cM^0_{f}(a, b) \leq m^{\scst U}\!(b)$.}
\end{equation}
Altogether, by Inequalities~(\ref{eq:bound1}) and~(\ref{eq:bound2}), it follows that $\cM^0_f$ induces a unique partial matching 
$
\sigmaf\colon \Rep\B(V) \nrightarrow \Rep\B(U)
$.
\end{proof}

Later, in~Subection~\ref{subsec:ladder-decomposition}, we see that if the non-expansive map $\fbullet\colon X\rightarrow Z$ is injective then Inequality~(\ref{eq:bound1}) is an equality and $\sigmaf$ is an injection.


\subsection{Ladder 
decompositions 
and the  block function $\cM^0_f$
}~\label{subsec:ladder-decomposition}

In this subsection, we prove that 
a non-expansive injective map $\fbullet \colon X\hookrightarrow Z$ always induces a unique injection $\sigma^f\colon 
\Rep \B(V)\hookrightarrow
\Rep \B(U) $. 
Using this fact, we also relate $\cM^0_f$ to the ladder module structure of $f$ and to the image, kernel and cokernel of $f$.

\begin{proposition}\label{prop:decomposition-f}
Consider a persistence morphism $f\colon V \rightarrow U$ that is induced by a non-expansive injective map $\fbullet\colon X\hookrightarrow Z$. 
Then, we have the following ladder decomposition of $f$,
\begin{equation}
\label{eq:f-direct-sums}
\begin{array}{rl}
f \simeq  &
\mbox{$\bigg(
\bigoplus_{b > 0}
\bigoplus_{a\geq b }
\bigoplus_{s\in\llbracket\cM^0_f(a,b)\rrbracket} (\kk_{a} \rightarrow \kk_{b})\bigg) 
$}\\
&
\mbox{$
\oplus 
\bigg(\bigoplus_{b>0} 
\bigoplus_{s\in \llbracket N_f(b)\rrbracket}
(0 \rightarrow \kk_{b})
\bigg)\oplus
\big( \kk_{\infty}\rightarrow \kk_{\infty}\big)
$}
\end{array}
\end{equation}
where $N_f(b) = m^{\scst U}(b)-\sum_{a>0}\cM^0_f(a,b)$,  and $m^{\scst U}(b)$ denotes the multiplicity of $[0,b)$ in $\B(U)$.
\end{proposition}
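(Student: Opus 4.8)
The idea is to start from the general decomposition of a persistence morphism induced by a non-expansive map, given in Remark~\ref{rem:ulrike}, and then argue that injectivity of $\fbullet$ eliminates all the ephemeral $(\kk_a \to 0)$ summands and forces the remaining multiplicities to be exactly the values $\cM^0_f(a,b)$ and $N_f(b)$. Concretely, Remark~\ref{rem:ulrike} gives us
\[
f \simeq \Big(\textstyle\bigoplus_{b>0}\bigoplus_{a\geq b}\bigoplus_{i\in\llbracket r^b_a\rrbracket}(\kk_a\to\kk_b)\Big)\oplus\Big(\bigoplus_{b>0}\bigoplus_{j\in\llbracket d^-_b\rrbracket}(0\to\kk_b)\Big)\oplus\Big(\bigoplus_{a>0}\bigoplus_{j\in\llbracket d^+_a\rrbracket}(\kk_a\to 0)\Big)\oplus(\kk_\infty\to\kk_\infty),
\]
and the task reduces to identifying $r^b_a$, $d^-_b$, $d^+_a$.

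\textbf{Step 1: block function is additive over direct sums and computes $r^b_a$.} The formula~(\ref{eq:def-Mf}) for $\cM^0_f$ is defined in terms of $f_0$ and the subspaces $\ker^\pm_a(V)$, $\ker^\pm_b(U)$; since all of $\ker^\pm$, images, sums and intersections distribute over direct sum decompositions of $f$, the block function is additive: $\cM^0_{f\oplus g} = \cM^0_f + \cM^0_g$. Applying this to the decomposition above and using Proposition~\ref{prop:kappa-a-b-matching} (which says $\cM^0_{\kk_a\to\kk_b}$ is the indicator of $(a,b)$) together with the obvious facts that $\cM^0_{0\to\kk_b}=0$, $\cM^0_{\kk_a\to 0}=0$, and $\cM^0_{\kk_\infty\to\kk_\infty}=0$, we get $\cM^0_f(a,b) = r^b_a$ for all $b\leq a$. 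So the first block of~(\ref{eq:f-direct-sums}) is already established, with the correct multiplicities.

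\textbf{Step 2: injectivity kills the $(\kk_a\to 0)$ summands, i.e. $d^+_a = 0$.} This is where injectivity of $\fbullet$ enters, and it is the crux of the argument. A summand $\kk_a\to 0$ means there is a class in $V$ born at $0$, dying at $a$, whose image under $f_0$ already lies in $\ker^+_a(U)$-complement behaviour — more precisely, $d^+_a$ counts (a complement of) the part of $\ker^+_a(V)$ that $f_0$ maps into $\ker^-_a(U)$ plus... rather than chase this, I would argue directly at the level of $V_0$ and $U_0$: a non-expansive \emph{injective} set map $\fbullet\colon X\hookrightarrow Z$ induces $f_0\colon V_0 = \langle X\rangle \to U_0 = \langle Z\rangle$ which is injective on the generating set and hence an injective linear map. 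Then for each $r\geq 0$ the graph morphism $\VR_r(X)\to\VR_r(Z)$ is injective on vertices, so the induced map $f_r\colon \Ho_0(\VR_r(X))\to\Ho_0(\VR_r(Z))$ sends distinct components to components — and the key point is that $f_0$ injective plus commutativity with structure maps forces: if a class $v\in V_0$ has $f_0(v)\in\ker^+_a(U)$ then $v\in\ker^+_a(V)$ (two points of $X$ are merged in $\VR_a(X)$ iff their images are merged in $\VR_a(Z)$... no — only the "if" is automatic; the "only if" needs injectivity combined with the fact that a path in $\VR_a(Z)$ between images of $X$-points need not stay inside $\fbullet(X)$). The correct statement to extract is: $\ker^+_a(V) = f_0^{-1}(\ker^+_a(U))$ and $\ker^-_a(V) = f_0^{-1}(\ker^-_a(U))$, i.e. $f_0$ restricted to $\ker^+_a(V)/\ker^-_a(V)$ is injective. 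Granting this, the summand $\kk_a\to 0$ would contribute a nonzero vector to $\ker^+_a(V)/\ker^-_a(V)$ on which $f_0$ vanishes, contradicting injectivity; hence $d^+_a=0$ for all $a$. I expect this step — proving $\ker^{\pm}_a(V)=f_0^{-1}(\ker^{\pm}_a(U))$ from injectivity of $\fbullet$ — to be the main obstacle, and I would prove it by a connectivity argument on $\VR_a$: if $x,x'\in X$ and $\fbullet(x),\fbullet(x')$ lie in the same component of $\VR_a(Z)$, one uses that they lie in the same component of $\fbullet(\VR_a(X))$ — which holds precisely because... actually this is false in general, so the right route is instead to note that $\ker^+_a(V)$ is the span of $\{[x]-[x'] : x,x'$ in the same component of $\VR_a(X)\}$ and $f_0([x]-[x']) = [\fbullet(x)]-[\fbullet(x')]$; injectivity of $f_0$ then gives $\ker^+_a(V)\cap\ker f_0 = 0$, while Remark~\ref{rem:ulrike}'s $(\kk_a\to 0)$ block sits inside $\ker f_0\cap\ker^+_a(V)$, forcing $d^+_a=0$.

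\textbf{Step 3: assemble and identify $N_f(b)=d^-_b$, and upgrade~(\ref{eq:bound1}) to equality.} With $d^+_a=0$, the decomposition of Remark~\ref{rem:ulrike} becomes exactly the shape of~(\ref{eq:f-direct-sums}) with the $(\kk_a\to\kk_b)$-multiplicities equal to $\cM^0_f(a,b)$ (Step 1) and the $(0\to\kk_b)$-multiplicities equal to $d^-_b$. Counting dimensions of $U_0$ (equivalently, reading off $\B(U)$ from the target of the decomposition) gives $m^{\scst U}(b) = \sum_{a>0}\cM^0_f(a,b) + d^-_b$, hence $d^-_b = m^{\scst U}(b)-\sum_{a>0}\cM^0_f(a,b) = N_f(b)$, which is the displayed formula. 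As a byproduct, counting $\B(V)$ from the source side gives $m^{\scst V}(a) = \sum_{b\leq a}\cM^0_f(a,b) + d^+_a = \sum_{b\leq a}\cM^0_f(a,b)$, so Inequality~(\ref{eq:bound1}) is an equality and $\sigma^f$ is an injection, as promised in the remark following Lemma~\ref{lemma:unique}. This completes the proof.
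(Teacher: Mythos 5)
Your proposal is correct and follows essentially the same route as the paper: start from the decomposition in Remark~\ref{rem:ulrike}, use additivity of $\cM^0_f$ over direct sums together with Proposition~\ref{prop:kappa-a-b-matching} to identify $r^b_a=\cM^0_f(a,b)$, use injectivity of $f_0$ to force $d^+_a=0$, and count multiplicities in the codomain to get $d^-_b=N_f(b)$. The detours in your Step~2 (the attempted claims about $\ker^{\pm}_a(V)=f_0^{-1}(\ker^{\pm}_a(U))$) are unnecessary: as you conclude, and as the paper argues, a $(\kk_a\to 0)$ summand would give a nonzero element of $\ker(f_0)$, which already contradicts the injectivity of $f_0=\fbullet$ on the generating set.
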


\begin{proof}
By Remark~\ref{rem:ulrike}, there are integers $r^b_a\geq 0$ and $d^+_a,d^-_b\geq 0$ such that
\[\begin{array}{rr}
f \simeq  &
\mbox{$
\bigg(
\bigoplus_{b > 0} 
\bigoplus_{a\geq b}
\bigoplus_{s\in \llbracket r^b_a\rrbracket} (\kk_{a} 
\rightarrow \kk_{b})\bigg) 
\oplus 
\bigg(\bigoplus_{b>0} 
\bigoplus_{s\in \llbracket d^-_b\rrbracket}
(0 \rightarrow \kk_{b})\bigg)
$}
\\
&\mbox{$
\oplus 
\bigg(\bigoplus_{a>0} \bigoplus_{s\in \llbracket d^+_b\rrbracket}
(\kk_{a} \rightarrow 0)\bigg) \oplus \big(\kk_{\infty}\rightarrow \kk_{\infty}\big)$}.
\end{array}
\]
Now, by Lemma~\ref{lemma:unique}, the block function $\cM_f^0$ induces a partial matching.
Since $\cM_f^0$ is linear with respect to direct sums and, by Proposition~\ref{prop:kappa-a-b-matching}, $\cM_{\kk_a\to \kk_b}^0(a,b)=1$.
It follows that 
$\cM^0_f(a,b)=r^b_a$ for all $a\geq b>0$. 
On the other hand, it also follows that $d^-_b = N_f(b)$ since there must be as many copies as $m^{\scst U}(b)$ in the codomain of $f$. 
Finally, the induced map 
$
f_0\colon\Ho_0(\VR_0(X)) \rightarrow \Ho_0(\VR_0(Z))
$ 
is an injection since it is equal to the injection $\fbullet\colon X\hookrightarrow Z$. 
This implies that $d^+_a=0$ for all $a>0$, since otherwise $f_0$ would not be injective. 
Altogether we obtain Equation~(\ref{eq:f-direct-sums}).
\end{proof}

 \begin{example}\label{ex:2.1}
    Consider the pair $X_1\subset Z_1$  depicted in Figure~\ref{fig:ex_0_blofun}. From Example~\ref{ex:2}, $\cM_h^0$ 
    and $N_h$ are nonzero on the following values
    \[\mbox{
        $\cM^0_h(2, 1.41) = 2$, 
        $\cM^0_h(2.83, 1.41)=1$, 
        $N_h(1.41)=2$ and 
        $N_h(2.83)=1$.
    }\]
    Then, by Proposition~\ref{prop:decomposition-f}, we obtain the ladder decomposition
\[\begin{array}{rr}
   h  \simeq & 
   \mbox{$(\kk_{2}\rightarrow \kk_{1.41}) \oplus 
   (\kk_{2}\rightarrow \kk_{1.41}) \oplus    (\kk_{2.83}\rightarrow \kk_{1.41}) 
   \oplus
    (0 \rightarrow \kk_{1.41})
    $} 
    \\
    &\mbox{$
    \oplus
    (0 \rightarrow \kk_{1.41})
    \oplus
    (0 \rightarrow \kk_{2.83})
    \oplus \big(\kk_{\infty}\rightarrow \kk_{\infty}\big)
    $.}
    \end{array}
    \]
\end{example}

An immediate consequence of Proposition~\ref{prop:decomposition-f} is that Inequality~(\ref{eq:bound1}) is, in fact, an equality since there are no unmatched intervals by $\cM^0_f$. 
Furthermore, Proposition~\ref{prop:decomposition-f} also implies that $\cM^0_f$ always induces an injective assignment 
$\sigmaf\colon \Rep\B(V) \hookrightarrow \Rep\B(U)$.
This observation leads to the following result.

\begin{proposition}~\label{prop:injective-map}
    Let $\fbullet\colon X\hookrightarrow Z$ be an injective map. Then $\cM^0_f$ induces an injection $\sigma^f\colon \Rep\B(V) \hookrightarrow \Rep\B(U)$.
\end{proposition}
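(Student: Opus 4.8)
The plan is to deduce this directly from Proposition~\ref{prop:decomposition-f}, which already does all the real work. First I would observe that an injective non-expansive map $\fbullet\colon X\hookrightarrow Z$ is in particular a non-expansive map, so it induces a persistence morphism $f\colon V\to U$ with $V=\PH_0(X)$, $U=\PH_0(Z)$, and by Lemma~\ref{lemma:unique} the block function $\cM^0_f$ already induces a \emph{unique} partial matching $\sigmaf\colon \Rep\B(V)\nrightarrow\Rep\B(U)$. So the only thing left to establish is that this partial matching is in fact total on the domain side, i.e.\ that $\coim(\sigmaf)=\Rep\B(V)$, equivalently that $\sigmaf$ is an injection defined on all of $\Rep\B(V)$.

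Next I would invoke the ladder decomposition from Proposition~\ref{prop:decomposition-f}: since $\fbullet$ is injective and non-expansive, $f$ decomposes as a direct sum of summands of the form $(\kk_a\to\kk_b)$ with $a\geq b$, summands $(0\to\kk_b)$, and the single summand $(\kk_\infty\to\kk_\infty)$ --- crucially, \emph{with no summand of the form $(\kk_a\to 0)$}, because $f_0$ agrees with the injection $\fbullet$ and hence is injective, forcing $d^+_a=0$ for all $a$. Counting summands of type $(\kk_a\to\kk_b)$, the number with fixed left endpoint $a$ is $\sum_{b\leq a}\cM^0_f(a,b)$, and by the decomposition this must equal the total multiplicity $m^{\scst V}(a)$ of $[0,a)$ in $\B(V)$ (every copy of $\kk_a$ in the domain appears as the domain of exactly one such summand). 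Thus Inequality~(\ref{eq:bound1}) is an equality for every $a$, which says precisely that no element of $\Rep\B(V)$ is left unmatched by $\sigmaf$. Combined with Inequality~(\ref{eq:bound2}), which guarantees $\sigmaf$ is well-defined as a partial matching (each target interval is used at most to its multiplicity), we conclude that $\sigmaf$ is a genuine injection $\Rep\B(V)\hookrightarrow\Rep\B(U)$.

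I expect there is essentially no obstacle here: the statement is a corollary, and the one substantive point --- that injectivity of $\fbullet$ kills the $(\kk_a\to 0)$ summands, turning the inequality $\sum_{b\leq a}\cM^0_f(a,b)\leq m^{\scst V}(a)$ into an equality --- has already been argued inside the proof of Proposition~\ref{prop:decomposition-f}. The only care needed is to phrase the conclusion correctly: ``induces an injection'' should be read as ``$\sigmaf$ is total on $\Rep\B(V)$ and injective,'' and I would state it that way to avoid ambiguity with the notion of partial matching. So the write-up is short: cite Lemma~\ref{lemma:unique} for existence and uniqueness of $\sigmaf$, cite Proposition~\ref{prop:decomposition-f} for the absence of $(\kk_a\to 0)$ summands and hence equality in~(\ref{eq:bound1}), and conclude that $\sigmaf\colon\Rep\B(V)\hookrightarrow\Rep\B(U)$ is an injection.
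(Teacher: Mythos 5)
There is a genuine gap: you have read the hypothesis as ``injective \emph{non-expansive} map,'' but the proposition only assumes that $\fbullet\colon X\hookrightarrow Z$ is a set injection (this is what the arrow $\hookrightarrow$ denotes in Table~\ref{tab:notation}, as opposed to an embedding $X\subseteq Z$ or a non-expansive map). For a bare set injection there is in general no induced persistence morphism $f\colon V\to U$, so neither Lemma~\ref{lemma:unique} nor Proposition~\ref{prop:decomposition-f} applies directly; only the linear map $f_0\colon V_0\to U_0$ exists, and $\cM^0_f$ is defined purely through Formula~(\ref{eq:def-Mf}). Example~\ref{ex:3} shows this is not a vacuous distinction: there one gets $\cM^0_g(a,b)\neq 0$ with $a<b$, which is impossible when a persistence morphism is present, so your counting argument via the ladder decomposition cannot be invoked as stated. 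Everything you wrote is correct for the non-expansive injective case, but that case is precisely the content of Proposition~\ref{prop:decomposition-f} and the paragraph preceding the statement; the proposition you are asked to prove is the strictly more general one.

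The missing idea, which is the paper's actual argument, is a rescaling trick: for $\delta>0$ define $X^\delta$ to be the set $X$ with metric given by $d_X(x,y)+\delta$ for $x\neq y$. Since $X$ and $Z$ are finite, for $\delta$ large enough the composition $\fbullet^\delta\colon X^\delta\to X\to Z$ is non-expansive (and still injective), so Proposition~\ref{prop:decomposition-f} applies to it. One then checks from the definition of $X^\delta$ that $\ker^\pm_a(V)=\ker^\pm_{a+\delta}(\PH_0(X^\delta))$, hence $\cM^0_f(a,b)=\cM^0_{f^\delta}(a+\delta,b)$ by Formula~(\ref{eq:def-Mf}); this shift identifies $\Rep\B(V)$ with $\Rep\B(\PH_0(X^\delta))$ and transports the injection obtained for $f^\delta$ back to one for $f$. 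Without some device of this kind your proof does not cover the stated hypothesis.
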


\begin{proof}
To start, given $\delta>0$, we define $X^\delta$ to be the metric space with set $X$ and metric $d_{X\delta}$ given by $d_{X\delta}(x,y)=d_X(x,y)+\delta$ for all $x\neq y$. 
Thus, there is a non-expansive map $X^\delta \rightarrow X$. Further, since we are working with finite metric spaces, taking $\delta>0$ high enough, the composition of $X^\delta\rightarrow X$ with $\fbullet$ is non-expansive; we denote this composition as $\fbullet^\delta$.
Now, by definition of $X^\delta$, it follows that $\ker_a^\pm(V)=\ker_{a+\delta}^\pm(\PH_0(X^\delta))$ for all $a>0$.
Thus, by Equation~(\ref{eq:def-Mf}), we have $\cM_f^0(a,b)= \cM_{f^\delta}^0(a+\delta, b)$ for all $a,b> 0$. 
In particular, we deduce that $\cM_f^0$ induces an injection $ \Rep\B(V) \hookrightarrow \Rep\B(U)$ if and only if $\cM_{f^\delta}^0$ does. 
Now, since $\fbullet^{\delta}$ is injective and non-expansive,
$\cM_{f^\delta}^0$ induces an 
injection  
$ \Rep\B(\PH_0(X^\delta)) \hookrightarrow \Rep\B(U)$
by Proposition~\ref{prop:decomposition-f}, concluding the proof.
\end{proof}

Also, notice that Proposition~\ref{prop:decomposition-f} gives an alternative definition for $\cM^0_f$:
\[
\cM^0_f(a,b) = \# \Big\{ 
\mbox{ summands }
\kk_{a} \rightarrow \kk_{b}
\mbox{ in the ladder decomposition of } f
\Big\}\,.
\]

Besides, $\cM^0_f$ has a direct relation with the interval decomposition for the persistence modules $\ker(f)$, $\coker(f)$ and $\im(f)$, as is shown in the following result.
\begin{proposition}\label{prop:isomorph-im-ker-coker}
There are isomorphisms:
\[\mbox{$
\im(f) \simeq 
   \bigg(\bigoplus_{b>0} \bigoplus_{a\geq b}
    \bigoplus_{s\in\llbracket\cM^0_f(a,b)\rrbracket}
   \kk_{b}\bigg) \oplus \kk_{\infty}$,}
\]
\[\mbox{$\ker(f) \simeq 
    \bigoplus_{b>0} \bigoplus_{a > b} \bigoplus_{s\in\llbracket\cM^0_f(a,b)\rrbracket}
    \kk_{[b,a)}$, $\;$ and also }
\]
\[\mbox{$\coker(f) \simeq \bigoplus_{b>0} 
\bigoplus_{s\in\llbracket N_f(b)\rrbracket}
\kk_{b}$.}\]
\end{proposition}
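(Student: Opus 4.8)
The plan is to deduce all three isomorphisms from the ladder decomposition of $f$ provided by Proposition~\ref{prop:decomposition-f}, using the fact that the functors $\im(-)$, $\ker(-)$ and $\coker(-)$ commute with finite direct sums of persistence morphisms. Indeed, if $f \simeq \bigoplus_\lambda g_\lambda$ as persistence morphisms, then $\im(f) \simeq \bigoplus_\lambda \im(g_\lambda)$, and similarly for kernels and cokernels, since images, kernels and cokernels are computed pointwise in $\R$ and the pointwise linear-algebra constructions respect direct sums, while the structure maps of $\im(f)$, $\ker(f)$, $\coker(f)$ are by definition the (co)restrictions of those of $U$ and $V$. So it suffices to compute $\im$, $\ker$ and $\coker$ on each of the three types of summands appearing in Equation~(\ref{eq:f-direct-sums}).

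\textbf{Key steps.} First I would record the elementary computations for the building blocks. For a summand $\kk_a \to \kk_b$ with $a \geq b > 0$: at each parameter $r$ the map is either the identity $\Z_2 \to \Z_2$ (for $0 \leq r < b$) or the zero map (for $r \geq b$, where either source or target vanishes), hence $\im(\kk_a \to \kk_b) \simeq \kk_b$, $\coker(\kk_a \to \kk_b) \simeq 0$, and $\ker(\kk_a \to \kk_b)$ is $0$ for $r < b$ and $\Z_2$ for $b \leq r < a$, i.e.\ $\ker(\kk_a \to \kk_b) \simeq \kk_{[b,a)}$; note this kernel is the zero module precisely when $a = b$, which is why the index set for $\ker(f)$ restricts to $a > b$. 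For a summand $0 \to \kk_b$: $\im \simeq 0$, $\ker \simeq 0$, $\coker \simeq \kk_b$. For the summand $\kk_\infty \to \kk_\infty$, which is the identity: $\im \simeq \kk_\infty$, $\ker \simeq 0$, $\coker \simeq 0$. Then I would assemble: summing $\im$ over all summands of Equation~(\ref{eq:f-direct-sums}) gives one copy of $\kk_b$ for each of the $\cM^0_f(a,b)$ summands $\kk_a \to \kk_b$ plus the single $\kk_\infty$; summing $\ker$ gives one copy of $\kk_{[b,a)}$ for each summand $\kk_a \to \kk_b$ with $a > b$; summing $\coker$ gives one copy of $\kk_b$ for each of the $N_f(b)$ summands $0 \to \kk_b$. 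This yields exactly the three displayed formulas.

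\textbf{Main obstacle.} The only genuinely non-formal point is justifying that $\im$, $\ker$ and $\coker$ commute with the (possibly infinite, but here finite) direct sum decomposition of $f$ \emph{as persistence modules}, i.e.\ compatibly with structure maps, not merely pointwise. Since $X$ and $Z$ are finite, $\B(V)$ and $\B(U)$ are finite multisets and the decomposition in Equation~(\ref{eq:f-direct-sums}) is a finite direct sum, so this is just the observation that a direct-sum decomposition $V \simeq \bigoplus V^{(\lambda)}$, $U \simeq \bigoplus U^{(\lambda)}$ intertwining $f$ with $\bigoplus f^{(\lambda)}$ restricts to a direct-sum decomposition of each of the subquotient modules $\ker(f)$, $\im(f)$ and $\coker(f)$ — a routine diagram chase using that the decomposition is natural in $r$. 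Everything else is bookkeeping on index sets. I would also remark, as the paper already notes, that Proposition~\ref{prop:decomposition-f} requires $f$ to be induced by a non-expansive \emph{injective} map; the proof of Proposition~\ref{prop:isomorph-im-ker-coker} inherits that hypothesis (or else one first reduces to that case via the $X^\delta$ trick of Proposition~\ref{prop:injective-map}, under which $\ker$, $\im$ and $\coker$ are only shifted in the birth parameter $b$—but since all these barcodes are controlled by the same $\cM^0_f$, the statement is unaffected).
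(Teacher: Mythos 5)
Your proposal is correct and follows essentially the same route as the paper: the paper also deduces the result from the ladder decomposition of Proposition~\ref{prop:decomposition-f}, invokes commutativity of $\ker$ (resp.\ $\im$, $\coker$) with direct sums, and computes the functor on each summand (e.g.\ $\ker(\kk_a\to\kk_b)=\kk_{[b,a)}$ and $\ker(0\to\kk_b)=0$), writing out only the kernel case and declaring the other two analogous. Your additional observations --- that the $a=b$ summands contribute zero to $\ker(f)$, which explains the index restriction to $a>b$, and that the injectivity hypothesis on $\fbullet$ is inherited from Proposition~\ref{prop:decomposition-f} --- are correct and only make the argument more complete.
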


\begin{proof}
    We prove the  
interval decomposition
    for $\ker(f)$ as the other two are analogous.
    First, notice that, since $a \geq b$, then $\ker(\kk_a\rightarrow \kk_b) = \kk_{[b,a)}$ while $\ker(0\rightarrow \kk_b)=0$ for all $b>0$. 
    Now, by Proposition~\ref{prop:decomposition-f} as well as commutativity of $\ker$ with direct sums, we have the following isomorphisms and equalities
    
    \begin{multline*}
     \ker(f) \simeq 
    \mbox{$\bigg(
    \bigoplus_{b > 0}
    \bigoplus_{a\geq b}  
    \bigoplus_{s\in\llbracket\cM^0_f(a,b)\rrbracket} 
    \ker(\kk_{a} \rightarrow \kk_{b})\bigg) 
    $}
    \\ 
    \qquad \qquad \mbox{$
    = 
    \bigoplus_{b>0} 
    \bigoplus_{a>b}
    \bigoplus_{s\in\llbracket{\cM^0_f(a,b)\rrbracket}} 
    \kk_{[b,a)}
    $}\,.
    \end{multline*}
\end{proof}
In particular, notice that $\cM^0_f$ is determined by the barcodes $\B(\ker(f))$ and $\B(V)$. 
This is proven using the equality $\cM^0_f(a,b) = m^{\ker(f)}([b,a))$ for all $a>b$ and also $\cM^0_f(a,a) = m^{\scst V}(a) - \sum_{b < a} \cM^0_f(a,b)$ for all $a>0$.


\section{Properties and stability of 0-persistence matching diagrams}\label{sec:0-diagrams}

In this section, we study the stability of the matching diagram $D(f_0)$ which, by definition, is equivalent to the stability of $\cM_f^0$. 


\subsection{Relation between $D(f_0)$ and images, kernels and cokernels}

Let us suppose that $f\colon V\to U$ is induced by a 
non-expansive injective map between finite metric spaces $\fbullet\colon X\rightarrow Z$. 
Then, an advantage of using the multiset $D(f_0)$ is that it contains all information from the five multisets $\B(V)$, $\B(U)$, $\B(\ker(f))$, $\B(\im(f))$ and $\B(\coker(f))$. 

Using Propositions~\ref{prop:decomposition-f} and~\ref{prop:isomorph-im-ker-coker} together with the definition of $D(f_0)$ we show the above claim for each case.

\begin{remark}\label{remark:Df)}
Let $f\colon V\to U$
be induced by a 
non-expansive injective map between finite metric spaces.
Let $D(f_0)=(\Sf, \mf)$. 
Then,
\begin{enumerate}
    \item setting $\B(V)=(S^{\scst V}, m^{\scst V})$, we have
    \[
        S^{\scst V}=\{ [0,a) \mbox{ such that }\exists\, (a,b) \in \Sf \mbox{ with } 0< a < \infty\} 
    \]
    and, given $0<a<\infty$, 
    $
        m^{\scst V}(a) = \sum_{b\leq a} \mf((a,b))
    $;     
    \item  similarly, given $\B(U)=(S^{\scst U}, m^{\scst U})$ we have 
    \[
        S^{\scst U}=\{ [0,b) \mbox{ such that } \exists\, (a,b) \in \Sf \mbox{ with } b<\infty \mbox{ and } b \leq a \leq \infty\}
    \]
    and also 
    $
        m^{\scst U}(b) = \sum_{b\leq a\leq \infty} \mf((a,b))
    $;
    \item  given $\B(\ker(f))=(S^{\ker(f)}, m^{\ker(f)})$ we have 
    \[
        S^{\ker(f)}=\{ [b,a) \mbox{ such that } \exists\, (a,b) \in \Sf \mbox{ with } b < a < \infty\}
    \]
    and also $m^{\ker(f)}([b,a)) = \mf((a,b))$;
    \item  given $\B(\im(f))=(S^{\im(f)}, m^{\im(f)})$ we have 
    \[
        S^{\im(f)}=\{ [0,b) \mbox{ such that } \exists \,(a,b) \in \Sf \mbox{ with } b \leq a < \infty\}
    \]
    and also 
    $
        m^{\im(f)}(b) = \sum_{b\leq a < \infty} \mf((a,b))
    $ for $b< \infty$;
    \item given $\B(\coker(f))=(S^{\coker(f)}, m^{\coker(f)})$ we have 
    \[
        S^{\coker(f)}=\{ [0,b) \mbox{ such that } \exists \,(\infty,b) \in \Sf\}
    \]
    and also $m^{\coker(f)}([0,b)) = \mf((\infty,b))$.
\end{enumerate}
\end{remark}

Conversely, we might write $ D(f_0)$ in terms of $\B(\ker(f))$, $\B(V)$ and $\B(\coker(f))$. This is followed by setting 
\begin{multline*}
\mbox{$\Sf = \big\{ 
(a,b) \mbox{ such that } [b,a) \in S^{\ker(f)} \big\} $}
 \\
\hspace{2cm} \mbox{$\bigcup 
 \big\{ (a,a) \mbox{ such that } a<\infty \mbox{ and }  m^{\scst V}(a) > \sum_{b<a}m^{\ker(f)}([b,a))\big\} $}
 \\
 \mbox{$\bigcup \big\{(\infty, b) \mbox{ such that } [0,b) \in S^{\coker(f)}
 \big\}$}
\end{multline*}
and also 
\[
\mf((a,b)) = \begin{cases}
    m^{\ker(f)}([b,a)) & 
    \mbox{ if } b < a < \infty\,, 
    \\
    m^{\scst V}(a) - \sum_{r<a}m^{\ker(f)}([r,a)) & 
    \mbox{ if } b=a < \infty \,,
    \\
    m^{\coker(f)}(b) & 
    \mbox{ if } b<a=\infty\,.
\end{cases}
\]

\subsection{
Stability of $D(f_0)$ induced by an embedding $X\subseteq Z$
}\label{subsec:stability}

In this subsection, we find an upper bound for the stability of $\cM^0_f$.
We do this for the case of an embedding $X\subseteq Z$.

Intuitively, by the stability of $\cM^0_f$ we mean that considering the pair of samples $X \subseteq Z$ and moving their positions slightly, so that we obtain a new pair $X'\subseteq Z'$, then the induced block functions $\cM^0_f$ and $\cM^0_{f'}$ are approximately ``the same'', where $f'\colon V'\to U'$ for 
$V'=\PH_0(X')$ and $U'= \PH_0(Z')$.

In particular, we show the stability of $D(f_0)$, which, by its definition, is equivalent to showing the stability of $\cM^0_f$, but it is easier to formulate.

\begin{theorem}\label{th-stability}
    Consider finite metric spaces $X\subseteq Z$ and $X'\subseteq Z'$ such that $d_{\scst GH}((X,Z),(X',Z')) = \veps$. 
    Then there exists a partial matching $\sigma^{\scst D(f_0)}\colon \Rep D(f_0)\nrightarrow \Rep D(f_0')$ such that 
    \begin{itemize}
        \item for $((a,b), i) \in \coim(\sigma^{\scst D(f_0)})$, writing $\sigma^{\scst D(f_0)}((a,b), i) = ((a',b'),j)$, we have that (either $a=a'=\infty$ or $|a-a'| \leq 2\veps$) and $|b-b'| \leq 2\veps$.
        \item for $((a,b), i) \in \Rep D(f_0) \setminus \coim(\sigma^{\scst D(f_0)})$ then (either $a=\infty$ or $a \leq 2\veps$) and $b \leq 2\veps$.
        \item for $((a',b'), j) \in \Rep D(f_0') \setminus \im(\sigma^{\scst D(f_0)})$ then (either $a'=\infty$ or $a' \leq 2\veps$) and $b' \leq 2\veps$.
    \end{itemize}
   We call such a partial matching a $2\veps$-matching. 
\end{theorem}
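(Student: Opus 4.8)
The strategy is to reduce the stability of $D(f_0)$ to the already-established stability results for $\B(V)$, $\B(U)$, $\B(\ker(f))$, $\B(\im(f))$ and $\B(\coker(f))$ — concretely Propositions~\ref{prop:bottleneck-Hausdorff-stability} and~\ref{prop:bottleneck-ker-im-coker} — using the dictionary of Remark~\ref{remark:Df)} which expresses $D(f_0)$ in terms of $\B(\ker(f))$, $\B(V)$ and $\B(\coker(f))$. First I would invoke Proposition~\ref{prop:bottleneck-Hausdorff-stability} to obtain a $2\veps$-matching $\sigma^{\scst V}\colon\Rep\B(V)\nrightarrow\Rep\B(V')$, Proposition~\ref{prop:bottleneck-ker-im-coker} to obtain a $2\veps$-matching $\sigma^{\ker}\colon\Rep\B(\ker(f))\nrightarrow\Rep\B(\ker(f'))$ on the kernel barcodes, and (applied to $\coker$) a $2\veps$-matching $\sigma^{\coker}\colon\Rep\B(\coker(f))\nrightarrow\Rep\B(\coker(f'))$. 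Note that for $\PH_0$ the interval endpoints are all of the form $[0,c)$, so $\veps$-matchings on these barcodes only move birth coordinates that are identically $0$ and move death coordinates by less than $\veps$ (an unmatched interval $[0,c)$ has $c<\veps$); the $2\veps$ replacing $\veps$ comes from the Gromov–Hausdorff bound.

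\textbf{Assembling the matching.} Using the partition of $\Rep D(f_0)$ according to whether $(a,b)$ has $b<a<\infty$ (kernel part, parametrised by $[b,a)\in S^{\ker(f)}$), $b=a<\infty$ (diagonal/``pure $V$'' part), or $b<a=\infty$ (cokernel part, parametrised by $[0,b)\in S^{\coker(f)}$), I would define $\sigma^{\scst D(f_0)}$ blockwise. On the cokernel part, transport $\sigma^{\coker}$: a point $(\infty,b)$ is sent to $(\infty,b')$ where $[0,b)\mapsto[0,b')$ under $\sigma^{\coker}$, so $a=a'=\infty$ and $|b-b'|\le 2\veps$, and unmatched cokernel intervals $[0,b)$ with $b<2\veps$ give unmatched points $(\infty,b)$ with $b\le 2\veps$ — exactly the required bullet. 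On the kernel part, transport $\sigma^{\ker}$: $(a,b)$ with $[b,a)\in S^{\ker(f)}$ goes to $(a',b')$ where $[b,a)\mapsto[b',a')$, and since a $\veps$-matching of barcodes controls \emph{both} endpoints of matched intervals, $|a-a'|\le 2\veps$ and $|b-b'|\le 2\veps$; an unmatched kernel interval $[b,a)$ satisfies $|a-b|<2\veps$, which bounds the death coordinate $b\le 2\veps$ but \emph{not} directly $a$ — this is where the diagonal part must absorb the slack. For the diagonal part, the multiplicity of $(a,a)$ is $m^{\scst V}(a)-\sum_{r<a}m^{\ker(f)}([r,a))$, so I would use $\sigma^{\scst V}$ together with $\sigma^{\ker}$ to match as many diagonal points $(a,a)$ to $(a',a')$ with $|a-a'|\le 2\veps$ as possible, leaving unmatched only those forced by the two barcode matchings; a counting argument (the two bounds~(\ref{eq:bound1}) as equalities from Proposition~\ref{prop:decomposition-f}) shows the leftover unmatched diagonal points have $a\le 2\veps$.

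\textbf{The main obstacle.} The delicate point is consistency: the three transported matchings must be combinable into a single partial matching of $\Rep D(f_0)$, i.e.\ they must not compete for the same source or target points, and the diagonal/$V$-part must be handled so that the identity $m^{\scst V}(a)=\sum_{b\le a}\cM^0_f(a,b)$ is respected on both sides simultaneously. In particular I expect the hardest step is showing that the unmatched elements of the diagonal part genuinely satisfy $a\le 2\veps$ — this requires a careful bookkeeping argument comparing $\sigma^{\scst V}$ and $\sigma^{\ker}$: if $(a,a)$ is unmatched then either its underlying $V$-interval $[0,a)$ is unmatched by $\sigma^{\scst V}$ (giving $a<2\veps$ directly) or it is matched to some $[0,a')$ but all the ``diagonal slots'' at $a'$ on the $D(f_0')$ side are already consumed by kernel intervals $[b',a')$, which via $\sigma^{\ker}$ trace back to kernel intervals $[b,a)$ on the $D(f_0)$ side, forcing a mismatch in multiplicities that can only occur when $a$ is small. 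I would formalise this by a Hall-type / injectivity argument on the finitely many intervals with fixed (approximate) right endpoint. A cleaner alternative, which I would present if the bookkeeping becomes unwieldy, is the $X^\delta$-perturbation trick from the proof of Proposition~\ref{prop:injective-map}: replace $f$ by $f^\delta$ (and $f'$ by $f'^{\delta}$) to make $\cM^0$ into a genuine \emph{injective} partial matching on both sides, prove the $2\veps$-matching statement there where the diagonal part disappears into the kernel part, and then let $\delta\to 0$, checking that the matching passes to the limit with the endpoint bounds intact. Either way, the routine verification of the three bullet-point inequalities is immediate once the matching is correctly defined; the real content is the combinatorial coherence of the assembly.
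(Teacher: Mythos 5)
There is a genuine gap, and it sits exactly where you predicted the difficulty would be. Your primary, blockwise assembly cannot work as described: transporting a $2\veps$-matching $\sigma^{\ker}$ on the kernel barcodes only guarantees that an \emph{unmatched} interval $[b,a)\in\B(\ker(f))$ has length $|a-b|<2\veps$, which says nothing about the sizes of $a$ and $b$ individually. A kernel interval such as $[100,101)$ with $\veps=1$ may legitimately be left unmatched by the bottleneck matching, yet the corresponding point $(101,100)\in\Rep D(f_0)$ must, by the theorem, either be matched within $2\veps$ or satisfy $a,b\leq 2\veps$. Your plan to ``absorb the slack'' in the diagonal part does not address this, since the problem already occurs for off-diagonal points far from the origin. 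Your fallback names the right device (the $\delta$-perturbation) but applies it in the wrong direction: letting $\delta\to 0$ reproduces the same failure, because for small $\delta$ the intervals $[b,a+\delta)$ of $\B(\ker(f^\delta))$ with $a\approx b$ are short and may again be dropped by the bottleneck matching, leaving $(a,a)$ with large $a$ unmatched. The paper's proof takes $\delta>2\veps$ \emph{fixed and large}: then every interval of $\B(\ker(f^\delta))$ has length at least $\delta>2\veps$, so the $2\veps$-matching between $\B(\ker(f^\delta))$ and $\B(\ker(\fdp))$ is forced to be a bijection, and the bounds on $D(f_0)_{\scst O}$ follow because the shift cancels, $|a-a'|=|(a+\delta)-(a'+\delta)|\leq 2\veps$, while points sent to the residual $[0,\delta)$ intervals get $a=|(a+\delta)-\delta|\leq 2\veps$ and $b\leq 2\veps$. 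No limit is taken, and the diagonal points $(a,a)$ are uniformly absorbed as intervals $[a,a+\delta)$.

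A second point you gloss over: one cannot simply perturb $X$ and $X'$ separately to $X^\delta$ and $X'^{\delta}$, because at scales below $\delta$ all points are isolated and the composite correspondence $\alpha^{\scst X'}\circ\alpha^{\scst X}$ need not be the identity on $X$, so the would-be interleaving maps between $\PH_0(X^\delta)$ and $\PH_0(X'^{\delta})$ fail to compose to the structure maps. The paper instead builds a single correspondence set $Y\subseteq X\times X'$ and equips it with two metrics $\dYd$ and $\dYdp$ (pulled back from $X$ and $X'$ respectively, each shifted by $\delta$), so that the identity on $Y$ induces the required $2\veps$-interleaving of $f^\delta$ with $\fdp$ (Proposition~\ref{prop:interleaving-f-delta}). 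Your treatment of the cokernel part $D(f_0)_\infty$ via $\B(\coker(f))$ does agree with the paper's Proposition~\ref{prop:rep-Dfinfty}, but the heart of the theorem is the finite part, and there the proposal as written does not close.
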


Observe that there is no confusion with $\veps$-matchings between barcodes, since here we refer to partial matchings between matching diagrams.
The proof of this theorem, which is quite technical, is elaborated in detail in 
Subsection~\ref{subsec:th-proof}.

\begin{figure}[ht!]
    \centering
    \begin{subfigure}{0.4\textwidth}
        \includegraphics[width=\textwidth]{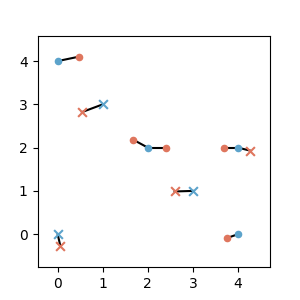}
    \caption{$X\subset Z$ (in red) and $X'\subset Z'$ (in blue).}
    \label{subfig:stability-points}
    \end{subfigure}
    \begin{subfigure}{0.5\textwidth}
    \includegraphics[width=\textwidth]{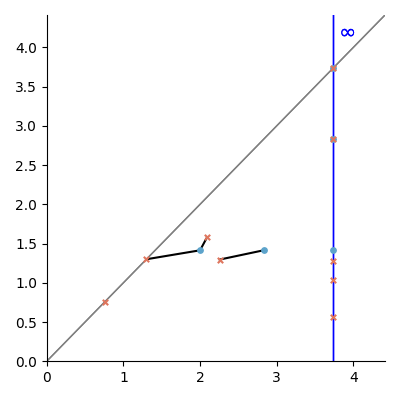}
    \caption{
    $D(f_0)$ (in red) and $D(f_0')$  (in blue).
    }
    \label{subfig:stability-bimodules}
    \end{subfigure}
    \caption{
    Illustration of the stability of matching diagrams.
    On the left, we show two pairs $(X,Z)$ and $(X',Z')$ and the  matching that provides the value of $d_{\scst GH}((X,Z),(X',Z'))$. 
    The matching diagrams $D(f_0)$ and $D(f_0')$ associated to $X\subset Z$  and $X'\subset Z'$, respectively, are depicted on the right with the partial matching $\sigma^{\scst D(f_0)}$.
    }
\end{figure}

\begin{example}\label{ex:stability}
    Consider the pairs $X \subseteq Z$ 
    and $X'\subseteq Z'$ such that $d_{\scst H}(Z, Z') < \veps$ and $d_{\scst H}(X, X') < \veps$ for $\veps \sim 0.5$ depicted in 
    Subfigure~\ref{subfig:stability-points}.
    We plot $Z$ in blue and $Z'$ in red, and we plot the points of $X$ and $X'$ using circles while the remaining points are marked with $\times$ signs. In the same subfigure, we pair the points from $Z$ with the closest ones from $Z'$ by connecting them with lines; this matches points from $X$ with points from $X'$. 
    Now, in Subfigure~\ref{subfig:stability-bimodules}, we plot, in a superposed manner, the matching diagrams $D(f_0)$ and $D(f_0')$, respectively. 
    We can observe that there is a $2\veps$-matching $\sigma^{\scst D(f_0)}\colon
    D(f_0)\nrightarrow D(f_0')$.
    More concretely, we have $D(f_0)=\{\Sf, \mf\}$ and $D(f_0')=\{\Sff, \mff\}$ where the pairs $(a,b)\in \Sf$ and $(a',b')\in \Sff$ range over the values in the table below (where values are rounded to two decimals):
        \[
    \begin{array}{rr|rr|rr}
        a\;\; &    b\;\; &   a'\;\;  &   b'\;\;  &  \;\;\vert a-a'\vert &   \;\;\vert b-b'\vert \\ \hline
    2.00 & 1.41  & 2.09 & 1.58 & 0.09    &    0.17 \\
    2.00 & 1.41 & 1.30 & 1.30  & 0.7    &    0.11 \\
    -\;\; & -\;\; & {\bf 0.75} & {\bf 0.75} & -\;\; & -\;\; \\
   \infty\;\;
    & 2.83 & \infty\;\;
    & 2.83 & \textrm{nan}     &    0.00 
    \\
   \infty \;\;
    & 1.41 & \infty\;\;
    & 1.28 & \textrm{nan}      &    0.13 \\
    \infty\;\;
    & 1.41 & \infty\;\;
    & 1.03 & \textrm{nan}      &    0.38 \\
    -\;\;   & -\;\; & \infty\;\;
    & 0.57 & -\;\;      &    -\;\; 
    \end{array}
    \]
    Further, in this table, in each row, a couple of pairs of values $(a,b)$ and $(a',b')$ are matched, and the differences between their endpoints are shown on the right hand of the table. 
    Notice that the maximum value is $0.75$, so the stability bound is satisfied since $0.75 < 2\cdot 0.5=1$.
    In Subfigure~\ref{subfig:stability-bimodules}, we depict such matching by connecting segments.
\end{example}

Finally, notice that our stability result implies that the ``noise" on $D(f_0)$ lies around the points $(0,0)$ and $(\infty, 0)$.

\subsection{Stability of
$D(f_0)$ induced by set injections $X\hookrightarrow Z$ 
}\label{sec:set-injections-matchings}

We conclude this section by adapting Theorem~\ref{th-stability} to the case of a set injection of finite metric spaces $X\hookrightarrow Z$ that might not be an embedding.
To do this, we extend the definition of the Gromov-Hausdorff distance to pairs of set injections of finite metric spaces.

\begin{example}\label{ex:digits-PCA}
    Consider samples from a handwritten digits dataset~\cite{misc_optical_recognition_of_handwritten_digits_80}. This dataset contains about 380 samples of images for each of the digits from $0$ to $9$. Let $Z$ be the set consisting of the samples labelled by either $0$ or $9$; we plot their PCA projection on two components in Figure~\ref{fig:pca_mnist_09_dim2}.
    On the other hand, we consider the subset $X$ containing those samples labelled by $0$. 
    We denote by $Z(n)$ to be the PCA projection of $Z$ into its $n$ principal components. 
    In an analogous manner, we use the notation $X(n)$. 
    Here, observe that, even though there is a natural set inclusion $X(n) \hookrightarrow Z(n)$ for all $n$, such inclusion might alter the metrics arbitrarily. 
    In Figure~\ref{fig:pca_mnist_09_dimN} we consider the matching between $X(n)$ and $Z(n)$ for various values of $n>0$. In particular, notice that some points are above the diagonal. 
    We observe that smaller values of $n$ lead to more spread points, while, for larger values, the points group around the diagonal. 
    One can interpret this as saying that, the larger the $n$, the more similarities will be between the clusters from $X(n)$ and their counterparts in $Z(n)$.
\end{example}

\begin{figure}[h!]
    \centering
    \includegraphics[width=0.5\linewidth]{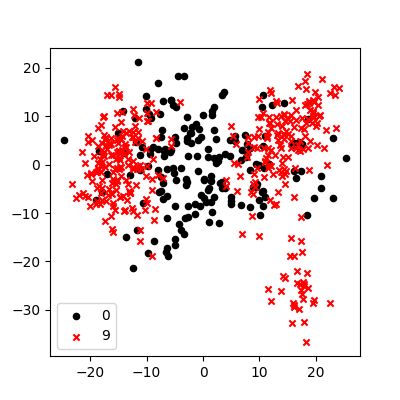}
    \caption{Projection of two principal components of $Z$, where samples are labelled by $0$ or $9$.}
    \label{fig:pca_mnist_09_dim2}
\end{figure}

\begin{figure}[h!]
    \centering
    \includegraphics[width=0.95\linewidth]{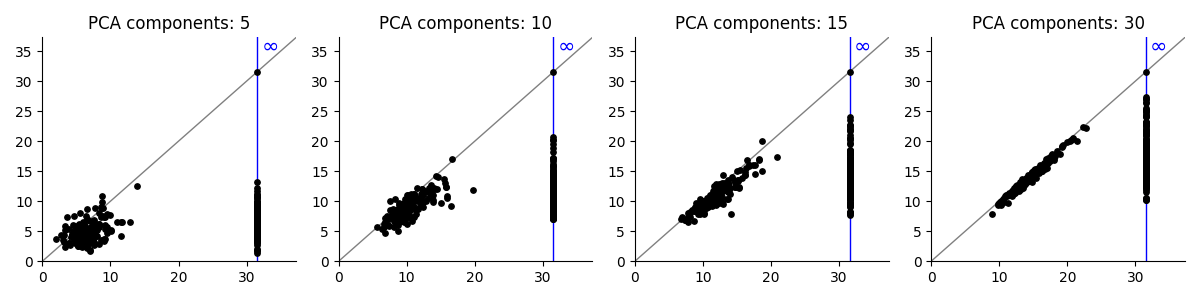}
    \caption{Matching diagram between $X(n)$ and $Z(n)$ for varying $n \in \{ 5, 10, 15, 30\}$.}
    \label{fig:pca_mnist_09_dimN}
\end{figure}

Next, we want to adapt Theorem~\ref{th-stability} to the setup from this section. 
Unfortunately, Definition~\ref{def:gromov-hausdorff-pairs} of Gromov-Hausdorff distance only works for the case of two embeddings $\fbullet\colon X\subseteq Z$ and $\fbullet'\colon X'\subseteq Z'$.
Instead, if $\fbullet\colon X
\hookrightarrow Z$ or $\fbullet'\colon X'\hookrightarrow Z'$ is not an embedding, 
we might formulate stability in terms of assignments when the condition $\#X=\#X'$ is fulfilled.

\begin{definition}\label{def:veps-interleaving-sets}
    Consider metric spaces $X,X',Z,Z'$ together with set injections
    $\fbullet \colon X\hookrightarrow Z$ and $\fbullet'\colon X'\hookrightarrow Z'$. 
    Given $\veps>0$, we say that $\fbullet$ and $\fbullet'$ are $\veps$-interleaved if the following conditions hold:
    \begin{enumerate}
        \item there exist an isomorphism $\alpha^X\colon X\rightarrow X'$
    inducing a $\veps$-interleaving between $V=\PH_0(X)$ and $V'=\PH_0(X')$. That is, there are inclusions
    \[
    \mbox{
    $\alpha^X(\VR_r(X))\subseteq \VR_{r+\veps}(X')$, 
    $(\alpha^X)^{-1}(\VR_r(X'))\subseteq \VR_{r+\veps}(X)$
    }
    \]
    which induce maps $\phi^X\colon V\rightarrow V'(\veps)$ and $\phi^{X'}\colon V'\rightarrow V(\veps)$ that are such that $\phi^{X'}_\veps\circ \phi^X = \rho^V_{2\veps}$ and $\phi^{X}_\veps\circ \phi^{X'} = \rho^{V'}_{2\veps}$.
        \item There exist assignments $\alpha^Z\colon Z\rightarrow Z'$ and  $\alpha^{Z'}\colon Z'\rightarrow Z$ inducing a $\veps$-interleaving between $U=\PH_0(Z)$ and $U'=\PH_0(Z')$ via maps $\psi^Z\colon U\rightarrow U'(\veps)$ and $\psi^{Z'}\colon U'\rightarrow U(\veps)$,
        \item $\alpha^Z \circ \fbullet = \fbullet' \circ \alpha^X$ and $\alpha^{Z'} \circ \fbullet' = \fbullet \circ (\alpha^X)^{-1}$
    \end{enumerate}
\end{definition}

Notice that Definition~\ref{def:veps-interleaving-sets} does not generalize nor is a particular case of 
Definition~\ref{def:gromov-hausdorff-pairs}.
Next, we explain how assignments come up naturally, following Example~\ref{ex:digits-PCA}. 

\begin{example}
    We let $X$ and $Z$ to be the respective metric spaces $X(n)$ and $Z(n)$ (for some $n>0$) as defined in Example~\ref{ex:digits-PCA}. Now, consider metric spaces $X'=(X, d^{X'})$ and $Z'=(Z, d^{Z'})$ whose metrics are equal to $d^X$ and $d^Z$ respectively, up to a perturbation by a parameter $\veps>0$. 
    That is, given $x_1,x_2 \in X$, we have $|d^X(x_1,x_2)-d^{X'}(x_1,x_2)|\leq\veps$ and, given $z_1, z_2 \in Z$, we have $|d^Z(z_1,z_2)-d^{Z'}(z_1,z_2)|\leq\veps$. 
    In this case, $\alpha^X, (\alpha^X)^{-1}, \alpha^{Z}$ and $\alpha^{Z'}$ are given by the underlying set identities. 
    These assignments satisfy the conditions of Definition~\ref{def:veps-interleaving-sets} and so there is a $\veps$-interleaving between $\fbullet\colon X\rightarrow Z$ and $\fbullet'\colon X'\rightarrow Z'$.
\end{example}

Now we are ready to present the main theorem from this section.

\begin{theorem}\label{theorem:stability-general}
    Consider metric spaces $X,X',Z,Z'$ together with set injections 
    $\fbullet\colon X\hookrightarrow Z$ and $\fbullet'\colon X'\hookrightarrow Z'$ that are $\veps$-interleaved.
    Then, there is a $\veps$-matching between $D(f_0)$ and $D(f'_0)$.
\end{theorem}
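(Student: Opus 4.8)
The statement asks for a partial matching between $D(f_0)$ and $D(f'_0)$ obeying the bounds of Theorem~\ref{th-stability} with $2\veps$ replaced by $\veps$. The plan has three steps: reduce to non-expansive injective maps via the $\delta$-shift of the proof of Proposition~\ref{prop:injective-map}; push the $\veps$-interleaving of Definition~\ref{def:veps-interleaving-sets} through $f$ to obtain $\veps$-interleavings of the kernel and cokernel persistence modules; and assemble the diagram matching out of $\veps$-matchings of $\B(\ker f),\B(\ker f')$ and $\B(\coker f),\B(\coker f')$.

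\emph{Reduction.} As in the proof of Proposition~\ref{prop:injective-map}, fix $\delta>\veps$ large enough that the composites $X^\delta\to X\to Z$ and ${X'}^\delta\to X'\to Z'$ are non-expansive, where $X^\delta$ is $X$ with the metric $d_X+\delta$ off the diagonal; this is possible since $X,X'$ are finite. Let $g$ and $g'$ be the induced persistence morphisms $\PH_0(X^\delta)\to\PH_0(Z)$ and $\PH_0({X'}^\delta)\to\PH_0(Z')$. Shifting the metrics of $X$ and $X'$ by the same $\delta$ preserves every inclusion in Definition~\ref{def:veps-interleaving-sets} and leaves $Z,Z'$ and the compatibility identities unchanged, so $g_\bullet,g'_\bullet$ are again $\veps$-interleaved. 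Since $\cM^0_f(a,b)=\cM^0_g(a+\delta,b)$ and $N_f(b)=N_g(b)$, the diagram $D(f_0)$ is the image of $D(g_0)$ under $(a,b)\mapsto(a-\delta,b)$ on points of finite first coordinate and the identity on points $(\infty,b)$, multiplicities preserved, and likewise for $D(f'_0),D(g'_0)$. Because finite first coordinates in $D(g_0),D(g'_0)$ exceed $\delta>\veps$, no $\veps$-matching of these diagrams can leave a point of finite first coordinate unmatched, so translating such a matching by $-\delta$ gives a $\veps$-matching between $D(f_0)$ and $D(f'_0)$. It thus suffices to match $D(g_0)$ with $D(g'_0)$. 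By the choice of $\delta$ and the vanishing of $\cM^0_f(a,b)$ for $b>a$, every point of $D(g_0)$ of finite first coordinate is of the form $(a,b)$ with $b<a$ and $a-b\ge\delta$, so by Proposition~\ref{prop:isomorph-im-ker-coker} it corresponds (with multiplicity) to a bar $[b,a)$ of $\B(\ker g)$ of length at least $\delta$, while the points $(\infty,b)$ correspond to the bars $[0,b)$ of $\B(\coker g)$; the analogous statement holds for $D(g'_0)$.

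\emph{Interleaving the kernel and cokernel.} By condition 3 of Definition~\ref{def:veps-interleaving-sets} and functoriality of $\Ho_0$ applied to the simplicial maps induced by $g_\bullet,g'_\bullet,\alpha^X,\alpha^Z$, the set-level identities $\alpha^Z\circ g_\bullet=g'_\bullet\circ\alpha^X$ and $\alpha^{Z'}\circ g'_\bullet=g_\bullet\circ(\alpha^X)^{-1}$ become commuting squares of persistence morphisms $\psi^Z\circ g=g'(\veps)\circ\phi^X$ and $\psi^{Z'}\circ g'=g(\veps)\circ\phi^{X'}$. Restricting $\phi^X,\phi^{X'}$ to kernel submodules makes sense since $g'(\veps)\circ\phi^X=\psi^Z\circ g$ annihilates $\ker g$ and symmetrically, and the interleaving relations restrict; hence $\ker(g),\ker(g')$ are $\veps$-interleaved. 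Dually, $\psi^Z,\psi^{Z'}$ descend to cokernels since $\psi^Z(\im g)\subseteq\im(g'(\veps))$ and symmetrically; hence $\coker(g),\coker(g')$ are $\veps$-interleaved. By the Isometry Theorem (Proposition~\ref{prop:isometry}), $\dB(\B(\ker g),\B(\ker g'))\le\veps$ and $\dB(\B(\coker g),\B(\coker g'))\le\veps$. As these barcodes are finite, I may pick partial matchings $\sigma^K$ of $\B(\ker g),\B(\ker g')$ and $\sigma^C$ of $\B(\coker g),\B(\coker g')$ whose matched pairs differ by at most $\veps$ in each endpoint and whose unmatched bars have length at most $\veps$. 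Since every bar of $\B(\ker g),\B(\ker g')$ has length at least $\delta>\veps$, $\sigma^K$ is a bijection.

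\emph{Assembly.} Transport $\sigma^K$ along the correspondence between the finite-first-coordinate part of $D(g_0)$ and $\Rep\B(\ker g)$, and $\sigma^C$ along the correspondence between the $(\infty,b)$-part of $D(g_0)$ and $\Rep\B(\coker g)$ (and likewise on the target), and take their union $\sigma^{\scst D(g_0)}\colon\Rep D(g_0)\nrightarrow\Rep D(g'_0)$. Its matched finite pairs $(a,b)\leftrightarrow(a',b')$ satisfy $|a-a'|\le\veps$ and $|b-b'|\le\veps$; its matched infinite pairs have both first coordinates $\infty$ and second coordinates differing by at most $\veps$; it leaves no point of finite first coordinate unmatched; and any unmatched $(\infty,b)$ has $b\le\veps$. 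Thus $\sigma^{\scst D(g_0)}$ is a $\veps$-matching of diagrams, and undoing the $\delta$-translation gives the required $\veps$-matching between $D(f_0)$ and $D(f'_0)$. I expect the reduction to be the delicate part: one must check simultaneously that the $\delta$-shift keeps all the data of Definition~\ref{def:veps-interleaving-sets} and that it moves the finite part of the diagram more than $\veps$ from the diagonal, which is precisely what lets the crude union of $\sigma^K$ and $\sigma^C$ satisfy the constraint that unmatched points lie near $(0,0)$ and $(\infty,0)$; the rest is functoriality of $\Ho_0$ and the Isometry Theorem.
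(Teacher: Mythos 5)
Your proposal is correct and follows essentially the same route as the paper: replace the domain by a $\delta$-shifted copy (your $X^\delta$ is isometric, via the bijection $\alpha^X$, to the paper's $Y^\delta$) so that the induced morphisms $g,g'$ are non-expansive injections, transport the $\veps$-interleaving of Definition~\ref{def:veps-interleaving-sets} to $g,g'$ and hence to their kernels and cokernels, identify the finite and infinite parts of the shifted diagrams with $\B(\ker g)$ and $\B(\coker g)$, and use $\delta>\veps$ to force the kernel matching to be a bijection before translating back. The key observations you flag as delicate (preservation of the interleaving under the $\delta$-shift, $\ker(g_0)=0$ so all kernel bars have length $\geq\delta$) are exactly the ones the paper checks.
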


The proof of this theorem is given in Subsection~\ref{subsec:5.2} for being rather technical.


    \section{
    Proof of Proposition~\ref{prop:bottleneck-ker-im-coker},
    and Theorems~\ref{th-stability} and~\ref{theorem:stability-general}}\label{sec:proofs}

We devote a dedicated section for the proofs of Proposition~\ref{prop:bottleneck-ker-im-coker},
and Theorems~\ref{th-stability} and~\ref{theorem:stability-general}
for being rather technical.


    \subsection{
    Proof of Proposition~\ref{prop:bottleneck-ker-im-coker}}
    \label{subsec:proof-stability-ker-im-coker}

    The aim of this subsection is to show the stability of $\im(f)$, $\ker(f)$ and $\coker(f)$. 
    The original result of this fact can be found in~\cite{kernels-images-cokernels}.
    However, the Stability Theorem shown in~\cite{kernels-images-cokernels} is formulated in terms of pairs of continuous functions $g,g'\colon L\rightarrow \R$ given an underlying topological space $L$ and such that $||g-g'||\leq\veps$. 
    Thus, we would need to adapt the existing Stability Theorem to the Gromov-Hausdorff distance and the case of persistence morphisms induced by an inclusion.
    Instead, we choose to prove such stability by a simple argument, which aids in making the article self-contained. 
      
    To start, we define the interleaving distance between persistence morphisms. 
    Let $f\colon V\rightarrow U$ and $f'\colon V'\rightarrow U'$ be a pair of persistence 
    morphisms.
    We say that $f$ and $f'$ are $\veps$-interleaved if there exists four persistence 
    morphisms, 
    $\phi\colon V\rightarrow V'(\veps)$, $\phi'\colon V'\rightarrow \Vveps$, $\psi\colon U\rightarrow U'(\veps)$ and $\psi'\colon U'\rightarrow \Uveps$ such that:
     \begin{itemize}
         \item 
       $\phi'_{\veps} \circ \phi=\rho^{\scst V}_{2\veps}$, $\phi_{\veps} \circ \phi'=\rho^{\scst V'}_{2\veps}$,
    \item $\psi'_{\veps} \circ \psi=\rho^{\scst U}_{2\veps}$,
    $\psi_{\veps} \circ \psi'=\rho^{\scst U'}_{2\veps}$,
   \item  $
   \psi \circ f=f'_{\veps} \circ \phi$ and $\psi' \circ f'=\fveps \circ 
   \phi'$.
    \end{itemize}
     Then the interleaving distance between $f$ and $f'$ is:
    \[
    \dI(f,f')=\inf \big\{\, \veps\geq 0 \,\mid\, f \mbox{ and } f' \mbox{ are $\veps$-interleaved\,}\big\}\,.
    \]
    Now, assuming that $f$ and $f'$ are $\veps$-interleaved, we have the following result. 
    \begin{proposition}\label{prop:interleaving-im-ker-coker}
        Given $\veps = \dI(f,f')$, then 
        \begin{align*}
            \dI(\im(f), \im(f')) & \leq \veps\,, \\
            \dI(\ker(f), \ker(f')) & \leq \veps\,, \mbox{ and } \\
            \dI(\coker(f), \coker(f')) & \leq \veps\,. 
        \end{align*}
    \end{proposition}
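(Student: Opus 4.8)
The plan is to show directly that each of the three interleavings for $\im$, $\ker$ and $\coker$ can be assembled from the four given interleaving morphisms $\phi,\phi',\psi,\psi'$ by restriction and by passing to quotients. Concretely, I would first recall the functoriality setup: the commuting squares $\psi\circ f = f'_\veps\circ\phi$ and $\psi'\circ f' = f_\veps\circ\phi'$ (together with the two shift-composition identities $\phi'_\veps\circ\phi = \rho^V_{2\veps}$, etc.) say precisely that $\phi$, $\phi'$, $\psi$, $\psi'$ are morphisms of the relevant "arrow objects." Everything then follows because the operations $\im$, $\ker$, $\coker$ are functorial on morphisms of persistence modules that fit into such commuting squares.

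For the image: observe that $\psi$ restricts to a persistence morphism $\im(\psi)\colon \im(f)\to \im(f'_\veps) = \im(f')(\veps)$, because $\psi(\im(f_r)) = \psi(f_r(V_r)) = f'_{r+\veps}(\phi_r(V_r)) \subseteq \im(f'_{r+\veps})$ using the commuting square. Symmetrically $\psi'$ restricts to $\im(f')\to \im(f)(\veps)$. The two shift-composition identities for $\psi,\psi'$ restrict verbatim to give $\im(\psi')_\veps\circ\im(\psi) = \rho^{\scst U}_{2\veps}\vert_{\im(f)} = \rho^{\im(f)}_{2\veps}$ and the mirror identity, since $\rho^{\im(f)}$ is by definition the restriction of $\rho^{\scst U}$. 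Hence $\im(f)$ and $\im(f')$ are $\veps$-interleaved, so $\dI(\im(f),\im(f'))\le\veps$. For the kernel: dually, $\phi$ restricts to $\ker(f)\to\ker(f')(\veps)$ because $f'_{r+\veps}(\phi_r(v)) = \psi_r(f_r(v)) = 0$ whenever $v\in\ker(f_r)$; symmetrically for $\phi'$; and the shift-composition identities for $\phi,\phi'$ restrict to $\ker(f)$ since $\rho^{\ker(f)}$ is the restriction of $\rho^{\scst V}$. This gives $\dI(\ker(f),\ker(f'))\le\veps$.

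For the cokernel: $\psi$ descends to the quotients, sending $\coker(f)_r = U_r/\im(f_r)$ to $\coker(f')_{r+\veps} = U'_{r+\veps}/\im(f'_{r+\veps})$, because $\psi_r(\im(f_r))\subseteq\im(f'_{r+\veps})$ by the same computation as in the image case, so $\psi_r$ is well defined on the quotient; similarly $\psi'$ descends. The induced maps commute with the quotient structure maps $\rho^{\coker(f)}$ (which are by definition induced by $\rho^{\scst U}$), and the shift-composition identities for $\psi,\psi'$ pass to the quotients because they hold already on $U$ and $U'$. Thus $\coker(f)$ and $\coker(f')$ are $\veps$-interleaved and $\dI(\coker(f),\coker(f'))\le\veps$. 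I would finish by noting that, since these constructions work for every $\veps$ for which $f$ and $f'$ are $\veps$-interleaved, taking the infimum over such $\veps$ gives the stated bounds against $\dI(f,f')$.

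The main obstacle, and the only place that needs genuine care rather than diagram-chasing, is verifying that the restricted/descended morphisms are themselves \emph{persistence} morphisms, i.e. that they commute with the structure maps of $\im$, $\ker$, $\coker$ — this is immediate from the fact that those structure maps are defined as restrictions or quotients of $\rho^{\scst V}$ and $\rho^{\scst U}$, so commutativity is inherited — and that the composites land where claimed after the $\veps$-shift, which requires only unwinding that $\im(f'_\veps) = \im(f')(\veps)$ and similarly for $\ker$ and $\coker$. No convergence or decomposition input is needed; the argument is purely formal.
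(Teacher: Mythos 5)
Your proposal is correct and follows essentially the same route as the paper's proof: restrict $\psi,\psi'$ to images, restrict $\phi,\phi'$ to kernels, and descend $\psi,\psi'$ to cokernels, checking in each case that the interleaving identities are inherited because the structure maps of $\im$, $\ker$, $\coker$ are restrictions or quotients of those of $U$ and $V$. Your closing remark about taking the infimum over admissible $\veps$ is a small extra care the paper leaves implicit.
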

    
    \begin{proof}
    Let us see the first inequality. 
   The commutativity relation $\psi\circ f = f'_{\veps}\circ \phi$ implies 
   $\psi(\im(f)) \subseteq \im(f')(\veps)$. 
   Also, $\psi'\circ f' = \fveps\circ \phi'$ 
   implies 
   $\psi'(\im(f')) \subseteq \im(f)(\veps)$. 
   Hence, $\psi$ and $\psi'$ restrict to an $\veps$-interleaving between $\im(f)$ and $\im(f')$.
\\
    Next, we consider the following commutative diagram
   with exact rows
    \[
    \begin{tikzcd}
        \im(f) 
        \ar[r, hookrightarrow] 
        \ar[d, "\psi_{|\im(f)}"] 
        & 
        U 
        \ar[r, twoheadrightarrow] 
        \ar[d, "\psi"] 
        & 
        \coker(f) 
        \ar[d, "\overline{\psi}"] 
        \\
        \im(f')(\veps) 
        \ar[r, hookrightarrow]  
        & 
        U'(\veps) 
        \ar[r, twoheadrightarrow] 
        & 
        \coker(f')(\veps)
    \end{tikzcd}
    \]
    where the arrow $\overline{\psi}$ is such that it commutes in the diagram. 
    By a similar commutativity argument, one can define 
    $    \overline{\psi'}\colon \coker(f') \rightarrow \coker(f)(\veps)
    $. 
    Also, since 
    $
    \psi'_{\veps}\circ \psi = \rho^{\scst U}_{2\veps}
    $, 
    it follows that 
    $    \overline{\psi'_{\veps}} \circ \overline{\psi}     
    = \rho^{\coker(f)}_{2\veps}
    $. 
    Similarly, we also have 
    $    \overline{\psi^{\,\veps}} \circ \overline{\psi'}
    = \rho^{\coker(f')}_{2\veps}
    $.
    Hence, $\coker(f)$ and $\coker(f')$ are $\veps$-interleaved.
    \\
    Finally, by equalities 
    $
    \psi\circ f = f'_{\veps}\circ \phi
    $ 
    and 
    $
    \psi'\circ f' = \fveps\circ \phi'
    $ 
    we have inclusions 
    $
    \phi(\ker(f)) \subseteq \ker(f')(\veps)
    $ and 
    $    \phi'(\ker(f'))\subseteq \ker(f)(\veps)$. 
    Altogether, $\phi$ and $\phi'$ restrict to an $\veps$-interleaving between $\ker(f)$ and $\ker(f')$.
    \end{proof}

Next, we adapt Proposition~\ref{prop:interleaving-Hausdorff-stability} to persistence morphisms induced by inclusions.   
In particular, the following result works for 0-persistent homology.
    \begin{proposition}\label{prop:interleaving-f-fprime}
    Let $X\subseteq Z$ and $X'\subseteq Z'$ be finite metric spaces. 
    Let $V=\PH_0(X)$, 
    $V'=\PH_0(X')$,
    $U=\PH_0(Z)$
    and
    $U'=\PH_0(Z')$.
    Let $f\colon V\to U$ and $f'\colon V'\to U'$ be the persistence morphisms induced by the inclusions. 
    Then, 
    $
    \dI(f, f') \leq 2 d_{\scst GH}((X,Z), (X',Z'))
    $.
    \end{proposition}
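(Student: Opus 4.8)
The plan is to exhibit, for each $\veps > d_{\scst GH}((X,Z),(X',Z'))$, an explicit $2\veps$-interleaving between $f$ and $f'$ that is built from a single pair of vertex-level assignments; passing to the infimum over such $\veps$ then gives $\dI(f,f')\leq 2\,d_{\scst GH}((X,Z),(X',Z'))$. Fix such an $\veps$ and, using Definition~\ref{def:gromov-hausdorff-pairs}, choose a metric space $M$ together with isometric embeddings $\gammaZ\colon Z\hookrightarrow M$ and $\gammaZp\colon Z'\hookrightarrow M$ realizing $d^{\scst M}_{\scst H}(\gammaZ(X),\gammaZp(X'))<\veps$ and $d^{\scst M}_{\scst H}(\gammaZ(Z),\gammaZp(Z'))<\veps$. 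The first step is to build an assignment $\eta\colon Z\to Z'$ with the crucial property $\eta(X)\subseteq X'$: for $x\in X$ pick $\eta(x)\in X'$ with $d^{\scst M}(\gammaZ(x),\gammaZp(\eta(x)))<\veps$, available from the first Hausdorff bound, and for $z\in Z\setminus X$ pick $\eta(z)\in Z'$ with $d^{\scst M}(\gammaZ(z),\gammaZp(\eta(z)))<\veps$, available from the second bound. Symmetrically, define $\eta'\colon Z'\to Z$ with $\eta'(X')\subseteq X$ and $d^{\scst M}(\gammaZp(w),\gammaZ(\eta'(w)))<\veps$ for all $w\in Z'$.

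The second step is a routine triangle-inequality computation in $M$, combined with $d^X=d^Z|_X$ and $d^{X'}=d^{Z'}|_{X'}$ (which hold because $X\subseteq Z$ and $X'\subseteq Z'$): for all $z_1,z_2\in Z$ one gets $d^{Z'}(\eta(z_1),\eta(z_2))\leq d^Z(z_1,z_2)+2\veps$, and likewise for $\eta|_X\colon X\to X'$, for $\eta'$, and for $\eta'|_{X'}$. Hence $\eta$ induces graph morphisms $\VR_r(Z)\to\VR_{r+2\veps}(Z')$ and $\eta|_X$ induces $\VR_r(X)\to\VR_{r+2\veps}(X')$ for every $r\geq 0$, and these fit into a square that commutes strictly with the inclusions $\VR_r(X)\hookrightarrow\VR_r(Z)$ and $\VR_{r+2\veps}(X')\hookrightarrow\VR_{r+2\veps}(Z')$ precisely because $\eta(X)\subseteq X'$. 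Applying $\Ho_0$ and assembling over $r$ produces persistence morphisms $\phi\colon V\to V'(2\veps)$, $\phi'\colon V'\to V(2\veps)$, $\psi\colon U\to U'(2\veps)$, $\psi'\colon U'\to U(2\veps)$ satisfying $\psi\circ f=f'_{2\veps}\circ\phi$ and $\psi'\circ f'=f_{2\veps}\circ\phi'$, the commutativity being inherited from that of the filtered graphs.

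It then remains to verify the interleaving identities. For any $z\in Z$, the triangle inequality in $M$ gives $d^Z(z,\eta'\eta(z))\leq 2\veps$, so $z$ and $\eta'\eta(z)$ lie in the same connected component of $\VR_{r+4\veps}(Z)$ for all $r\geq 0$; therefore $\psi'_{2\veps}\circ\psi=\rho^{\scst U}_{4\veps}$, and symmetrically $\psi_{2\veps}\circ\psi'=\rho^{\scst U'}_{4\veps}$, $\phi'_{2\veps}\circ\phi=\rho^{\scst V}_{4\veps}$ and $\phi_{2\veps}\circ\phi'=\rho^{\scst V'}_{4\veps}$. Thus $f$ and $f'$ are $2\veps$-interleaved, so $\dI(f,f')\leq 2\veps$, and taking the infimum over all admissible $\veps$ yields the claim. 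The only genuinely delicate point — the remaining estimates being the same metric bookkeeping already used for Proposition~\ref{prop:interleaving-Hausdorff-stability} — is that one single assignment $\eta$ must realize the $Z$-side interleaving and at the same time restrict to an $X$-side interleaving; this is exactly what the maximum in Definition~\ref{def:gromov-hausdorff-pairs} supplies, and the argument would break if one only controlled $d^{\scst M}_{\scst H}(\gammaZ(Z),\gammaZp(Z'))$.
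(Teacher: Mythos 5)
Your proposal is correct and follows essentially the same route as the paper: construct a single assignment $Z\to Z'$ that restricts to $X\to X'$ (and its counterpart in the other direction) from a common ambient space $M$ realizing the Gromov--Hausdorff bound, check the $+2\veps$ distance distortion by the triangle inequality, and pass to the induced graph and persistence morphisms to obtain a $2\veps$-interleaving of $f$ and $f'$. The only differences are cosmetic refinements (working with $\veps$ strictly above the infimum and spelling out the composite identities such as $\psi'_{2\veps}\circ\psi=\rho^{\scst U}_{4\veps}$, which the paper leaves as ``by construction'').
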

    
    \begin{proof}
        Let $\veps = d_{\scst GH}((X,Z), (X',Z'))$. 
        There exists a metric space $M$ together with a pair of isometries $\gamma_{\scst Z}\colon Z\hookrightarrow M$ and $\gamma_{\scst Z'}\colon Z'\hookrightarrow M$ such that $\dHM(\gamma_{\scst Z}(Z), \gamma_{\scst Z'}(Z')) \leq \veps$ and $\dHM(\gamma_{\scst Z}(X), \gamma_{\scst Z'}(X')) \leq \veps$.
        Thus, we consider an assignment 
        $
        \alpha^{\scst X}\colon X\rightarrow X'
        $ 
        sending a point $x\in X$ to 
        $
        \alpha^{\scst X}(x)\in X'
        $ 
        such that $
        \dM(\gamma_{\scst Z}(x),\gamma_{\scst Z'}(\alpha^{\scst X}(x)
        ))\leq \veps
        $. 
        Then, we extend $\alpha^{\scst X}$ to an assignment $\alpha^{\scst Z}\colon Z\rightarrow Z'$ such that 
        $
        \alpha^{\scst Z}_{\vert \scst X} = \alpha^{\scst X}
        $ 
        and that 
        $
        \dM(\gamma_{\scst Z}(z),\gamma_{\scst Z'}(\alpha^{\scst Z}(z)
        ))\leq \veps
        $ for all $z \in Z$. 
        \\
        Similarly, we define an assignment $\alpha^{{\scst Z}'}\colon Z'\rightarrow Z$ such that 
        $\dM(\gamma_{\scst Z}(\alpha^{{\scst Z}'}(z'))$, $ \gamma_{\scst Z'}(z'))\leq \veps$ for all $z' \in Z'$ and  $\alpha^{{\scst Z}'}(X') \subseteq X$, so we define $\alpha^{{\scst X}'}=\alpha^{{\scst Z}'}_{\vert \scst X'}$. 
        \\
        Such assignments determine well-defined morphisms of graphs,   
        $
        \alpha^{\scst Z}\colon \VR_r(Z) $ $\rightarrow \VR_{r+2\veps}(Z')$ and $\alpha^{{\scst Z}'}\colon \VR_r(Z') \rightarrow \VR_{r+2\veps}(Z)$,
        for all $r\geq 0$,
        where, for example, given an edge $[u,v] \in \VR_r(Z)$ we have $
        \alpha^{\scst Z}([u,v]) = [
        \alpha^{\scst Z}(u), \alpha^{\scst Z}(v)] \in \VR_{r+2\veps}(Z')$; here notice that we have used that $\gamma_{\scst Z}$ is an isometry together with the triangle inequality  on $\dM$:
        \begin{multline*}
             d^{\scst Z'}(\alpha^{\scst Z}(u), \alpha^{\scst Z}(v)) = 
        \dM(\gamma_{\scst Z'}(\alpha^{\scst Z}(u)), \gamma_{\scst Z'}(\alpha^{\scst Z}(v))) \\
        \leq \dM(\gammaZp(\alpha^{\scst Z}(u)), \gamma_{\scst Z}(u)) + \dM(\gamma_{\scst Z}(u),\gamma_{\scst Z}(v)) + \dM(\gamma_{\scst Z}(v),\gamma_{\scst Z'}(\alpha^{\scst Z}(v))) \\ \leq \veps + r + \veps = r+2\veps\,.
         \end{multline*}
         Further, notice that $\alpha^{\scst X}$ and $\alpha^{{\scst X}'}$
        are such that 
        $
        \alpha^{\scst X}(\VR_r(X)) \subseteq \VR_{r+2\veps}(X')
        $ and 
        $
        \alpha^{{\scst X}'}(\VR_r(X')) \subseteq \VR_{r+2\veps}(X)$.
\\     
        Altogether, by construction,
        $\alpha^{\scst X}$ and $\alpha^{\scst Z}$        
        induce 
        persistence 
        morphisms,
        $\phi^{\scst X}\colon V\rightarrow V'(2\veps)$ and 
        $\psi^{\scst Z}\colon U\rightarrow U'(2\veps)$, such that 
        $\psi^{\scst Z} \circ f=f_{2\veps}'\circ \phi^{\scst X}$.
        Similarly, $\alpha^{{\scst X}'}$ and $\alpha^{{\scst Z}'}$ induce persistence morphisms,
         $
         \phi^{\scst X'}\colon V'\rightarrow V(2\veps)
         $ and $
         \psi^{\scst Z'}\colon U'\rightarrow U(2\veps)$, such that $\psi^{\scst Z'} \circ f'=f_{2\veps} \circ \phi^{\scst X'}$. 
         \\
         Also, by construction of $\alpha^{{\scst Z}}$ and $\alpha^{{\scst Z}'}$, we have
         the other equalities that make that $f$ and $f'$ are $2\veps$-interleaved, concluding that 
         $\dI(f,f')\leq 2\veps$.
    \end{proof}

Now we are ready to prove Proposition~\ref{prop:bottleneck-ker-im-coker}.
    
\begin{proof}[of Proposition~\ref{prop:bottleneck-ker-im-coker}]
 First, since $\veps = d_{\scst GH}((X,Z), (X',Z'))$ then, by Proposition~\ref{prop:interleaving-f-fprime}, we have that $\dI(f,f')\leq 2\veps$. 
 Now, we obtain the inequalities in the statement of the proposition directly by applying Proposition~\ref{prop:interleaving-im-ker-coker}.
\end{proof}


\subsection{Proof of Theorem~\ref{th-stability}}\label{subsec:th-proof}

As the subsection indicates, here our aim is to prove Theorem~\ref{th-stability}. 
To start, we consider two sub-multisets from $D(f_0)=(\Sf, \mf)$.
We denote by $D(f_0)_{\scst O}$ the sub-multiset of $D(f_0)$ given by the pairs $(a,b) \in \Sf$ with $a<\infty$ and with multiplicity $\mf((a,b))$. 
On the other hand, we denote by $D(f_0)_\infty$ the multiset of pairs $(\infty,b) \in \Sf$ with multiplicity $\mf((\infty,b))$. 
In particular, one has the disjoint union $D(f_0)=D(f_0)_{\scst O} \sqcup D(f_0)_\infty$.  
We prove Theorem~\ref{th-stability} based on the following two results.

\begin{proposition}\label{prop:rep-Dfinfty}
    Given $\veps = d_{\scst GH}((X,Z),(X',Z'))$, there exists a $2\veps$-matching
    \[
    \sigmainf\colon \Rep D(f_0)_\infty \nrightarrow \Rep D(f_0')_\infty\,.
    \]    
\end{proposition}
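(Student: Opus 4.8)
By Remark~\ref{remark:Df)}(5), the multiset $D(f_0)_\infty$ encodes exactly the barcode $\B(\coker(f))$ under the correspondence $(\infty,b)\leftrightarrow [0,b)$, with matching multiplicities; likewise $D(f_0')_\infty$ encodes $\B(\coker(f'))$. So a $2\veps$-matching $\sigmainf\colon \Rep D(f_0)_\infty \nrightarrow \Rep D(f_0')_\infty$ in the sense required by Theorem~\ref{th-stability} (namely: matched pairs have $|b-b'|\le 2\veps$, and unmatched pairs on either side have $b\le 2\veps$) is literally the same data as a $2\veps$-matching between the barcodes $\B(\coker(f))$ and $\B(\coker(f'))$ — here one uses that all intervals in these barcodes are of the form $[0,b)$, so ``$|a-a'|<\veps$ at the left endpoints'' is automatic and the $\veps$-matching condition collapses to the condition on the right endpoints $b$. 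Thus it suffices to produce a $2\veps$-matching between $\B(\coker(f))$ and $\B(\coker(f'))$, i.e. to bound the bottleneck distance $\dB(\B(\coker(f)),\B(\coker(f')))\le 2\veps$.

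\textbf{Carrying this out.} First I would invoke Proposition~\ref{prop:bottleneck-ker-im-coker} directly: with $\veps = d_{\scst GH}((X,Z),(X',Z'))$ it gives $\dB(\B(\coker(f)),\B(\coker(f')))\le 2\veps$, and hence, by definition of the bottleneck distance, for every $\eta>0$ there is a $(2\veps+\eta)$-matching between these two barcodes. Since the barcodes are finite (as $X,Z,X',Z'$ are finite metric spaces, all modules are p.f.d. with finitely many intervals), one can pass to the limit $\eta\to 0$ and extract an actual $2\veps$-matching: the space of partial matchings between two fixed finite multisets is finite, so some single partial matching works for arbitrarily small $\eta$, and the closed inequalities $|b-b'|\le 2\veps$ (resp.\ $b\le 2\veps$) then hold in the limit. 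Finally I would translate this barcode matching back through the bijection of Remark~\ref{remark:Df)}(5) to obtain $\sigmainf\colon \Rep D(f_0)_\infty \nrightarrow \Rep D(f_0')_\infty$ and check that the three bullet conditions of a $2\veps$-matching in Theorem~\ref{th-stability}, restricted to the $\infty$-part, are exactly the transported conditions.

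\textbf{Main obstacle.} There is no deep obstacle here — the content is entirely in Proposition~\ref{prop:bottleneck-ker-im-coker}, which is proved separately in Subsection~\ref{subsec:proof-stability-ker-im-coker}. The only point requiring care is the passage from ``bottleneck distance $\le 2\veps$'' (an infimum, a priori not attained, over $\veps'$-matchings with $\veps'>2\veps$) to the existence of an \emph{exact} $2\veps$-matching with non-strict inequalities; this is where finiteness of the barcodes is used and should be stated explicitly. A secondary bookkeeping point is to verify that the notion of ``$\veps$-matching between barcodes'' with strict inequalities and the notion of ``$2\veps$-matching between $\infty$-diagrams'' with non-strict inequalities line up correctly — this is why one works with the closed bound $2\veps$ and the limiting argument, rather than trying to match the strict-inequality definitions directly.
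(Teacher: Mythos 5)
Your proposal is correct and follows essentially the same route as the paper: both transport the problem through the bijection $((\infty,b),i)\leftrightarrow([0,b),j)$ between $\Rep D(f_0)_\infty$ and $\Rep\B(\coker(f))$ and then invoke the bound $\dB(\B(\coker(f)),\B(\coker(f')))\leq 2\veps$ (the paper cites Propositions~\ref{prop:isometry} and~\ref{prop:interleaving-im-ker-coker}, which together are exactly your Proposition~\ref{prop:bottleneck-ker-im-coker}). Your explicit finiteness argument for extracting an exact $2\veps$-matching with closed inequalities from the infimum is a detail the paper's proof glosses over, and is a worthwhile addition.
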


\begin{proof}
    To start, we recall that there are a couple of bijections $\mu\colon\Rep D(f_0)_\infty $ $\rightarrow \Rep \B(\coker(f))$  given by sending $((\infty,b),i) \in \Rep D(f_0)_\infty$ to $([0,b),j) \in \Rep \B(\coker(f))$ and $\mu'\colon\Rep D(f_0')_\infty \rightarrow \Rep \B(\coker(f'))$ which is analogous. 
    Now, by Propostion~\ref{prop:isometry} and Propostion~~\ref{prop:interleaving-im-ker-coker}, we have $\dB(\coker(f),\coker(f')) \leq 2\veps$. 
    Consequently, there exists a partial matching $\sigma^{\coker}\colon \Rep \B(\coker(f)) \nrightarrow \Rep \B(\coker(f'))$ that sends $([0,b),i)$ to $([0,b'), j)$ with $|b-b'| \leq 2\veps$ or, given $([0,b),i) \in \Rep \B(\coker(f))$ unmatched, then $b \leq 2\veps$ and given $([0,b'),i) \in \Rep \B(\coker(f'))$ unmatched then $b' \leq 2\veps$.
\end{proof}

The second result that we need requires more work, and we delay the proof for later. 

\begin{proposition}\label{prop:rep-DfO}
    Given $\veps = d_{\scst GH}((X,Z),(X',Z'))$, there exists a $2\veps$-matching
    \[
    \sigmaO\colon\Rep D(f_0)_{\scst O} \nrightarrow \Rep D(f_0')_{\scst O}\,.
    \]    
\end{proposition}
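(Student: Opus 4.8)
The plan is to relate $D(f_0)_{\scst O}$ to a barcode-type object whose stability is already available, and then transport a bottleneck matching through that identification. Recall from Remark~\ref{remark:Df)} that $D(f_0)_{\scst O}$ encodes exactly the data of $\B(V)$ together with $\B(\ker(f))$: for $b<a$ the multiplicity $\mf((a,b))$ equals $m^{\ker(f)}([b,a))$, while the diagonal points $(a,a)$ carry the leftover multiplicity $m^{\scst V}(a)-\sum_{b<a}m^{\ker(f)}([b,a))$. So $D(f_0)_{\scst O}$ is essentially the multiset union of $\B(\ker(f))$ (with interval $[b,a)$ drawn as the off-diagonal point $(a,b)$) and of the part of $\B(V)$ that is ``not used up'' by $\ker(f)$ (drawn on the diagonal). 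The stability inputs we have are: $\dB(\B(\ker(f)),\B(\ker(f')))\le 2\veps$ from Proposition~\ref{prop:bottleneck-ker-im-coker}, and $\dB(\B(V),\B(V'))\le 2\veps$ from Proposition~\ref{prop:bottleneck-Hausdorff-stability} (applied to $X,X'$, noting $d_{\scst GH}(X,X')\le \veps$).

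First I would fix $\veps$-matchings realizing these two bottleneck bounds: $\sigma^{\ker}\colon \Rep\B(\ker(f))\nrightarrow\Rep\B(\ker(f'))$ and $\sigma^{\scst V}\colon\Rep\B(V)\nrightarrow\Rep\B(V')$, each a $2\veps$-matching in the barcode sense. The off-diagonal part of $\sigmaO$ I would simply define to be $\sigma^{\ker}$, reading an interval $[b,a)$ as the point $(a,b)$: since $|b-b'|\le 2\veps$ and $|a-a'|\le 2\veps$ for matched intervals, and an unmatched $[b,a)$ has $a-b<2\veps$ hence $a\le 2\veps$ and $b\le 2\veps$, the required inequalities of Theorem~\ref{th-stability} hold on these points. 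For the diagonal points $(a,a)$, I would use $\sigma^{\scst V}$: a diagonal point of $D(f_0)_{\scst O}$ at $(a,a)$ corresponds to a copy of $[0,a)$ in $\B(V)$ that is not matched into $\B(\ker(f))$. The subtlety is that $\sigma^{\scst V}$ matches all of $\B(V)$, not just the ``diagonal'' copies, so I need a compatibility/cancellation argument: the copies of $[0,a)$ in $\B(V)$ that $\sigma^{\ker}$ accounts for should be matched by $\sigma^{\scst V}$ to copies of $[0,a')$ in $\B(V')$ that $\sigma^{\ker}{}'$ (the corresponding matching for $f'$) accounts for, so that the leftover diagonal multiplicities correspond under $\sigma^{\scst V}$ modulo a controlled error.

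The cleanest way to handle this is to build a single $2\veps$-matching on the \emph{disjoint union} $\Rep\B(V)\sqcup\Rep\B(\ker(f))$ versus $\Rep\B(V')\sqcup\Rep\B(\ker(f'))$ out of $\sigma^{\scst V}$ and $\sigma^{\ker}$, and then observe that $\Rep D(f_0)_{\scst O}$ embeds into this union as a sub-multiset in a way compatible with the identifications of Remark~\ref{remark:Df)}; restricting the union matching and composing with any remaining internal pairings gives $\sigmaO$. Concretely, for a diagonal element $((a,a),i)$, follow $\sigma^{\scst V}$: if it is matched to some $([0,a'),j)$ with $|a-a'|\le 2\veps$, and that copy is itself diagonal in $D(f_0')_{\scst O}$, set $\sigmaO((a,a),i)=((a',a'),j)$; otherwise route it through the $\ker$-matching data, incurring only another $2\veps$ of slack — but crucially \emph{on the $a$-coordinate only}, and diagonal-to-diagonal or diagonal-to-unmatched transitions keep the point within $2\veps$ of the diagonal, which is all Theorem~\ref{th-stability} demands for the $b$-coordinate there (since on the diagonal $b=a$, being within $2\veps$ of the diagonal after a $2\veps$ move in $a$ is automatic). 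Symmetric reasoning handles unmatched elements of $\Rep D(f_0')_{\scst O}$.

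I expect the main obstacle to be precisely this bookkeeping: showing that the two pre-existing matchings $\sigma^{\scst V}$ and $\sigma^{\ker}$ can be \emph{combined coherently} so that their overlap (the copies of $\B(V)$ that also appear, shifted, inside $\B(\ker(f))$) cancels consistently on both sides, rather than producing double-counting or violating the multiplicity constraints of a partial matching. One may need to slightly modify $\sigma^{\scst V}$ — rechoosing which copy of $[0,a')$ a given $[0,a)$ maps to, which is harmless since all copies of a fixed interval are interchangeable — to make it agree with $\sigma^{\ker}$ on the shared part. Once that alignment is set up, all the distance estimates are immediate from the two $2\veps$-bounds and the triangle inequality, so the quantitative content is entirely inherited; the proof is really a combinatorial lemma about splicing partial matchings. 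I would state and prove that splicing lemma first, then deduce Proposition~\ref{prop:rep-DfO} in one line.
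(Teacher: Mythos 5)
Your reduction of $D(f_0)_{\scst O}$ to the pair $(\B(\ker(f)),\B(V))$ is correct as a description of the multiset, but the splicing step you defer is not mere bookkeeping --- it is where the argument breaks, and the quantitative claim you make there is wrong. The diagonal points $(a,a)$ are invisible to $\ker(f)$, since $\ker(\kk_a\to\kk_a)=0$, so you must route them through a bottleneck matching $\sigma^{\scst V}$ of $\B(V)$ and then reconcile the result with $\sigma^{\ker}$. In the case you describe as ``automatic'', a diagonal element $((a,a),i)$ is sent by $\sigma^{\scst V}$ to a copy of $[0,a')$ with $|a-a'|\le 2\veps$ whose corresponding point of $D(f_0')_{\scst O}$ is an off-diagonal point $(a',b')$ left unmatched by $\sigma^{\ker}$; the only control you then have on $b'$ is $a'-b'<2\veps$ (the unmatched-bar condition), so the triangle inequality gives $|a-b'|<4\veps$, not the required $2\veps$. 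As sketched, your construction therefore yields at best a $4\veps$-matching on the diagonal part, and the ``splicing lemma'' that would have to repair this is neither stated nor proved.

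The paper avoids the diagonal/off-diagonal case split entirely by a different device: it replaces $X$ by a metric space $Y^\delta$ in which all pairwise distances are inflated by $\delta>2\veps$, obtaining a morphism $f^\delta=f\circ h$ with $\cM^0_{f^\delta}(a+\delta,b)=\cM^0_f(a,b)$. Now $\ker(\kk_{a+\delta}\to\kk_b)=\kk_{[b,a+\delta)}$ is nonzero even when $a=b$, so \emph{every} point of $D(f_0)_{\scst O}$ --- diagonal ones included --- appears as a bar of $\B(\ker(f^\delta))$ of length at least $\delta>2\veps$ (Lemma~\ref{lem:Bkerfdelta} and Proposition~\ref{prop:injection-DfO-ker}). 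A single application of kernel stability ($\dI(f^\delta,\fdp)\le 2\veps$, hence the bottleneck distance between the kernels is at most $2\veps$) produces a matching that is forced to be a bijection on these long bars; transporting it back through the injections of Proposition~\ref{prop:injection-DfO-ker} gives the $2\veps$-matching directly, with the unmatched points corresponding exactly to $[0,\delta)$ bars and hence satisfying $a,b\le 2\veps$. If you want to salvage your route, you would need to prove that $\sigma^{\scst V}$ and $\sigma^{\ker}$ can be chosen compatibly so that every diagonal point lands on a point with $|a-b'|\le 2\veps$; the $\delta$-shift is precisely a clean way of forcing that compatibility without any case analysis.
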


Now, we are ready to prove Theorem~\ref{th-stability}.

\begin{proof}[of Theorem~\ref{th-stability}]
    We consider a partial matching $\sigma^{\scst D(f_0)}\colon D(f_0)\nrightarrow D(f_0')$ that is the combination of $\sigmainf$ and $\sigmaO$ produced in Propositions~\ref{prop:rep-Dfinfty} and~\ref{prop:rep-DfO}. 
    Since $D(f_0)$ is the disjoint union $D(f_0) = D(f_0)_{\scst O}\sqcup D(f_0)_\infty$ then, by construction of $\sigmainf$ and $\sigmaO$, the  partial matching
    $\sigma^{\scst D(f_0)}$ is a $2\veps$-matching,    
    concluding the proof.
\end{proof}

The rest of this section is devoted to prove Proposition~\ref{prop:rep-DfO}. 
For this, we consider fixed finite metric spaces $Z$ and $Z'$ and subsets $X\subseteq Z$ and $X'\subseteq Z'$ such that $\veps = d_{\scst GH}((X,Z),(X',Z'))$ and persistence morphisms $f\colon V\to U$ and $f'\colon V'\to U'$ induced by the inclusions.
Recall the assignments $\alpha^{\scst Z}$ and $\alpha^{\scst Z'}$ from Proposition~\ref{prop:interleaving-im-ker-coker}. 
We pay close attention to their restrictions 
$
\alpha^{\scst X}=\alpha^{\scst Z}_{|\scst X}
\colon X\rightarrow X'
$ and $
\alpha^{\scst X'}=\alpha^{\scst Z'}_{|\scst X'}\colon X'\rightarrow X
$ and use them to define a subset $Y\subseteq X\times X'$ given by pairs $(x,x')\in X \times X'$ such that either $\alpha^{\scst X}(x)=x'$ or $\alpha^{\scst X'}(x')=x$. 
Fixed $\delta>0$, we denote by $Y^\delta$ the metric space with set $Y$ and metric $d^{\scst Y\delta}$ where, given $(x,x'), (y,y') \in Y^\delta
$ we have 
\[
d^{\scst Y\delta}((x,x'), (y,y')) = \begin{cases}
    0 & \mbox{ if } (x,x')=(y,y') \\
    \delta  + d^{\scst X}(x,y) & \mbox{ otherwise } 
\end{cases}
\]
We can also define $\Ydp$ with distance $d^{\scst Y\delta'}$ where, given $(x,x'), (y,y') \in Y
$, we have 
\[
d^{\scst Y\delta\, '}((x,x'), (y,y')) = \begin{cases}
    0 & \mbox{ if } (x,x')=(y,y') \\
    \delta  + d^{\scst X'}(x',y') & \mbox{ otherwise } 
\end{cases}
\]
There are non-expansive maps $
\pi\colon Y^\delta \rightarrow X$ and $\pi'\colon\Ydp\rightarrow X'$ given by 
$\pi((x,x'))=x$ and $\pi'((x,x'))=x'$ for all $(x,x') \in Y$. 
Now, we define the persistence modules $W= \PH_0(Y^\delta)$ and $W'= \PH_0(\Ydp)$ and consider the persistence morphisms $h\colon W\rightarrow V$ and $h'\colon W'\rightarrow V'$ induced by $\pi$ and $\pi'$, respectively. 
We write $f^\delta=f\circ h\colon W\to U$ and $\fdp=f'\circ h'\colon W'\to U'$.

\begin{proposition}~\label{prop:interleaving-f-delta}
    $\dI(f^\delta, \fdp) \leq 2\veps$ for any $\delta > 0$.
\end{proposition}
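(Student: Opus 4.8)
The plan is to mimic the structure of the proof of Proposition~\ref{prop:interleaving-f-fprime}, building an explicit $2\veps$-interleaving between $f^\delta$ and $\fdp$ out of the data already in hand. Recall that $f^\delta = f\circ h$ with $h\colon W\to V$ induced by $\pi\colon Y^\delta\to X$, and similarly $\fdp = f'\circ h'$ with $h'$ induced by $\pi'\colon \Ydp\to X'$. Since $W=\PH_0(Y^\delta)$ and $W'=\PH_0(\Ydp)$ share the same underlying set $Y$, the natural candidate for the interleaving maps on the domain side is the pair of graph morphisms induced by the set identity $\id_Y\colon Y\to Y$. First I would check that $\id_Y$ is a $2\veps$-morphism in both directions, i.e.\ that $\VR_r(Y^\delta)\subseteq \VR_{r+2\veps}(\Ydp)$ and $\VR_r(\Ydp)\subseteq \VR_{r+2\veps}(Y^\delta)$. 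For an edge $[(x,x'),(y,y')]\in\VR_r(Y^\delta)$ we have $\delta + d^{\scst X}(x,y)\le r$, and I need $\delta + d^{\scst X'}(x',y')\le r+2\veps$; this should follow because, by construction of $Y$, each of $x'$ and $y'$ is within $\veps$ (measured in the common metric space $M$ via the isometries $\gammaZ,\gammaZp$) of $x$ and $y$ respectively, exactly as in the triangle-inequality estimate in the proof of Proposition~\ref{prop:interleaving-f-fprime}. Hence $d^{\scst X'}(x',y')\le d^{\scst X}(x,y)+2\veps$, giving the needed inclusion; the reverse inclusion is symmetric. These inclusions induce persistence morphisms $\theta\colon W\to W'(2\veps)$ and $\theta'\colon W'\to W(2\veps)$ with $\theta'_{2\veps}\circ\theta = \rho^{\scst W}_{4\veps}$ and $\theta_{2\veps}\circ\theta' = \rho^{\scst W'}_{4\veps}$ (both maps being induced by $\id_Y$, their composite is just a structure map).

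Next I would produce the codomain-side interleaving maps. For the codomains $U=\PH_0(Z)$ and $U'=\PH_0(Z')$ I reuse verbatim the assignments $\psi^{\scst Z}\colon U\to U'(2\veps)$ and $\psi^{\scst Z'}\colon U'\to U(2\veps)$ from the proof of Proposition~\ref{prop:interleaving-f-fprime}, which already satisfy $\psi^{\scst Z'}_{2\veps}\circ\psi^{\scst Z}=\rho^{\scst U}_{4\veps}$ and $\psi^{\scst Z}_{2\veps}\circ\psi^{\scst Z'}=\rho^{\scst U'}_{4\veps}$, and which satisfy $\psi^{\scst Z}\circ f = f'_{2\veps}\circ\phi^{\scst X}$ with $\phi^{\scst X}$ induced by $\alpha^{\scst X}$. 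The remaining point is the commutativity relation $\psi^{\scst Z}\circ f^\delta = \fdp_{2\veps}\circ\theta$ (and its primed counterpart). Unwinding, $f^\delta = f\circ h$ is induced at the level of graphs by the composite $Y^\delta\xrightarrow{\pi} X\subseteq Z$, i.e.\ by the set map $(x,x')\mapsto x$ followed by the inclusion; post-composing with $\alpha^{\scst Z}$ gives $(x,x')\mapsto \alpha^{\scst Z}(x)=\alpha^{\scst X}(x)$. On the other side, $\fdp_{2\veps}\circ\theta$ is induced by $Y^\delta\xrightarrow{\id} \Ydp\xrightarrow{\pi'} X'\subseteq Z'$, i.e.\ by $(x,x')\mapsto x'$. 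So the required equality of the two induced maps $W\to U'(2\veps)$ reduces to: the set maps $(x,x')\mapsto \alpha^{\scst X}(x)$ and $(x,x')\mapsto x'$ from $Y$ to $Z'$ induce the same morphism on $\PH_0$. They need not be equal pointwise — they differ exactly on pairs $(x,x')\in Y$ with $\alpha^{\scst X}(x)\ne x'$, which by definition of $Y$ means $\alpha^{\scst X'}(x')=x$ — but I would argue they agree after passing to connected components of $\VR_{r+2\veps}(Z')$ because for every such pair, $x'$ and $\alpha^{\scst X}(x)$ both lie within $2\veps$ of each other in $Z'$ (via the $M$-estimates: $d^{\scst Z'}(x',\alpha^{\scst X}(x))\le d^{\scst M}(\gammaZp(x'),\gammaZ(x)) + d^{\scst M}(\gammaZ(x),\gammaZp(\alpha^{\scst X}(x)))\le \veps+\veps$), hence are connected by an edge in $\VR_{r+2\veps}(Z')$ and land in the same component. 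This is the analogue of the standard argument that an $\veps$-shifted choice function induces a well-defined interleaving morphism.

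The step I expect to be the main obstacle is precisely this last commutativity check on the domain/codomain square, because it involves reconciling the identity map on $Y$ (used for $\theta$) with the two genuinely different projections $\pi,\pi'$ and the assignment $\alpha^{\scst X}$, and one must be careful that the relevant edges really do appear at filtration value $r+2\veps$ and not some larger shift; getting the bookkeeping of which points are $\veps$-close versus $2\veps$-close right is where the argument could go wrong. Once both commutativity relations $\psi^{\scst Z}\circ f^\delta = \fdp_{2\veps}\circ\theta$ and $\psi^{\scst Z'}\circ \fdp = f^\delta_{2\veps}\circ\theta'$ are established, together with the four identities making $\theta,\theta'$ and $\psi^{\scst Z},\psi^{\scst Z'}$ into $2\veps$-interleavings of $W$ with $W'$ and $U$ with $U'$, the definition of interleaving distance between persistence morphisms from Subsection~\ref{subsec:proof-stability-ker-im-coker} is satisfied, and we conclude $\dI(f^\delta,\fdp)\le 2\veps$ for every $\delta>0$, as claimed. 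Note that $\delta$ plays no role in the estimate: it only perturbs the intra-$Y$ distances by a fixed additive constant that cancels when comparing $d^{\scst Y\delta}$ with $d^{\scst Y\delta\,'}$, so the whole argument is uniform in $\delta>0$.
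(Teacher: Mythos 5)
Your proposal is correct and follows essentially the same route as the paper's proof: the interleaving maps on $W,W'$ are induced by the identity on $Y$ with the same triangle-inequality estimate in $M$, the codomain maps $\psi^{\scst Z},\psi^{\scst Z'}$ are reused from Proposition~\ref{prop:interleaving-f-fprime}, and the commutativity check reduces to the observation that $x'$ and $\alpha^{\scst X}(x)$ lie in the same connected component at scale $2\veps$. The only cosmetic difference is that the paper verifies this last point at the level of $V'=\PH_0(X')$ (establishing $\phi^{\scst X}\circ h=h'_{2\veps}\circ\phi^{\scst Y}$ first and then composing with $f'$), whereas you check the outer square directly in $Z'$; both are valid.
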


\begin{proof}
    Let us see that $f^{\delta}$ and $f^{\delta'}$ are $2\veps$-interleaved.
    First, we can define two morphisms of graphs, for all $r\geq 0$,  
    $
   \piYd\colon\VR_r(Y^\delta) \rightarrow \VR_{r+2\veps}(\Ydp)
   $ and $
   \piYdp\colon\VR_r(\Ydp) \rightarrow \VR_{r+2\veps}(Y^\delta)
   $ 
   by using  the identity on $Y$.
   To show that $\piYd$ is well-defined, consider different pairs $(x,x'), (y,y') \in Y$ such that $d^{X}(x,y)=r$, there is an inequality 
   
   \begin{multline*}
   \dYdp((x,x'),(y,y')) - \delta 
   = d^{\scst X'}(x',y')
   = \dM(\gammaZp(x'), \gammaZp(y'))
   \\ \;\;\;\;\;\leq 
   \dM(\gammaZp(x'), \gammaZ(x)) + \dM(\gammaZ(x), \gammaZ(y)) + \dM(\gammaZ(y), \gammaZp(y'))\leq 
   r + 2\veps\,. 
   \end{multline*}
   
   Similarly, $\piYdp$ is well-defined.
   In turn, these induce a pair of persistence morphisms $\phi^{\scst Y}\colon W \rightarrow W'(2\veps)$ and $
   \phi^{\scst Y'}\colon W' \rightarrow W(2\veps)$ such that 
   $$
   \phi^{\scst Y'}_{2\veps}\circ \phi^{\scst Y}= \rho^{\scst W}_{4\veps}
   \;\mbox{ and }\;
   \phi^{\scst Y}_{2\veps}\circ \phi^{\scst Y'} = \rho^{\scst W'}_{4\veps}
   .$$ 
    On the other hand, there is a  commutative relation 
   $
   \phi^{\scst X}\circ h=h'_{2\veps}\circ \phi^{\scst Y}
   $ which we need to prove. 
   First, notice that,
   for any $(x,x') \in Y$ we have that
   $[\alpha^{\scst X}(x), x']$ is an edge 
     in $\VR_{2\veps}(X')$ (or even $\alpha^{\scst X}(x) =x'$), and, in particular, given $r\geq 0$, the class $[(x,x')] \in W_r$ is such that there is an equality in $V'_{r+2\veps}$:
   \[
   \phi^{\scst X}_{r+2\veps} \circ h_r([(x,x')]) =
   [\alpha^{\scst X}(x)] = 
   [x'] = 
   (h'_{2\veps})_r\circ \phi^{\scst Y}_r([(x,x')])\,.
   \]
   As $\big\{ [(x,x')] \ | \ (x,x') \in Y\big\}$ is a set of generators for $W_r$, we  conclude $(\phi^{\scst X} \circ h)_r = (h'_{2\veps}\circ \phi^{\scst Y})_r$. 
   By the same principle,  
   $\phi^{\scst X'}\circ h'=h_{2\veps}\circ \phi^{\scst Y'}$. 
   Using these relations, and since, by Proposition~\ref{prop:interleaving-f-fprime}, we have 
   $
   f'_{2\veps} \circ \phi^{\scst X} = \psi^{\scst Z} \circ f$ and $f_{2\veps} \circ \phi^{\scst X'} = \psi^{\scst Z'} \circ f'
   $ we can see that: 
   \[
   \fdp_{2\veps} \circ \phi^{\scst Y} = 
   f'_{2\veps} \circ h'_{2\veps} \circ \phi^{\scst Y} = 
   f'_{2\veps} \circ \phi^{\scst X} \circ h =
   \psi^Z \circ f \circ h 
   =
   \psi^{\scst Z} \circ f^\delta
   \]
   as well as 
   \[
   f^\delta_{2\veps} \circ \phi^{\scst Y'} = 
   f_{2\veps} \circ h_{2\veps} \circ \phi^{\scst Y'} = 
   f_{2\veps} \circ \phi^{\scst X'} \circ h' = 
   \psi^{Z'} \circ f' \circ h'
   =
   \psi^{\scst Z'} \circ f^{\delta'}\,.
   \]
   This completes the proof since we know by 
   Proposition~\ref{prop:interleaving-f-fprime}
   that 
   $\psi^{\scst Z'}_{2\veps} \circ \psi^{\scst Z} = \rho^{\scst U}_{4\veps}$ and also $\psi^{\scst Z}_{2\veps} \circ \psi^{\scst Z'} = \rho^{\scst U'}_{4\veps}$.
\end{proof}

The following result shows that  all intervals from $\B(\ker(f^\delta))$ have length greater or equal to $\delta$.

\begin{lemma}\label{lem:Bkerfdelta}
All intervals from $\B(\ker(f^\delta))$ are of the form $[b,a+\delta)$ with $0<b\leq a$ or  $[0,\delta)$. 
Besides, an interval $[b,a+\delta)\in \B(\ker(f^\delta))$ with $0<b\leq a$ has multiplicity 
$m^{f\delta}([b,a+\delta)) = \cM_f^0(a,b)$  and 
$m^{f\delta}([0,\delta))=\dim(\ker(h_{0}))$. 
\end{lemma}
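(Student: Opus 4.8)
The plan is to peel off the part of $W$ coming from $\ker(h_0)$ and reduce the rest to the injective‑map situation already treated in Proposition~\ref{prop:injective-map} via the auxiliary space $X^\delta$.

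First I would examine $\VR(\Yd)$. Since $\dYd$ adds $\delta$ to every nonzero distance, $\VR_r(\Yd)$ is totally disconnected for $r<\delta$, while for $r\ge\delta$ the pairs $(x,x')$ and $(y,y')$ lie in the same connected component of $\VR_r(\Yd)$ precisely when $x$ and $y$ lie in the same component of $\VR_{r-\delta}(X)$. Hence $\pi\colon\Yd\to X$ factors as $\Yd\xrightarrow{q}X^\delta\xrightarrow{\iota}X$ with $q((x,x'))=x$ and $\iota$ the identity on the underlying set, both non-expansive; moreover $\iota$ increases every distance by exactly $\delta$, so the composite $X^\delta\xrightarrow{\iota}X\subseteq Z$ is a non-expansive \emph{injection} for every $\delta>0$ (using that $X\subseteq Z$ is an embedding). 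Write $p=\PH_0(q)$ and $g=\PH_0(\iota)$, so $h=g\circ p$. The component description shows $p_r$ is an isomorphism for $r\ge\delta$, whereas for $r<\delta$ it is the fixed surjection $\langle Y\rangle\to\langle X\rangle$, $(x,x')\mapsto x$, which is exactly $h_0$; since the structure maps of $W$ are identities on $[0,\delta)$, this gives $\ker(p)\simeq\kk_{[0,\delta)}^{\oplus\dim\ker(h_0)}$ and a short exact sequence $0\to\ker(p)\to W\xrightarrow{p}\PH_0(X^\delta)\to 0$ which a single linear section of $h_0$, reused on all of $[0,\delta)$, splits. Therefore $W\simeq\kk_{[0,\delta)}^{\oplus\dim\ker(h_0)}\oplus\PH_0(X^\delta)$, and under this identification $h$ is the zero morphism on the $\kk_{[0,\delta)}$-summands and equals $g\colon\PH_0(X^\delta)\to V$ on the remaining summand.

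Next I would push this through $f$. From the previous step $\fd=f\circ h\simeq\big(\bigoplus^{\dim\ker(h_0)}(\kk_{[0,\delta)}\to 0)\big)\oplus(f\circ g)$, so $\ker(\fd)\simeq\kk_{[0,\delta)}^{\oplus\dim\ker(h_0)}\oplus\ker(f\circ g)$; the first summand already produces the interval $[0,\delta)$ with multiplicity $\dim\ker(h_0)$. For $\ker(f\circ g)$, the morphism $f\circ g\colon\PH_0(X^\delta)\to U$ is induced by the non-expansive injection $X^\delta\xrightarrow{\iota}X\subseteq Z$, so Proposition~\ref{prop:isomorph-im-ker-coker} gives $\ker(f\circ g)\simeq\bigoplus_{b>0}\bigoplus_{A>b}\kk_{[b,A)}^{\oplus\cM^0_{f\circ g}(A,b)}$. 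Now $\B(\PH_0(X^\delta))$ consists only of intervals $[0,c+\delta)$ with $[0,c)\in\B(V)$ together with $[0,\infty)$ (because $\VR_r(X^\delta)$ is discrete for all $r\le\delta$), hence $\cM^0_{f\circ g}(A,b)\ne 0$ forces $A>\delta$; and the identity $\cM^0_f(a,b)=\cM^0_{f\circ g}(a+\delta,b)$ for $a,b>0$, established inside the proof of Proposition~\ref{prop:injective-map} from $\ker^\pm_a(V)=\ker^\pm_{a+\delta}(\PH_0(X^\delta))$ and $(f\circ g)_0=f_0$ in Formula~(\ref{eq:def-Mf}), lets me rewrite (using also $\cM^0_f(a,b)=0$ for $a<b$) $\ker(f\circ g)\simeq\bigoplus_{b>0}\bigoplus_{a\ge b}\kk_{[b,a+\delta)}^{\oplus\cM^0_f(a,b)}$. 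Combining, $\ker(\fd)\simeq\kk_{[0,\delta)}^{\oplus\dim\ker(h_0)}\oplus\bigoplus_{0<b\le a}\kk_{[b,a+\delta)}^{\oplus\cM^0_f(a,b)}$, and reading off the barcode — the intervals $[b,a+\delta)$ with $a\ge b>0$ and $[0,\delta)$ are pairwise distinct, so multiplicities do not merge — yields exactly the stated form, with $m^{f\delta}([b,a+\delta))=\cM^0_f(a,b)$ and $m^{f\delta}([0,\delta))=\dim(\ker(h_0))$.

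The main obstacle is the structural claim of the second paragraph: showing that $W$ splits as $\kk_{[0,\delta)}^{\oplus\dim\ker(h_0)}\oplus\PH_0(X^\delta)$ \emph{compatibly} with $h$, i.e.\ that the extension is trivial and that $h$ annihilates precisely the $\ker(h_0)$-part. Everything after that is bookkeeping built on Propositions~\ref{prop:injective-map} and~\ref{prop:isomorph-im-ker-coker}. A minor point to keep in mind is the degenerate case $b=a$: the interval $[a,a+\delta)$ has genuine length $\delta>0$ and multiplicity $\cM^0_f(a,a)$, which need not vanish, so it does occur in $\B(\ker(\fd))$ even though $\ker(f)$ contains no interval of the form $[a,a)$.
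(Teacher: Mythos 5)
Your proof is correct, and it reaches the lemma by a genuinely different organization than the paper's. The paper applies Remark~\ref{rem:ulrike} directly to $\fd$, identifies the $(\kk_{a}\to 0)$--summands as $\dim(\ker(h_0))$ copies of $\kk_{\delta}\to 0$ using $\ker(\fd_0)=\ker(h_0)$, and then computes $h_0(\ker^{\pm}_b(W))$ explicitly to extract the shift $\cM^0_{\fd}(a,b)=\cM^0_f(a-\delta,b)$ straight from Formula~(\ref{eq:def-Mf}), after which the kernel is read off from the resulting ladder decomposition. You instead split $W$ itself as $\kk_{[0,\delta)}^{\oplus\dim\ker(h_0)}\oplus\PH_0(X^\delta)$ compatibly with $h$, and reduce to the non-expansive injection $X^\delta\to X\subseteq Z$ so that Propositions~\ref{prop:decomposition-f} and~\ref{prop:isomorph-im-ker-coker}, together with the shift identity already established inside the proof of Proposition~\ref{prop:injective-map}, finish the bookkeeping. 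Both arguments rest on the same two computations (everything in $W$ below level $\delta$ is discrete, and above $\delta$ the filtration is a $\delta$-shifted copy of that of $X$), but yours buys the reuse of the injective-case machinery at the price of the one genuinely delicate step, which you correctly isolate: that the extension $0\to\ker(p)\to W\to\PH_0(X^\delta)\to 0$ splits by a persistence morphism compatible with $h$. That step does go through --- a fixed linear section of $h_0$ works on $[0,\delta)$ because the structure maps there are identities, $p_r^{-1}$ works for $r\geq\delta$, and the two glue by naturality of $p$ since $p_{r'}$ is injective for $r'\geq\delta$ --- so there is no gap. Your closing remark about the case $b=a$ also agrees with the paper: those pairs contribute honest intervals $[a,a+\delta)$ to $\B(\ker(\fd))$ even though they leave no trace in $\B(\ker(f))$.
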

\begin{proof}
First, by definition of $W$, it follows that $\ker^{\pm}_b(W) = 0$ for $b < \delta$, while 
$\ker^+_{\delta}(W) = \ker(h_0)$.
In addition, since $f^\delta_0 = f_0 \circ h_0$ and $\ker(f_0)=0$, it follows that $\ker(f^\delta_0)=\ker(h_0)$. Using this observation, together with Remark~\ref{rem:ulrike} and commutativity of $\cM^0_{f^\delta}$ with direct sums, there is an isomorphism
\[\begin{array}{rr}
f^\delta \simeq  &
\mbox{$
\bigg(
\bigoplus_{b > 0} 
\bigoplus_{a\geq b}
\bigoplus_{s\in \llbracket \cM^0_{f^\delta}(a,b)\rrbracket} (\kk_{a} 
\rightarrow \kk_{b})\bigg) 
\oplus 
\bigg(\bigoplus_{b>0} 
\bigoplus_{s\in \llbracket d^-_b\rrbracket}
(0 \rightarrow \kk_{b})\bigg)
$}
\\
&\mbox{$
\oplus 
\bigg(
\bigoplus_{s\in \llbracket d^+_\delta\rrbracket}
(\kk_{\delta} \rightarrow 0)\bigg) \oplus \big(\kk_{\infty}\rightarrow \kk_{\infty}\big)$}.
\end{array}
\]
for  $d^+_\delta = \dim(\ker(h_0))$ and some integers $d^-_b\geq 0$ for all $b>0$. 

Now, we observe that by definition of $W$:  
\[
h_0(\ker^{\pm}_b(W)) = \begin{cases}
    0 & \mbox{ for } b \leq \delta\,, \\
    \ker^{\pm}_{b-\delta}(V) & \mbox{ for } \delta < b\,.
\end{cases}
\]
After these observations, using the partial matching formula we have 
\[
\cM^0_{f^\delta}(a, b) = \begin{cases}
    0 & \mbox{ for } a \leq \delta\,, \\
    \cM_f^0(a-\delta,b) & \mbox{ for } \delta < a\,.
\end{cases}
\]
In particular, this also implies that $N_{f^\delta}(b)=N_f(b)$ for all $b>0$.
Now, using our decomposition for $f^\delta$, there is an isomorphism 
\[
\ker(f^\delta) \simeq 
\bigg( \bigoplus_{b > 0} 
\bigoplus_{a\geq b}
\bigoplus_{s\in \llbracket \cM^0_f(a,b)\rrbracket} 
\kk_{[b, a+\delta)}
\bigg) \oplus 
\bigg(
\bigoplus_{s\in \llbracket d^+_\delta\rrbracket}
\kk_{\delta}
\bigg)
\] 
and so $\B(\ker(f^\delta))$ is the multiset $(S^{f\delta}, m^{f\delta})$ given by 
\[
S^{f\delta}
= \{ [b,a+\delta) \mbox{ such that } \cM_f^0(a,b)\neq 0\} \cup \{ [0,\delta)\} 
\] 
and with multiplicities 
$m^{f\delta}([b,a+\delta)) = \cM_f^0(a,b)$ for $b>0$ and 
$m^{f\delta}([0,\delta))=\dim(\ker(h_{0}))$. 
\end{proof}

The following result is why we are interested in $f^\delta$ and is the last ingredient we need to prove Proposition~\ref{prop:rep-DfO}.

\begin{proposition}\label{prop:injection-DfO-ker}
There exists an injection 
\[
\sigma\colon 
\Rep D(f_0)_{\scst O} \hookrightarrow \Rep \B(\ker(f^\delta)) 
\]
such that $((a,b), i) \in D(f_0)_{\scst O}$ is sent to $([b,a+\delta),j) \in \Rep \B(\ker(f^\delta))$ and such that given $([c,d), l) \in \B(\ker(f^\delta)) \setminus \im \sigma$ then $c=0$ and $d=\delta$. 
\\
There is also an analogous  injection $\sigma'\colon\Rep D(f_0')_{\scst O} \hookrightarrow \Rep \B(\ker(\fdp))$.
\end{proposition}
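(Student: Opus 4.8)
The plan is to make both sides of the asserted injection fully explicit and then exhibit $\sigma$ as the evident index-preserving bijection onto its image; Lemma~\ref{lem:Bkerfdelta} already carries essentially all of the content. First I would recall that $D(f_0)_{\scst O}$ is the sub-multiset of $D(f_0)=(\Sf,\mf)$ supported on the pairs $(a,b)$ with $0<b\leq a<\infty$ and $\cM_f^0(a,b)\neq 0$, with multiplicity $\mf((a,b))=\cM_f^0(a,b)$. On the other side, Lemma~\ref{lem:Bkerfdelta} identifies $\B(\ker(f^\delta))=(S^{f\delta},m^{f\delta})$ with $S^{f\delta}=\{[b,a+\delta)\mid \cM_f^0(a,b)\neq 0\}\cup\{[0,\delta)\}$, where $m^{f\delta}([b,a+\delta))=\cM_f^0(a,b)$ for $b>0$ and $m^{f\delta}([0,\delta))=\dim(\ker(h_0))$.

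With these two descriptions in hand I would define $\sigma$ on representatives by $\sigma(((a,b),i))=([b,a+\delta),i)$. To see it is well defined I would check that whenever $(a,b)$ lies in the support of $D(f_0)_{\scst O}$ we have $[b,a+\delta)\in S^{f\delta}$ and, crucially, $m^{f\delta}([b,a+\delta))=\cM_f^0(a,b)=\mf((a,b))$, so that the index $i\in\llbracket\mf((a,b))\rrbracket$ is a legitimate index for the target interval. Injectivity then reduces to the observation that, for the fixed $\delta>0$, the assignment $(a,b)\mapsto[b,a+\delta)$ is injective (the left endpoint is $b$, and the right endpoint together with the fixed $\delta$ recovers $a$), so $\sigma$ is injective since it also preserves the index $i$.

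It remains to describe $\Rep\B(\ker(f^\delta))\setminus\im\sigma$. By construction $\im\sigma$ contains every representative $([b,a+\delta),i)$ with $\cM_f^0(a,b)\neq 0$ and $i\in\llbracket m^{f\delta}([b,a+\delta))\rrbracket$, that is, all copies of every interval of $S^{f\delta}$ of the form $[b,a+\delta)$. Since every point of the matching diagram $D(f_0)$ has positive second coordinate, none of these intervals equals $[0,\delta)$; hence the only interval of $S^{f\delta}$ not met by $\im\sigma$ is $[0,\delta)$, and therefore any $([c,d),l)\in\Rep\B(\ker(f^\delta))\setminus\im\sigma$ has $c=0$ and $d=\delta$, as claimed. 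The analogous injection $\sigma'$ is obtained verbatim by running the same argument with $f'$, $h'$, $X'\subseteq Z'$ and $\fdp$ in place of $f$, $h$, $X\subseteq Z$ and $f^\delta$.

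I do not expect a genuine obstacle here: Lemma~\ref{lem:Bkerfdelta} has already pinned down $\B(\ker(f^\delta))$ in terms of $\cM_f^0$, so what remains is bookkeeping. The two points that need a little care are matching up multiplicities (that the number $\cM_f^0(a,b)$ plays simultaneously the role of $\mf((a,b))$ and of $m^{f\delta}([b,a+\delta))$) and checking that the exceptional interval $[0,\delta)$ really is disjoint from the image, which is exactly where the normalization $b>0$ for diagram points is used.
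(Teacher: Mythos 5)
Your proposal is correct and follows essentially the same route as the paper, which likewise derives the injection directly from Lemma~\ref{lem:Bkerfdelta} together with the description of $D(f_0)_{\scst O}$ as the multiset of pairs $(a,b)$ with multiplicity $\cM_f^0(a,b)$; you have simply written out the bookkeeping (the explicit map $((a,b),i)\mapsto([b,a+\delta),i)$, the multiplicity match, and the identification of the complement of the image with the copies of $[0,\delta)$) that the paper leaves implicit.
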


\begin{proof}
The result follows from Lemma~\ref{lem:Bkerfdelta}, recalling that $D(f_0)_{\scst O}$ is the multiset given by pairs $(a,b)\in \R\times \R$ with multiplicities $m^{\scst D(f_0)}=\cM_f(a,b)$.
\end{proof}

Finally, we are ready to provide the proof of Proposition~\ref{prop:rep-DfO}.

\begin{proof}[of Proposition~\ref{prop:rep-DfO}]
First, we fix $\delta > 2\veps$. 
By Proposition~\ref{prop:interleaving-f-delta},  $\dI(f^\delta,\fdp)\leq 2\veps$.
In particular, by Proposition~\ref{prop:interleaving-im-ker-coker} we also have that
$\dI(\ker(f^\delta)$, $\ker(\fdp))\leq 2\veps$ 
which in turn, by Proposition~\ref{prop:isometry}, implies that $\dB(\B(\ker(f^\delta))$, $\B(\ker(\fdp)) \leq 2\veps$. 
Thus, there exists a $2\veps$-matching $\sigma^{\ker}\colon\Rep \B(\ker(f^\delta))$ $\nrightarrow \B(\ker(\fdp))$. 
Besides, since all elements $([a,b), i) \in \Rep \B(\ker(f^\delta))$ are such that $|a-b|\geq \delta > 2\veps$ and all $([a',b'),j) \in \Rep \B(\ker(\fdp))$ are such that $|a'-b'|\geq \delta > 2\veps$, it follows that $\sigma^{\ker}$ must be a bijection sending $([a,b),i) \in \Rep \B(\ker(f^\delta))$ to $([a',b'),j) \in \Rep \B(\ker(\fdp))$ with $|a-a'| \leq 2\veps$ and $|b-b'| \leq 2\veps$. 
Now, the claimed partial matching $\sigmaO$ follows by the commutative square, where $\sigma,\sigma'$ are from applying Proposition~\ref{prop:injection-DfO-ker} to $f^\delta$ and $\fdp$
\[
\begin{tikzcd}
    \Rep D(f_0)_{\scst O} \ar[r, "\sigma", hookrightarrow] \ar[d, "\sigmaO", near start] \ar[d, phantom, "/" marking] & 
    \Rep \B(\ker(f^\delta)) \ar[d, "\sigma^{\ker}", "\simeq"'] 
    \\
    \Rep D(f_0')_{\scst O} \ar[r, "\sigma'", hookrightarrow] & \Rep \B(\ker(\fdp))\,.
\end{tikzcd}
\]
That is, $\sigmaO$ is such that:
\begin{itemize}
\item If $((a,b), i) \in \Rep D(f_0)_{\scst O}\setminus \coim \sigmaO$, then $\sigmaO((a,b), i) = ([b,a+\delta), j)$ and $\sigma^{\ker}([b,a+\delta), i') = ([0,\delta), k)$, which implies that $a = |(a+\delta) - \delta| \leq 2\veps$.
\item For $((a,b), i) \in \coim \sigmaO$, then $\sigmaO((a,b), i) = ((a',b'),j) \in \Rep D(f_0')_{\scst O}$ such that $\sigma ((a,b), i) = ([b, a+\delta), i')$ and $\sigma' ((a',b'), j) = ([b', a'+\delta), j')$ with $\sigma^{\ker} ([b, a+\delta), i') = ([b', a'+\delta), j')$. This implies $|b-b'|\leq 2\veps$ and $|a-a'| = |(a+\delta)-(a'+\delta)| \leq 2\veps$.
\item If $((a',b'), j) \in \Rep D(f_0')_{\scst O}\setminus \im \sigmaO$, then $\sigma((a',b'), j) = ([b',a'+\delta), j')$ and $\sigma^{ker}(([0,\delta), k))= ([b',a'+\delta), j')$, which implies that $a' = |(a'+\delta) - \delta| \leq 2\veps$.
\end{itemize}
Therefore, the partial matching 
constructed fulfills all the requirements needed in the statement of Proposition~\ref{prop:rep-DfO}, concluding the proof.
\end{proof}


\subsection{Proof of Theorem~\ref{theorem:stability-general}}\label{subsec:5.2}

    Essentially, the proof is analogous to that outlined in Subsection~\ref{subsec:th-proof}. 
    The proof is divided into three parts: 
    (1) we define two persistence morphisms $f^\delta$ and $f^{\delta'}$, 
    (2) we show that $d_I(f^\delta, f^{\delta'}) \leq \veps$ and 
    (3)  we construct a $\veps$-matching between $D(f_0)$ and $D(f_0')$. Let us proceed.
    \begin{enumerate}
        \item Using $\alpha^X$ and $\alpha^{X'}=(\alpha^{X})^{-1}$,
        we consider $Y^\delta$ and $Y^{\delta'}$ from Subsection~\ref{subsec:th-proof}. 
        In this case, $Y^\delta=(Y, d^{Y\delta})$ is composed of
        \[
            Y= \big\{(x,\alpha^X(x)) \ \big\vert \ x \in X\big\}
        \]
        together with the distance $d^{Y\delta}$ such that for $x_1,x_2\in X$ with $x_1\neq x_2$ we have $d^{Y\delta}((x_1, \alpha^X(x_1)), (x_2, \alpha^X(x_2)))=d^X(x_1, x_2) + \delta$.
        Now, for any $\delta>0$, there are non-expansive maps $\pi\colon Y^\delta \rightarrow X$ and $\pi'\colon Y^{\delta'} \rightarrow X'$. 
        As we are working with finite metric spaces, we take $\delta>0$ large enough such that $f_0 \circ \pi\colon Y^\delta \rightarrow Z$ and $f_0' \circ \pi'\colon Y^{\delta'} \rightarrow Z'$ are non-expansive. 
        In such case, these compositions induce persistence morphisms $f^\delta\colon W\rightarrow U$ and  $f^{\delta'}\colon W'\rightarrow U'$ where $W=\PH_0(Y^\delta)$ and $W'=\PH_0(Y^{\delta'})$.
        
        \item Next, we show that $d_I(f^\delta, f^{\delta'})\leq \veps$. 
        As in Proposition~\ref{prop:interleaving-f-delta}, we consider morphisms of graphs for all $r\geq 0$,  
        $
        \piYd\colon\VR_r(Y^\delta) \rightarrow \VR_{r+\veps}(\Ydp)
        $ and $
        \piYdp\colon\VR_r(\Ydp) \rightarrow \VR_{r+\veps}(Y^\delta)
        $ 
        by using the identity on $Y$.
        To show that $\piYd$ is well-defined, consider different pairs $(x,\alpha^X(x)), (y,\alpha^X(y)) \in Y^\delta$ such that $d^{X}(x,y) = r$. 
        Now,  using the inclusion $\alpha^X(\VR_r(X))\subseteq \VR_{r+\veps}(X')$, there is an inequality,
        \[
            \dYdp((x, \alpha^X(x)), (y, \alpha^X(y)) - \delta 
            = d^{\scst X'}(\alpha^X(x), \alpha^X(y))
            \leq d^{\scst X}(x, y) + \veps\,, 
        \]
        which implies $\alpha^{Y\delta}(\VR_{r+\delta}(Y^\delta))\subseteq \VR_{r+\delta+\veps}(Y^{\delta'})$.
        Similarly, $\piYdp$ is well-defined.
        In turn, these induce a pair of persistence morphisms $\phi^{\scst Y}\colon W \rightarrow W'(\veps)$ and 
        $
        \phi^{\scst Y'}\colon W' \rightarrow W(\veps)$ such that 
        $
        \phi^{\scst Y'}_{\veps}\circ \phi^{\scst Y}= \rho^{\scst W}_{2\veps}
        $ and $
        \phi^{\scst Y}_{\veps}\circ \phi^{\scst Y'} = \rho^{\scst W'}_{2\veps}$. 
        In addition, since $\alpha^{Y\delta}$ and $\alpha^{Y\delta'}$ are defined as the identity on $Y$, we deduce commutativity relations $\alpha^X \circ \pi = \pi' \circ \alpha^Y$ and $\pi \circ \alpha^{Y'} = \alpha^{X'} \circ \pi'$.

        By hypotheses, we already have a pair of assignments $\alpha^Z\colon Z\rightarrow Z'$ and $\alpha^{Z'}\colon Z'\rightarrow Z$ inducing a $\veps$-interleaving between $U$ and $U'$ via maps $\psi^Z\colon U\rightarrow U'(\veps)$ and $\psi^{Z'}\colon U'\rightarrow U(\veps)$. 
        Finally, by the commutativity $\alpha^Z \circ f_0 = f_0' \circ \alpha^X$ together with $\alpha^X \circ \pi = \pi' \circ \alpha^Y$, we obtain 
        \[
        \alpha^Z \circ (f_0 \circ \pi) = f_0' \circ \alpha^X \circ \pi = (f_0' \circ \pi') \circ \alpha^{Y}\,.
        \]
       which implies $\psi^Z \circ f^\delta=f^{\delta'} \circ \phi^Y$.
        Similarly, using $\alpha^{Z'} \circ f_0' = f_0 \circ \alpha^{X'}$, we obtain that $\psi^{Z'}\circ f^{\delta'} = f^{\delta} \circ \phi^{Y'}$.

        \item Now, since $\fbullet\colon X\rightarrow Z$ is an injection and $\pi\colon Y^\delta\rightarrow X$ is an isomorphism, we obtain 
        $\ker(f^\delta_0)= \ker(f_0 \circ \pi)=0$. 
        Thus, 
        since $f_0 \circ \pi\colon Y^\delta \rightarrow X$ is a non-expansive injection, using Proposition~\ref{prop:decomposition-f}, we obtain
        \[
        \begin{array}{rr}
        f^\delta \simeq  &
        \mbox{$
        \bigg(
        \bigoplus_{b > 0} 
        \bigoplus_{a\geq b}
        \bigoplus_{s\in \llbracket \cM^0_{f^\delta}(a,b)\rrbracket} (\kk_{a} 
        \rightarrow \kk_{b})\bigg) 
        $ \hspace{3cm}}
        \\
        &\mbox{$
        \oplus 
        \bigg(\bigoplus_{b>0} 
        \bigoplus_{s\in \llbracket N_{f^\delta}(b)\rrbracket}
        (0 \rightarrow \kk_{b})\bigg)
        \oplus \big(\kk_{\infty}\rightarrow \kk_{\infty}\big)$}\,.
        \end{array}
        \]
        Next, using that $\cM^0_f(a,b)=\cM^0_{f^\delta}(a+\delta,b)$ for all $a,b>0$, we have 
        \[
        \ker(f^\delta) \simeq 
        \bigoplus_{b > 0} 
        \bigoplus_{a > 0}
        \bigoplus_{s\in \llbracket \cM^0_{f}(a,b)\rrbracket} \kk_{[b, a+\delta)}\,.
        \]
        In addition, since $N_{f^\delta}(b) = N_f(b)$ for all $b>0$, there is an isomorphism 
        $
        \coker(f^\delta) \simeq 
        \bigoplus_{b>0} \bigoplus_{s\in \llbracket N_f(b)\rrbracket}\kk_{b}
        $.
        
        Now, we consider $D(f_0)=D(f_0)_O \sqcup D(f_0)_\infty$, where $D(f_0)_O$ consists of $(a,b)\in D(f_0)$ with both $a,b<\infty$ while $D(f_0)_\infty$ consists of points $(\infty, b)\in D(f_0)$. 
        In particular, notice that there is an isomorphism $D(f_0)_O \simeq \B(\ker(f^\delta))$ that is built sending $(a,b) \in D(f_0)_O$ to $[b,a+\delta)$ while there is another isomorphism $D(f_0)_\infty \simeq \B(\coker(f^\delta))$ defined sending $(\infty, b) \in D(f_0)_\infty$ to $[0,b)$ in $\B(\coker(f^\delta))$.
        Altogether, since $d_I(f^\delta,f^{\delta'})\leq\veps$, using Proposition~\ref{prop:interleaving-im-ker-coker} we obtain a pair of $\veps$-matchings $\B(\ker(f^\delta)) \nrightarrow \B(\ker(f^{\delta'})$ and 
        $\B(\coker(f^\delta))\nrightarrow \B(\coker(f^{\delta'}))$.
        Thus, we have obtained a $\veps$-matching $D(f_0)_\infty\nrightarrow D(f_0')_\infty$. 
        On the other hand, we take $\delta>0$ large enough so that 
        $a+\delta - b>\veps$ for all $a,b>0$ such that $\cM_f(a,b)\neq 0$.
        This implies that the $\veps$-matching $\B(\ker(f^\delta)) \nrightarrow \B(\ker(f^{\delta'})$ cannot pair points into the diagonal (from outside) and, in particular, it is a matching. 
        Thus, we obtain a $\veps$-matching $D(f_0)_O\rightarrow D(f_0')_O$, and the result follows.
    \end{enumerate}


\section{Conclusion and future works}
\label{sec:future}

In this paper, we 
provide theoretical results that confirm that the definition of 0-persistence matching diagram  $D(f_0)$ is well-founded.
In particular, we have demonstrated the stability of $D(f_0)$ for $f$ induced by an embedding of finite metric spaces, through algebraic properties.

There has been recent work on Gromov-Hausdorff distances between pairs~\cite{GomezChe2024}; a future research direction is to see whether our results hold for their alternative definition of Gromov-Hausdorff distance (and check if our alternative definition is equivalent to theirs).
In our future work, we will explore more scenarios where $X$ is not a subset of $Z$.
We also plan to relax the condition from Definition~\ref{def:veps-interleaving-sets}. 
Morevoier, as pointed out in Proposition~\ref{prop:isomorph-im-ker-coker}, $\cM_f^0$ has a close relation with $\ker(f)$, $\im(f)$ and $\coker(f)$. 
Now, in the examples studied in Section~\ref{sec:set-injections-matchings} there is no persistence morphism, and hence there are no images, kernels or cokernels. 
We plan to answer the following questions:
Can we consider such $\cM_f^0$ as a generalization of images, kernels or cokernels? What about cases when the persistence modules are more complex, i.e., they contain nested intervals?

Finally, in the future, we will also investigate topological features of dimensions higher than $0$ where we can find nested intervals and not all intervals start at $0$, as well as other type of filtrations. 

\section*{Acknowledgments}

This work received funding from the Horizon Europe
research and innovation program under grant agreement no. 101070028-2
\href{https://rexasi-pro.spindoxlabs.com/}{REXASI-PRO} ``REliable \& eXplainable Swarm Intelligence for People with Reduced mObility" (call HORIZON-CL4-2021-HUMAN-01-01), and from 
MCIN/AEI/10.13039/ 501100011033 and the European Union NextGenerationEU/PRTR, under the 
national project TED2021-129438B-I00 ``Computational Topology for energy saving and optimization of deep learning methods to achieve Green Artificial Intelligence solutions".

\section{Code avaliablity}

The source code for the examples and experiments described in this paper is available on the GitHub repository \url{https://github.com/Cimagroup/tdqual}
and the explanation of the algorithm developed can be found in \cite{Df:arxiv}.

\section*{ORCID}

\noindent Álvaro Torras-Casas - \url{https://orcid.org/0000-0002-5099-6294}

\noindent Rocío González-Díaz - \url{https://orcid.org/0000-0001-9937-0033}

\bibliographystyle{plain}

\bibliography{library}

\begin{thebibliography}{10}

\bibitem{GomezChe2024}
A.~{Ahumada Gómez} and M.~Che.
\newblock Gromov–hausdorff convergence of metric pairs and metric tuples.
\newblock {\em Differential Geometry and its Applications}, 94:102135, 2024.

\bibitem{misc_optical_recognition_of_handwritten_digits_80}
E.~Alpaydin and C.~Kaynak.
\newblock {Optical Recognition of Handwritten Digits}.
\newblock UCI Machine Learning Repository, 1998.

\bibitem{BauerLesnick2015}
U.~Bauer and M.~Lesnick.
\newblock Induced matchings and the algebraic stability of persistence barcodes.
\newblock {\em Journal of Computational Geometry}, 6:162--191, 3 2015.

\bibitem{carlsson}
G.~Carlsson and M.l Vejdemo-Johansson.
\newblock {\em Topological Data Analysis with Applications}.
\newblock Cambridge University Press, 2021.

\bibitem{oudot}
F.~Chazal, V.~de~Silva, M.~Glisse, and S.Y. Oudot.
\newblock {\em The Structure and Stability of Persistence Modules}.
\newblock Briefs in Mathematics. Springer, 2016.

\bibitem{kernels-images-cokernels}
D.~Cohen-Steiner, H.~Edelsbrunner, J.~Harer, and D.~Morozov.
\newblock {\em Persistent Homology for Kernels, Images, and Cokernels}.
\newblock Proc. of the 2009 Annual ACM-SIAM Symposium on Discrete Algorithms (SODA). SIAM Publications Library, 2009.

\bibitem{crawley-boevey-2015}
W.~Crawley-Boevey.
\newblock Decomposition of pointwise finite-dimensional persistence modules.
\newblock {\em Journal of Algebra and Its Applications}, 14(5), 2015.

\bibitem{edelsbrunner}
H.~Edelsbrunner and J.~Harer.
\newblock {\em Computational Topology: An Introduction}.
\newblock Applied Mathematics. American Mathematical Society, 2010.

\bibitem{matchings}
R.~Gonzalez-Diaz, M.~Soriano-Trigueros, and A.~Torras-Casas.
\newblock Partial matchings induced by morphisms between persistence modules.
\newblock {\em Computational Geometry}, 112:101985, 2023.

\bibitem{Jacquard2023}
E.~Jacquard, V.~Nanda, and U.~Tillmann.
\newblock The space of barcode bases for persistence modules.
\newblock {\em Journal of Applied and Computational Topology}, 7:1--30, 2023.

\bibitem{OudotBook2015}
S.Y. Oudot.
\newblock {\em Persistence Theory: From Quiver Representations to Data Analysis.}
\newblock Number 209 in Math. Surveys Monogr. AMS, 2015.

\bibitem{polterovich2020topological}
L.~Polterovich, D.~Rosen, E.~Shelukhin, and D.~Stojisavljevic.
\newblock {\em Topological Persistence in Geometry and Analysis}, volume~74 of {\em University Lecture Series}.
\newblock American Mathematical Society, 2020.

\bibitem{Df:arxiv}
A.~Torras-Casas, E.~Paluzo-Hidalgo, and R.~Gonzalez-Diaz.
\newblock Topological data quality via 0-dimensional persistence matching, 2024.

\end{thebibliography}

\end{document}